\newtheorem{definition}{Definition}
\newtheorem{lemma}{Lemma}
\newtheorem{theorem}{Theorem}
\newtheorem{proposition}{Proposition}
\newtheorem{remark}{Remark}
\newcommand{\R}{\mathbb R}
\numberwithin{equation}{section}
\numberwithin{equation}{section}
\numberwithin{lemma}{section}
\numberwithin{theorem}{section}
\numberwithin{proposition}{section}
\title[Small sphere limit of quasi-local energy]{Small sphere limit of the quasi-local energy with anti de-Sitter space reference}
\author{Po-Ning Chen}
\date{}
\begin{document}
\thanks{The author is supported by NSF grant DMS-1308164. The author would like to thank Mu-Tao Wang and Shing-Tung Yau for helpful discussion.} 

\maketitle
\begin{abstract}
In \cite{Chen-Wang-Yau3}, a new quasi-local energy is introduced for spacetimes with a non-zero cosmological constant. In this article, we study the small sphere limit of this newly defined quasi-local energy for spacetimes with a negative cosmological constant. For such spacetimes, the anti de-Sitter space is used as the reference for the quasi-local energy. Given a point $p$ in a spacetime $N$, we consider a canonical family of surfaces approaching $p$ along its future null cone and evaluate the limit of the quasi-local energy. The optimal embedding equation which identifies the critical points of the quasi-local energy is solved in order to evaluate the limit. Using the optimal embedding, we show that the limit recovers the stress-energy tensor of the matter field at $p$. For vacuum spacetimes, the quasi-local energy vanishes to a higher order. In this case, the limit of the quasi-local energy is related to the Bel--Robinson tensor at $p$.
\end{abstract}

\section{Introduction}
In general relativity, a spacetime is a  4-manifold $N$ with a Lorentzian metric $g_{\alpha\beta}$ satisfying the {\it Einstein equation}
\[R_{\alpha\beta}-\frac{R}{2}g_{\alpha\beta}+ \Lambda g_{\alpha\beta} =8\pi T_{\alpha\beta},\]
where $R_{\alpha\beta}$ and $R$ are the Ricci curvature and the scalar curvature of the metric $g_{\alpha\beta}$, respectively. The constant $ \Lambda $ is called the cosmological constant.
On the right hand side of the Einstein equation, $T_{\alpha\beta}$ is the stress-energy tensor of the matter field. For a vacuum spacetime where $T_{\alpha\beta}=0$ (which implies $R_{\alpha \beta} =\Lambda g_{\alpha \beta}$), the gravitational energy is typically measured by the {\it Bel--Robinson tensor} \cite{Bel}
\[ Q_{\mu\nu \alpha \beta} = W^{\rho \,\,\,\, \sigma}_{\,\,\,\, \mu \,\,\,\, \alpha}W_{\rho \nu \sigma \beta}+W^{\rho \,\,\,\, \sigma}_{\,\,\,\, \mu \,\,\,\, \beta}W_{\rho \nu \sigma \alpha} - \frac{1}{2}g_{\mu\nu}W_{\alpha}^{\,\,\,\,  \rho \sigma \tau}W_{\beta  \rho \sigma \tau},  \]
where $W_{\alpha \beta \gamma \delta}$ is the Weyl curvature tensor of the spacetime $N$. The stress-energy tensor and the Bel--Robinson tensor are useful in studying the global structure of the maximal development of the initial value problem in general relativity, see for example \cite{Bieri-Zipser,Christodoulou-Klainerman}.

When studying different notions of quasi-local energy, it is natural to evaluate the large sphere and the small sphere limits of the quasi-local energy to compare with the canonical measures of the gravitational energy in these situations. One expects the following \cite{Christodoulou-Yau,Penrose}:

\

1) For a family of surfaces approaching the infinity of an isolated system (the large sphere limit), the limit of the quasi-local energy recovers the total energy-momentum of the isolated system.

2) For a family of surfaces approaching a point $p$ (the small sphere limit), the limit of the quasi-local energy recovers the stress-energy tensor in spacetimes with matter fields and the Bel--Robinson tensor for vacuum spacetimes.

\

For spacetimes with $\Lambda = 0$, there are many works on evaluating the large sphere and the small sphere limits of different notions of quasi-local energy. See for example \cite{blk1,blk2,Chen-Wang-Yau1,Chen-Wang-Yau2,fst,Geroch,Horowitz-Schmidt,Huiken-Ilmanen,Kwong-Tam,Miao-Tam-Xie,Wang-Yau3,Wiygul,yu}.  The list we give here is by no means exhaustive. For a more comprehensive review of different notions of quasi-local energy and their limiting behaviors, see \cite{sz}  and the references therein. In a sequence of papers with Wang and Yau \cite{Chen-Wang-Yau1,Chen-Wang-Yau2,Wang-Yau3}, the above properties are confirmed for the Wang--Yau quasi-local energy. In particular, the small sphere limit of the Wang-Yau quasi-local energy is evaluated in \cite{Chen-Wang-Yau2} for a canonical family of surfaces approaching a point along its future null cone.  

In \cite{Chen-Wang-Yau3},  quasi-local energy and quasi-local conserved quantities are defined for spacetimes with a non-zero cosmological constant. In the same paper, the large sphere limit of the newly defined quasi-local conserved quantities is evaluated for asymptotically AdS initial data sets. It is proved that the large sphere limit of the quasi-local conserved quantities recovers the total conserved quantities for such initial data sets \cite{Abbott-Deser,Ashtekar-Magnon,Chrusciel-Herzlich,Chruscie-Nagy,Gibbons-Hull-Warner,Henneaux-T}. In this article, we evaluate the small sphere limit of the new quasi-local energy and confirm the second expected property.

The construction of the quasi-local energy is based on the Hamilton--Jacobi analysis of the gravitational action using isometric embedding of the surface into the reference space as the ground state. That is, an energy is assigned to each pair of an isometric embedding of the surface into the reference space and an observer Killing field in the reference space. For $\Lambda = 0$, the reference space for the Wang-Yau quasi-local energy is the Minkowski space. On the other hand, for $\Lambda < 0$, the reference space is the anti de-Sitter space (AdS space). The quasi-local mass is then defined to be the minimum of the assigned quasi-local energy among all possible pair. The Euler--Lagrange equation for this energy functional is referred to as the optimal embedding equation. 

To evaluate the small sphere limit of the quasi-local energy, we first study the limiting behavior of the optimal embedding equation. For the Wang--Yau quasi-local energy, the optimal embedding equation is studied in details in \cite{Chen-Wang-Wang,Chen-Wang-Yau1,Chen-Wang-Yau4,mt} which played an important role in \cite{Chen-Wang-Yau2} for evaluating the small sphere limit.  For spacetimes with a negative cosmological constant, the AdS space is used as the reference and the optimal embedding equation is more complicated for the following reasons:

\

1) While the existence of the isometric embedding is guaranteed by the work of \cite{Lin-Wang} by Lin and Wang, the isometric embedding has to be solved explicitly to evaluate the small sphere limit. For the AdS space, the static potential is coupled to the isometric embedding equation and makes it more difficult to solve explicitly.

2) The kernel for the optimal embedding is larger. In both \cite{Chen-Wang-Yau2} and this article, the optimal embedding equation for a surface in the reference space is used extensively to simplify the optimal embedding equation for the physical surface. However, due to the difference in the set of observer Killing field, the kernel of the optimal embedding equation for the AdS space is larger than that of the Minkowski space. This creates new difficulties in solving the optimal embedding equation.

\

Due to the above difficulties, for the AdS reference case, we can not recover all the general theorems in \cite{Chen-Wang-Wang,Chen-Wang-Yau1,Chen-Wang-Yau4,mt} concerning the optimal embedding equation of the Wang-Yau quasi-local energy. Nevertheless, we are able to obtain the results necessary to evaluate the small sphere limit. For a spacetime with matter fields, there is a unique choice of the leading term of the observer killing field such that the leading term of the optimal embedding equation is solvable. However, for a vacuum spacetime,  the quasi-local energy vanishes to higher order and the invertibility of the optimal embedding equation is more subtle. In fact, the leading order term of the optimal embedding equation is solvable for any choice of the observer killing field $T_0$. We will compute the qausi-local energy for each $T_0$ and the corresponding solution to the optimal embedding equation.

The structure of this article is as follows: In Section 2, we review the AdS space and its Killing fields. In Section 3, we review the quasi-local energy with reference in the AdS space. In Section 4, we describe the setting for the small sphere limit. In Section 5, we compute the expansions of the induced metric, the second fundamental forms and the connection 1-form in the small sphere limit. Using the expansions, we expend the optimal embedding equation in Section 6 and compute the non-vacuum small sphere limit of the quasi-local energy in Section 7, see Theorem \ref{thm_small_non}. The rest of the article is devoted to the small sphere limit in vacuum spacetimes. In Section 8, for each observer Killing field, we compute the leading order term of the isometric embedding solving the leading order term of the optimal embedding equation. The isometric embedding, which depends on the choice of the observer $T_0$, is denoted by $Y(T_0)$. In the next four sections, the quasi-local energy associated to the pair $(Y(T_0),T_0)$ is computed. Section 9, 10 and 11 are used to compute the three separate terms in the quasi-local energy and these results are combined in Section 12 to evaluate the limit of the quasi-local energy, see Theorem \ref{thm_small_vac}.


\section{Anti de-Sitter space and its Killing fields}
We review the AdS space and its Killing fields in this section.
Take $\R^{3,2}$ with the coordinate system $(y^0, y^1, y^2, y^3, y^4)$ and the metric
\[-(dy^4)^2+\sum_{i=1}^3(dy^i)^2-(dy^0)^2.\]
The AdS space can be identified with the hypersurface in $\R^{3,2}$
given by
\[-(y^4)^2+\sum_{i=1}^3(y^i)^2-(y^0)^2=-\frac{1}{\kappa^2}.\]

Consider the following parametrization of AdS space:
\begin{align*}
y^0&=\sqrt{\frac{1}{\kappa^2}+r^2}\sin t\\
y^1&=r\sin\theta\sin\phi\\
y^2&=r \sin\theta \cos\phi\\
y^3&=r \cos\theta\\
y^4&=\sqrt{\frac{1}{\kappa^2}+r^2}\cos t.
\end{align*}
This gives the static chart of the AdS space
\begin{align*}
-(1+\kappa^2 r^2) {d}t^2+\frac{d r^2}{(1+\kappa^2 r^2)}+r^2(d\theta^2+\sin^2\theta d\phi^2)
\end{align*}
and $V = \sqrt{1+\kappa^2 r^2}$ be the static potential of the AdS space.

The group $SO(3,2)$ leaves this hypersurface invariant and thus the isometry group of the AdS Space is $SO(3,2)$, which is $10$ dimensional. In particular,
a Killing field of the AdS space can be written as
\begin{equation}\label{Killing}
{\mathfrak K}=  A (y^0\frac{\partial}{\partial y^4}-y^4\frac{\partial}{\partial y^0} )- B_i( y^0\frac{\partial}{\partial y^i}+y^i\frac{\partial}{\partial y^0})- C_j (y^4\frac{\partial}{\partial y^j}+y^j\frac{\partial}{\partial y^4})+ D_p \epsilon_{pqr}y^q\frac{\partial}{\partial y^r}.  \end{equation}
For simplicity, we will write ${\mathfrak K}=(A,\vec{B},\vec{C},\vec{D})$ and consider $\vec{B}, \vec{C}$, and $\vec{D}$ as vectors in $\R^3$. An observer Killing field $T_0$ is a timelike hypersurface-orthogonal Killing field such that
\[  \min - \langle T_0,T_0 \rangle = 1. \]

The observer Killing fields in the AdS space are characterized in \cite[Proposition 3.1]{Chen-Hung-Wang-Yau}.
\begin{proposition}\label{observer_constraint}  A Killing field ${\mathfrak K}$ of the form \eqref{Killing}  is an observer Killing field if and only if
\[
\begin{split}
A \vec{D} = &  - \vec{B} \times \vec{C}\\
A > & \max \{ |\vec{B}|, |\vec{C}|, |\vec{D}| \}
\end{split}
\] and
\[ A^2 +  |\vec{D}|^2  -   |\vec{B}|^2-|\vec{C}|^2=\kappa^2.\]
\end{proposition}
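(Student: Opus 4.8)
The plan is to work in the flat ambient space $\R^{3,2}$, where every Killing field of the AdS space is the restriction of a linear vector field $y\mapsto My$ with $M$ in the Lie algebra of $SO(3,2)$, i.e. $N:=\eta M$ is antisymmetric ($\eta$ the ambient metric). Reading off \eqref{Killing}, the nonzero entries of $N$ are $N_{04}=A$, $N_{0i}=B_i$, $N_{4i}=C_i$, $N_{ik}=\epsilon_{ijk}D_j$, so that $(A,\vec B,\vec C,\vec D)$ are literally the components of the constant $2$-form $N$. Two computations drive everything. First, since the induced metric on AdS is the restriction of $\eta$ and $M^T\eta=-\eta M$, the norm is $\langle\mathfrak K,\mathfrak K\rangle=\langle My,My\rangle=-\langle y,M^2y\rangle$; thus $-\langle\mathfrak K,\mathfrak K\rangle=\langle y,M^2y\rangle$ is a quadratic form governed by the $\eta$-symmetric operator $M^2$. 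Second, a direct contraction gives the first invariant $\tfrac12 N_{\alpha\beta}N^{\alpha\beta}=A^2+|\vec D|^2-|\vec B|^2-|\vec C|^2=:p$, which is precisely the left-hand side of the third asserted identity.

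First I would treat hypersurface-orthogonality. Writing $\Theta$ for the Killing $1$-form, its ambient differential $d\Theta$ is a constant multiple of $N$, and $\Theta=-\iota_y N$, so $\Theta\wedge N=-\tfrac12\iota_y(N\wedge N)$. If $N$ is decomposable ($N\wedge N=0$) this vanishes identically, the twist vanishes, and $\mathfrak K$ is hypersurface-orthogonal. For the converse I would use that the position vector $y$ is the ambient normal to AdS, together with the identity $y^\flat\wedge\iota_y(\star P)=\langle y,y\rangle\star P-\langle y,P\rangle\,\iota_y\mathrm{vol}$ applied to $P:=\star(N\wedge N)$; evaluating on AdS, where $\langle y,y\rangle=-1/\kappa^2$, and letting $y$ vary forces $P=0$, i.e. $N\wedge N=0$. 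Finally I would compute the components of $\star(N\wedge N)$: the three spatial ones are proportional to $A\vec D+\vec B\times\vec C$, the remaining two to $\vec B\cdot\vec D$ and $\vec C\cdot\vec D$. Since for an observer field $A\neq 0$, vanishing of the spatial components already forces the other two; hence hypersurface-orthogonality is equivalent to $A\vec D=-\vec B\times\vec C$.

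Next I would extract the timelike condition and the normalization, using the decomposability just obtained. When $N\wedge N=0$ the nonzero eigenvalues of $M$ are $\pm\sqrt{-p}$, so $M^2=-p\,\mathrm{Id}$ on the invariant $2$-plane $\Pi=\mathrm{Im}\,M$ and $M^2=0$ on $\Pi^\perp=\ker M$; hence $-\langle\mathfrak K,\mathfrak K\rangle=-p\,\langle y_\Pi,y_\Pi\rangle$ with $y_\Pi$ the $\Pi$-component of $y$. The field is timelike on all of AdS iff $\Pi$ is negative definite (so $p>0$); testing the points with $\vec r=0$ reduces this to negative-definiteness of $\bigl(\begin{smallmatrix}|\vec B|^2-A^2 & \vec B\cdot\vec C\\ \vec B\cdot\vec C & |\vec C|^2-A^2\end{smallmatrix}\bigr)$, i.e. to $A>|\vec B|$, $A>|\vec C|$ (with future-pointing orientation fixing the sign of $A$) together with $pA^2=(A^2-|\vec B|^2)(A^2-|\vec C|^2)-(\vec B\cdot\vec C)^2>0$. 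Using $A\vec D=-\vec B\times\vec C$ gives $A|\vec D|=|\vec B\times\vec C|\le|\vec B||\vec C|<A^2$, so $A>|\vec D|$ is automatic, and timelike-everywhere becomes $A>\max\{|\vec B|,|\vec C|,|\vec D|\}$ together with $p>0$. For the normalization, negative-definiteness of $\Pi$ gives $\langle y_\Pi,y_\Pi\rangle\le-1/\kappa^2$ on AdS, with equality exactly on $\Pi\cap\mathrm{AdS}$, so $\min\bigl(-\langle\mathfrak K,\mathfrak K\rangle\bigr)=p/\kappa^2$, and $\min=1$ becomes $p=\kappa^2$. Conversely the three stated conditions force $p=\kappa^2>0$ and $A>\max\{|\vec B|,|\vec C|,|\vec D|\}$, which by the above yield a hypersurface-orthogonal, timelike, correctly normalized field.

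The main obstacle I expect is the characterization of the timelike condition, not the orthogonality computation. The tempting guess that $A>\max\{|\vec B|,|\vec C|,|\vec D|\}$ is itself the timelike condition is false: a decomposable $N$ can satisfy $A>\max$ yet have $p\le 0$, in which case $\Pi$ is Lorentzian (or degenerate) and $\mathfrak K$ is a boost that changes causal type, so the positivity $p>0$ must work in tandem with $A>\max$. One must also verify that timelike-ness only at the center $\vec r=0$ already propagates to all of AdS, which is exactly what the $\Pi$-decomposition supplies. Keeping careful track of which inequalities are independent and which are forced by $A\vec D=-\vec B\times\vec C$ is the delicate bookkeeping; the remainder reduces to the two explicit tensor computations of $\langle y,M^2y\rangle$ and $\star(N\wedge N)$.
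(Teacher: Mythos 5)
Your proposal is correct, but it cannot be compared against an in-paper argument for the simple reason that the paper offers none: Proposition \ref{observer_constraint} is quoted from Proposition 3.1 of \cite{Chen-Hung-Wang-Yau}, with only a remark that the general-$\kappa$ statement follows from the proof given there for $\kappa=1$. Your argument is thus a self-contained substitute, and its key steps check out. The identification of $(A,\vec B,\vec C,\vec D)$ with the components of the ambient antisymmetric $2$-form $N$, the invariant $\tfrac12 N_{\alpha\beta}N^{\alpha\beta}=A^2+|\vec D|^2-|\vec B|^2-|\vec C|^2=p$, the Pfaffian-type components of $\star(N\wedge N)$ being proportional to $A\vec D+\vec B\times\vec C$, $\vec B\cdot\vec D$ and $\vec C\cdot\vec D$ (with the latter two forced by the former once $A\neq 0$), and the identity $pA^2=(A^2-|\vec B|^2)(A^2-|\vec C|^2)-(\vec B\cdot\vec C)^2$ under the constraint $A\vec D=-\vec B\times\vec C$ are all correct; so is the Frobenius step, since the twist $3$-form $\iota_y(N\wedge N)$ is annihilated by $\iota_y$, hence vanishes as an ambient form as soon as its pullback to each tangent space of AdS vanishes, and varying $y$ then kills $\star(N\wedge N)$. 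Your $\Pi$-decomposition also correctly disposes of the two genuine subtleties: that timelikeness on the $r=0$ circle propagates to all of AdS only because $\Pi$ is definite, and that $A>\max\{|\vec B|,|\vec C|,|\vec D|\}$ alone does not imply timelikeness (your example with $\vec B=\vec C$, $A/\sqrt2\le|\vec B|<A$ shows $p\le 0$ can occur), the missing positivity being supplied in the proposition by the normalization $p=\kappa^2$. What your route buys, compared with deferring to \cite{Chen-Hung-Wang-Yau}, is a coordinate-free proof valid for all $\kappa$ at once, in which each of the three stated conditions is matched to an invariant of $N$: hypersurface-orthogonality to decomposability $N\wedge N=0$, timelikeness to negative-definiteness of the $2$-plane $\Pi$, and the normalization $\min(-\langle\mathfrak K,\mathfrak K\rangle)=1$ to $p=\kappa^2$ via $\min(-\langle\mathfrak K,\mathfrak K\rangle)=p/\kappa^2$. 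The only caveat, which you flag correctly, is that the strict sign $A>0$ is a time-orientation convention: with the paper's literal definition, $\mathfrak K\mapsto-\mathfrak K$ preserves observer-ness while flipping the sign of $A$, so future-directedness must be read into the definition for the ``only if'' direction to hold as stated.
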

\begin{remark}
Proposition 3.1 of \cite{Chen-Hung-Wang-Yau} states the above result for $\kappa=1$. It is straightforward to recover the result for general $\kappa$ from the proof.
\end{remark}
\begin{remark}
In particular, for an observer Killing field, we have 
\begin{equation} \label{obser_inequlaity}  A \ge \sqrt{\kappa^2+|\vec{C}|^2 } \end{equation}
\end{remark}
We will later normalize our spacetime by choosing $\kappa=1$. This corresponds to $\Lambda=-3$ in the Einstein equation.

\section{Quasi-local energy with anti de-Sitter reference}
In this section, we review the quasi-local energy with reference in the AdS space defined in \cite{Chen-Wang-Yau3}. Let $\Sigma$ be a closed embedded spacelike 2-surface in a spacetime $N$. We assume the mean curvature vector $H$ of $\Sigma$ is spacelike.  Let $J$ be the reflection of $H$ through the future outgoing light cone in the normal bundle of $\Sigma$.
The data used in the definition of the quasi-local energy is the triple $(\sigma,|H|,\alpha_H)$ on $\Sigma$ where $\sigma$ is the induced metric, $|H|$ is the norm of the mean curvature vector, and $\alpha_H$ is the connection 1-form of the normal bundle with respect to the mean curvature vector
\[ \alpha_H(\cdot )=\langle \nabla^N_{(\cdot)}   \frac{J}{|H|}, \frac{H}{|H|}   \rangle  \]
where $\nabla^N$ is the covariant derivative in $N$.

Given an isometric embedding $Y$ of $\Sigma$  into the AdS space and the observer Killing field  $\frac{\partial}{\partial t}$, let $\tau$ be the restriction of $t$ to $Y(\Sigma)$. Suppose the projection $\widehat{Y}$  of $Y(\Sigma)$ onto the static slice $t=0$ is embedded, and denote the induced metric, the second fundamental form, and the mean curvature of the image surface $\widehat{\Sigma}$ of $\widehat{Y}$  by $\hat{\sigma}_{ab}$, $\hat{h}_{ab}$, and $\widehat{H}$, respectively.  The quasi-local energy $E(\Sigma, Y, \frac{\partial}{\partial t})$  of $\Sigma$ with respect to the pair $(Y, \frac{\partial}{\partial t})$ is
\begin{equation}\label{energy_fix_chart_base}
\begin{split}
  E(\Sigma, Y,\frac{\partial}{\partial t})
= & \frac{1}{8 \pi}  \Big \{  \int V \widehat H d \widehat \Sigma -
 \int  \Big [ \sqrt{(1+V ^2| \nabla \tau|^2) |H|^2  V ^2 + div(V ^2 \nabla \tau)^2 }  \\
& \qquad -   div(V ^2 \nabla \tau)  \sinh^{-1} \frac{ div(V ^2 \nabla \tau) }{V |H|\sqrt{1+V ^2| \nabla \tau|^2} }
  - V ^2 \alpha_{H} (\nabla \tau)  \Big ] d \Sigma
\Big \},
\end{split}
\end{equation}
where $\nabla$ and $div$ are the covariant derivatives and the divergence with respect to the induced metric $\sigma$ of the surface $\Sigma$, respectively.

Let $H_0$ and $\alpha_{H_0}$ be the mean curvature vector and the connection form of $Y(\Sigma)$ in the AdS space. In terms of $H_0$ and $\alpha_{H_0}$,  we have
\begin{equation}\label{energy_fix_chart_graph}
\begin{split}
   E(\Sigma, Y,\frac{\partial}{\partial t})
=& \frac{1}{8 \pi}  \Big \{
 \int  \Big [ \sqrt{(1+V ^2| \nabla \tau|^2) |H_0|^2 V ^2 + div(V ^2 \nabla \tau)^2 }  \\
& \qquad -   div(V ^2 \nabla \tau)  \sinh^{-1} \frac{ div(V ^2 \nabla \tau) }{V |H_0|\sqrt{1+V ^2| \nabla \tau|^2} }
  - V ^2 \alpha_{H_0} (\nabla \tau)  \Big ] d \Sigma\\
& -  \int  \Big [ \sqrt{(1+V ^2| \nabla \tau|^2) |H|^2  V ^2 + div(V ^2 \nabla \tau)^2 }  \\
& \qquad -   div(V ^2 \nabla \tau)  \sinh^{-1} \frac{ div(V ^2 \nabla \tau) }{V |H|\sqrt{1+V ^2| \nabla \tau|^2} }
  - V ^2 \alpha_{H} (\nabla \tau)  \Big ] d \Sigma
\Big \}.
\end{split}
\end{equation}
While the above expressions seems to depend on the choice of the static chart, we can rewrite it purely in terms of the isometric embedding $Y$ and the observer $\frac{\partial}{\partial t}$. In fact,
\begin{equation} \label{interpret}
\begin{split}
V ^2 = &- \langle \frac{\partial}{\partial t},\frac{\partial}{\partial t} \rangle\\
V ^2  \nabla \tau=& - (\frac{\partial}{\partial t})^\top,
\end{split}
\end{equation}
where $(\frac{\partial}{\partial t})^\top$ denotes the tangential component of $\frac{\partial}{\partial t}$ to $Y(\Sigma)$. This allows us to define  $E(\Sigma,Y,T_0)$ for each pair of an isometric embedding $Y$ and an observer Killing field $T_0$ using \eqref{energy_fix_chart_base} via \eqref{interpret}.  Equivalently, we can define $E(\Sigma,Y,T_0)$  as follows:
\begin{definition}\label{energy_invariant}
The quasi-local energy $E(\Sigma, Y,T_0)$ of $\Sigma$ with respect to the pair $(Y,T_0)$ of an isometric embedding $Y$ and an observer $T_0$ is
\[
\begin{split}
   & 8 \pi E(\Sigma, Y,T_0) \\
=&
 \int_{\Sigma}  \Big [ \sqrt{  - \langle T_0^\perp,T_0^\perp \rangle |H_0|^2  + div(T_0^\top)^2 }  -   div(T_0^\top)  \sinh^{-1} \frac{ div(T_0^\top) }{|H_0|\sqrt{-  \langle T_0^\perp,T_0^\perp \rangle} }   + \alpha_{H_0} (T_0^\top)  \Big ]  d\Sigma\\
   & - \int_{\Sigma}  \Big [ \sqrt{ -  \langle T_0^\perp,T_0^\perp \rangle |H|^2  + div(T_0^\top)^2 }  -   div(T_0^\top)  \sinh^{-1} \frac{ div(T_0^\top) }{|H|\sqrt{-  \langle T_0^\perp,T_0^\perp \rangle} }   + \alpha_{H} (T_0^\top)  \Big ]d\Sigma .
\end{split}
\]
where $T_0^\perp$ is the normal part of $T_0$ to $Y(\Sigma)$.
\end{definition}

\begin{remark} \label{remark_2}
From the above formulation, it follows that the quasi-local energy $E(\Sigma, Y,T_0)$ is equivariant. Namely, that the energy is invariant if an isometry of the AdS space acts on $Y$ and $T_0$ at the same time.
\end{remark}
It is convenient to rewrite the quasi-local energy in terms of the quasi-local energy density and the quasi-local momentum density.
\begin{definition} The quasi-local energy density with respect to $(Y, T_0)$ is defined to be
\begin{equation} \label{rho} \begin{split} f &= \frac{\sqrt{|H_0|^2 +\frac{div (V ^2 \nabla \tau)^2}{V ^2+V ^4 |\nabla \tau|^2}} - \sqrt{|H|^2 +\frac{div (V ^2 \nabla \tau)^2}{V ^2+V ^4 |\nabla \tau|^2}} }{ V \sqrt{1+ V ^2|\nabla \tau|^2}}. \end{split}\end{equation}
The quasi-local momentum density  with respect to $(Y, T_0)$ is defined to be
\begin{equation}\label{j_momentum}  j = f V ^2 d \tau - d[ \sinh^{-1} (\frac{f div  (V ^2 \nabla \tau)}{|H_0||H|})]-\alpha_{H_0}  + \alpha_{H}. \end{equation}
\end{definition}
In terms of $f$ and $j$, we have
\begin{equation}\label{qlcq2}E(\Sigma, Y, T_0)=-\frac{1}{8\pi} \int_\Sigma
\left[ \langle T_0, T_0\rangle f+j(T_0^\top) \right]d\Sigma\end{equation}
The first variation of the quasi-local energy is evaluated in \cite[Theorem 5.4]{Chen-Wang-Yau3}. It will be used later in Lemma \ref{optimal_eq_leading}. For reader's convenience, the formula will be recalled in the proof of  Lemma \ref{optimal_eq_leading}.

\section{The small spheres}
We setup the small sphere limit as in \cite{Chen-Wang-Yau2}.  Let $p$ be a point in a spacetime $N$.  Let $C_p$ be the future null hypersurface generated by future null geodesics  starting at $p$. Pick any future directed timelike unit vector  $e_0$ at $p$. Using $e_0$, we normalize a null vector $L$ at $p$ by
\[  \langle L , e_0 \rangle =-1 .\]
We consider the null geodesics of the normalized $L$ and let $r$ be the affine parameter of these null geodesics. Let $\Sigma_r$ be the family of surfaces on $C_p$ defined by the level sets of the affine parameter $r$. The inward null normal $\underline L$ of $\Sigma_r$ is normalized so that
\[  \langle L ,\underline L\rangle =-1 . \]

We parametrize $\Sigma_r$ in the following way. Consider a smooth map
\begin{equation}\label{parametrization} X:S^2\times [0, \epsilon)\rightarrow N\end{equation} such that
for each fixed point in $S^2$, $X(\cdot, r), r\in [0, \epsilon)$ is a null geodesic parametrized by the affine parameter $r$, with $X(\cdot, 0)=p$
and $\frac{\partial X}{\partial r}(\cdot, 0)\in T_pN$ a null vector such that $\langle \frac{\partial X}{\partial r}(\cdot, 0), e_0\rangle=-1$ .
Let  $L=\frac{\partial X}{\partial r}$ be the null generator, $\nabla^N_L L=0$. We also choose a local coordinate system $\{u^a\}_{a=1, 2}$ on $S^2$ such that $\partial_a=\frac{\partial X}{\partial u^a}, a=1,2$ form a tangent basis to $\Sigma_r$. Let $\underline L$ be the null normal vector field along $\Sigma_r$ such that $\langle L, \underline{L}\rangle=-1$.
Denote
\[
\begin{split}
 l_{ab} = & \langle   \nabla^N_{\partial_a}  \partial_b , L\rangle    \\
 n_{ab} =& \langle   \nabla^N_{\partial_a}  \partial_b , \underline L\rangle   \\
 \eta_a  = &  \langle   \nabla^N_{L} \partial_a , \underline L \rangle
\end{split}
\]
for the second fundamental forms in the direction of $L$ and $\underline L$ and the connection 1-form in the null normal frame, respectively. We consider these as tensors on $S^2$ depending on $r$ and use the induced metric on $\Sigma_r$, $\sigma_{ab}=\langle \partial_a, \partial_b\rangle$, to raise or lower indexes. We have
\begin{equation}\label{tangent_covariant}
\begin{split}
 \nabla^N_{\partial_a} L&=-l_a^c \partial_c-\eta_a L\\
 \nabla^N_{\partial_a} \partial_b&=\gamma_{ab}^c\partial_c-l_{ab}\underline L-n_{ab}L\\
 \nabla^N_{\partial_a}\underline L&=-n_a^c\partial_c+\eta_a \underline L,
\end{split}
\end{equation} where $\gamma_{ab}^c$ are the Christoffel symbols of $\sigma_{ab}$. Let
\[
\begin{split}
\hat{l}_{ab}= &l_{ab}-\frac{1}{2}(\sigma^{cd}l_{cd})\sigma_{ab} \\
\hat{n}_{ab}=& l_{ab}-\frac{1}{2}(\sigma^{cd}l_{cd})\sigma_{ab}
\end{split}
\]
be the traceless part of $l_{ab}$ and $n_{ab}$.

The following identities for covariant derivatives are useful.
\[
\begin{split}
\nabla^N_{L}  \partial_a  =&  -l_a^c \partial_c - \eta_a L  \\
 \nabla^N_{L}  \underline L  = &  -\eta^b \partial_b .
\end{split}
\]
We consider $\sigma_{ab}, l_{ab}, n_{ab}, \eta_a$ as tensors on $S^2\times [0, \epsilon)$, or
tensors on $S^2$ that depend on the parameter $r$.  We shall see below that they have the following expansions.
\[
\sigma_{ab}=\tilde{\sigma}_{ab} r^2+O(r^3), \,\,  l_{ab}=-\tilde{\sigma}_{ab} r+O(r^2), \,\, n_{ab}=\frac{1}{2} \tilde{\sigma}_{ab} r+O(r^2), \,\,  \eta_{a}=\frac{1}{3} {\beta}_a r^2+O(r^3)
\] where $\beta_a=\lim_{r\rightarrow 0} R_{La L\underline L}$ is considered as a $(0,1)$ tensor on $S^2$, $\tilde \sigma_{ab}$ denotes the standard metric on unit $S^2$. Let $\tilde \nabla$ and $\tilde \Delta$ be the covariant derivative and the Laplacian with respect to $\tilde \sigma_{ab}$, respectively.

We shall also consider the pull-back of tensors from the null hypersurface. For example, we consider
$R(L, \cdot, L, \underline L)$ as a tensor defined on $C_p$ and take its pull-back through \eqref{parametrization}, which is then consider as a
$(0,1)$ tenors on $S^2$ that depends on $r$ (or on $S^2\times [0, \epsilon)$). We shall abuse the notations and still denote the pull-back tensor by $R_{L a L\underline L}$.
In particular, $R_{LabL }, R_{LaL\underline L}, R_{L\underline LL\underline L  }$ are considered as $r$ dependent $(0, 2)$ tensor,
$(0, 1)$ tensor, and a scalar function on $S^2$, respectively, of the following orders
\[ R_{LabL } = O( r^2), \,\, R_{LaL\underline L}= O(r) \,\,  {\rm and} \,\, R_{L\underline LL\underline L  } = O(1). \]

We first write down the expansions of $L$ and $\partial_a$. Let $x^0, x^i, i=1, 2, 3$ be a normal coordinates system at $p$ such that the original future timelike vector $e_0\in T_p N$ is $\frac{\partial}{\partial x^0}$. The parametrization \eqref{parametrization} is given by
\[X(u^a, r)=X^0(u^a, r)\frac{\partial}{\partial x^0}+X^i(u^a, r)\frac{\partial}{\partial x^i}\] with the following expansions:
\[\begin{split} X^0(u^a, r)&=r+O(r^2)\\
X^i(u^a, r)&=r \tilde{X}^i (u^a)+O(r^2), \end{split}\] where $\tilde{X}^i(u^a)$ are the three first eigenfunctions of the standard metric $\tilde{\sigma}_{ab}$ on $S^2$. For example, if we take the coordinates $u_a, a=1,2$ to be the standard spherical coordinate system $\theta, \phi$ with $\tilde{\sigma}=d\theta^2+\sin^2\theta d\phi^2$, then $\tilde{X}^1=\sin\theta\sin\phi$, $\tilde{X}^2=\sin\theta \cos\phi$, and $\tilde{X}^3=\cos\theta$.
In particular,
\begin{equation}\label{frame}\begin{split} L&=\frac{\partial X}{\partial r}=\frac{\partial}{\partial x^0}+\tilde{X}^i(u^a) \frac{\partial}{\partial x^i}+O(r)\\
\partial_a&=\frac{\partial X}{\partial u^a}=r\frac{\partial \tilde{X}^i}{\partial u^a}\frac{\partial}{\partial x^i}+O(r^2),\,\, a=1, 2.\end{split}\end{equation}

\section{The Expansion of the physical data}\label{sec_physicaldata}
In this section, we compute  the expansions of the induced metric, the second fundamental forms and the  connection 1-form of $\Sigma_r$. We compute the expansion of the geometric quantities in terms of the affine parameter $r$. In the first subsection, we state the expansion in non-vacuum spacetimes. The result is exactly the same as in Section 3.1 of \cite{Chen-Wang-Yau2}. We collect these results in the first subsection to be used later. In the second subsection, we derive the expansion in vacuum spacetimes with $\Lambda=-3$.
\subsection{Leading order expansion in non-vacuum spacetimes}
The geometric quantities satisfy the following differential equations:
\begin{lemma}
The induced metric, the second fundamental forms and the connection 1-form satisfy the following differential equations:
\begin{equation}\label{eq_sigma}  \partial_r \sigma_{ab} = -2 l_{ab} \end{equation}
\begin{equation}\label{eq_l}\partial_r l_{ab} = R_{LabL} -l_{ac} l^c_b\end{equation}
\begin{equation}\label{eq_n} \partial_r n_{ab} = R_{Lab\underline L}  - l_b^c n_{ac} +  \nabla_a  \eta_b - \eta_a\eta_b\end{equation}
\begin{equation}\label{eq_eta}\partial_r \eta_a = R_{LaL\underline L} +l_a^b \eta_b\end{equation}
\begin{equation} \label{eq_ray}\partial_r (\sigma^{ab} l_{ab})= \frac{1}{2}(\sigma^{ab} l_{ab}) ^2 + \hat{l}_a^b \hat{l}_b^a + Ric(L,L)\end{equation}
\begin{equation}\label{eq_trn}\partial_r (\sigma^{ab} n_{ab}) =  Ric(L,\underline L)+R_{L\underline LL\underline L}+  l^{ab} n_{ab} + div_{\sigma} \eta - \eta_a \eta^a. \end{equation}
$Ric$ and $R_{\alpha \beta \gamma \delta}$ are
 the Ricci curvature and the full Riemannian curvature tensor of the spacetime N, respectively.
\end{lemma}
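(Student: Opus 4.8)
The plan is to view all six identities as the null structure (Ricci) equations for the foliation $\{\Sigma_r\}$ of the cone $C_p$ and to derive them by differentiating the defining contractions along the geodesic generator $L$ and commuting covariant derivatives. The tools are that $L$ is affinely geodesic, $\nabla^N_L L=0$; that $L=\partial X/\partial r$ and $\partial_a=\partial X/\partial u^a$ are coordinate vector fields, so $[L,\partial_a]=0$ and hence $\nabla^N_L\partial_a=\nabla^N_{\partial_a}L$; and the frame identities \eqref{tangent_covariant} together with $\nabla^N_L\underline L=-\eta^b\partial_b$ recorded above. Throughout one uses $\langle L,L\rangle=\langle\underline L,\underline L\rangle=0$, $\langle L,\underline L\rangle=-1$, and $\langle L,\partial_a\rangle=\langle\underline L,\partial_a\rangle=0$.

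For \eqref{eq_sigma} I would differentiate $\sigma_{ab}=\langle\partial_a,\partial_b\rangle$ and substitute $\nabla^N_L\partial_a=\nabla^N_{\partial_a}L=-l_a^c\partial_c-\eta_a L$, the $L$–component pairing trivially against $\partial_b$. For \eqref{eq_l}, \eqref{eq_eta} and \eqref{eq_n} I would apply $L$ to the defining contractions $l_{ab}=\langle\nabla^N_{\partial_a}\partial_b,L\rangle$, $\eta_a=\langle\nabla^N_{\partial_a}L,\underline L\rangle$ and $n_{ab}=\langle\nabla^N_{\partial_a}\partial_b,\underline L\rangle$, commuting $\nabla^N_L\nabla^N_{\partial_a}=\nabla^N_{\partial_a}\nabla^N_L+R^N(L,\partial_a)$ (the bracket term vanishing since $[L,\partial_a]=0$). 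The curvature symbols $R_{LabL}=\langle R^N(L,\partial_a)\partial_b,L\rangle$, $R_{LaL\underline L}=\langle R^N(L,\partial_a)L,\underline L\rangle$ and $R_{Lab\underline L}=\langle R^N(L,\partial_a)\partial_b,\underline L\rangle$ appear directly from this commutation (in \eqref{eq_eta} the companion piece $\nabla^N_{\partial_a}\nabla^N_L L$ drops out because $\nabla^N_L L=0$), while the quadratic terms come from reducing the remaining contractions with \eqref{tangent_covariant} and $\nabla^N_L\underline L=-\eta^b\partial_b$: the $-l_{ac}l^c_b$ in \eqref{eq_l} from $\nabla^N_{\partial_a}\nabla^N_L\partial_b$, and the $+l_a^b\eta_b$ in \eqref{eq_eta} from the pairing $\langle\nabla^N_{\partial_a}L,\nabla^N_L\underline L\rangle$. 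For \eqref{eq_n} one must keep both the $\partial_a\eta_b$ coming from $\nabla^N_{\partial_a}(\eta_b L)$ paired with $\underline L$ and the term $\langle\nabla^N_{\partial_a}\partial_b,\nabla^N_L\underline L\rangle=-\gamma_{ab}^c\eta_c$; these assemble the intrinsic covariant derivative $\nabla_a\eta_b=\partial_a\eta_b-\gamma_{ab}^c\eta_c$ on $(S^2,\sigma)$, the rest producing $-l_b^c n_{ac}$ and $-\eta_a\eta_b$.

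The two trace equations then follow by contracting. Differentiating $\sigma^{ab}\sigma_{bc}=\delta^a_c$ and using \eqref{eq_sigma} gives $\partial_r\sigma^{ab}=2l^{ab}$, so $\partial_r(\sigma^{ab}l_{ab})=2l^{ab}l_{ab}+\sigma^{ab}\partial_r l_{ab}$; inserting \eqref{eq_l}, writing $\sigma^{ab}l_{ac}l^c_b=l_{ab}l^{ab}$, and splitting $l_{ab}$ into trace and traceless parts via $l_{ab}l^{ab}=\hat l_a^b\hat l_b^a+\tfrac12(\sigma^{ab}l_{ab})^2$ yields \eqref{eq_ray}, once one identifies $\sigma^{ab}R_{LabL}=Ric(L,L)$. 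Likewise, tracing \eqref{eq_n} with $\partial_r\sigma^{ab}=2l^{ab}$ gives \eqref{eq_trn} once $\sigma^{ab}R_{Lab\underline L}=Ric(L,\underline L)+R_{L\underline L L\underline L}$. Both contraction identities come from expanding the Ricci contraction in the null frame $\{L,\underline L,\partial_a\}$, where $g^{L\underline L}=-1$ is the only nonzero inverse-metric component outside the tangential block, together with the skew symmetries $R(\partial_a,L,L,\partial_b)=R_{LabL}$ and $R(\partial_a,L,\underline L,\partial_b)=R_{Lab\underline L}$. For $Ric(L,L)$ every null-pair contraction vanishes, since the middle slots are both $L$ and force an antisymmetric $(L,L)$ pair, leaving $Ric(L,L)=\sigma^{ab}R_{LabL}$. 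For $Ric(L,\underline L)$ exactly one null-pair contraction survives, namely $g^{\underline L L}R_{\underline L L\underline L L}=-R_{L\underline L L\underline L}$, giving $Ric(L,\underline L)=\sigma^{ab}R_{Lab\underline L}-R_{L\underline L L\underline L}$, which rearranges to the stated identity.

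The substitution bookkeeping is routine; the steps that most need care are the sign and index conventions in the two null-frame contraction identities, the skew symmetries needed to match the tangential contraction to the curvature symbols $R_{LabL}$ and $R_{Lab\underline L}$ as written, and the clean separation of $\partial_a\eta_b$ from $\nabla_a\eta_b$ in \eqref{eq_n}. None of these presents a genuine difficulty, so the lemma reduces to a direct frame computation.
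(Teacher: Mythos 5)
Your proposal is correct and matches the paper's (implicit) argument: the paper states this lemma without proof, since these are the standard null structure equations for the foliation of the light cone, taken directly from Section 3.1 of the earlier Chen--Wang--Yau small sphere paper, where they are obtained by exactly the direct frame computation you describe (differentiating the defining contractions along the affine generator, commuting covariant derivatives to produce the curvature terms, and contracting in the null frame for the two trace equations). Your two Ricci contraction identities, $Ric(L,L)=\sigma^{ab}R_{LabL}$ and $Ric(L,\underline{L})=\sigma^{ab}R_{Lab\underline{L}}-R_{L\underline{L}L\underline{L}}$, carry the sign conventions consistent with the curvature terms generated by the commutation identity, so the derivation is internally coherent as written.
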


We have the following expansions for the curvature tensor:
\begin{equation}
\begin{split}\label{exp_R_LaLN}
 R_{LabL } =  & r^2 \bar R_{LabL} +O(r^3)  \\
R_{LaL\underline L}=& r  \bar R_{L a L\underline L} +O(r^2) \\
 R_{L\underline LL\underline L  } =&  \bar R_{L\underline LL\underline L}+O(r),
\end{split}
\end{equation}
where  $\bar R_{LabL}$, $\bar R_{LaL\underline L}$ and $\bar R_{L\underline LL\underline L  }$ correspond to the appropriate rescaled limit of the respective
tensors as $r\rightarrow 0$. For example,
\[\bar R_{LabL}=\lim_{r\rightarrow 0} \frac{1}{r^2} R_{LabL}=R(\frac{\partial}{\partial x^0}+\tilde{X}^i \frac{\partial}{\partial x^i},   \frac{\partial \tilde{X}^j}{\partial u^a},\frac{\partial \tilde{X}^k}{\partial u^b}, \frac{\partial}{\partial x^0}+\tilde{X}^l\frac{\partial}{\partial x^l})(p).\] It is considered as a $(0, 2)$ tensor on the standard $S^2$.

\begin{lemma} \label{expansion_first}
We have the following expansions:
\begin{align}\label{l} l_{ab}= & - r \tilde \sigma_{ab} +\frac{2}{3}r^3 \bar R_{La b L}+ O(r^4) \\
  \label{sigma}
\sigma_{ab} =& r^2\tilde\sigma_{ab} - \frac{1}{3} r^4\bar R_{La b L} + O(r^5)\\
\label{l_contracted}l_a^c= &-r^{-1} \delta_a^c+\frac{1}{3} r \bar R_{La b L}\tilde{\sigma}^{bc}+ O(r^2)\\
 \label{eta_first_order}\eta_a = &\frac{1}{3}r^2 \bar R_{La L\underline L}+O(r^3)\\
\sigma^{ab} l_{ab} =& - \frac{2}{r} +\frac{  1}{3} r \bar Ric(L,L)    +O(r^2) \\
\sigma^{ab} n_{ab} =& \frac{1}{r} + r [\bar R_{L\underline L L\underline L}+\frac{2}{3}\bar  Ric(L,\underline L)+\frac{1}{6} \bar Ric(L,L)]+O(r^2).
\end{align}
\end{lemma}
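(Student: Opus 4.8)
The plan is to integrate the transport system of the first Lemma order by order in the affine parameter $r$, taking the leading-order expansions recorded in Section~4 as initial data and the curvature expansions \eqref{exp_R_LaLN} as the inhomogeneous input. The structural point is that at each order in $r$ the right-hand side of every evolution equation involves only lower-order geometric data together with the known rescaled curvature, so the coefficients can be solved for recursively.

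First I would treat \eqref{eq_sigma}--\eqref{eq_l} as a closed Riccati-type system for the pair $(\sigma_{ab},l_{ab})$. Writing $l_{ab}=-r\tilde\sigma_{ab}+r^2 p_{ab}+r^3 q_{ab}+O(r^4)$ and using $\sigma_{ab}=r^2\tilde\sigma_{ab}+\cdots$ to expand $l_{ac}l^c_b=l_{ac}\sigma^{cd}l_{db}$, one sees that $R_{LabL}=r^2\bar R_{LabL}+O(r^3)$ does not enter the $O(r)$ balance, so the Riccati term alone controls it and forces $p_{ab}=0$ (no $r^2$ term); the curvature first enters at $O(r^2)$ and there yields $q_{ab}=\tfrac23\bar R_{LabL}$, proving \eqref{l}. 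Integrating \eqref{eq_sigma} then gives \eqref{sigma}, and inverting that expansion of $\sigma_{ab}$ and contracting against \eqref{l} produces $l^c_a=\sigma^{cd}l_{ad}$, i.e. \eqref{l_contracted}. The one-form $\eta_a$ comes next: \eqref{eq_eta} is linear in $\eta$, and since $l^b_a\eta_b=-r^{-1}\eta_a+O(r^3)$ while the source is $R_{LaL\underline L}=r\bar R_{LaL\underline L}+O(r^2)$, writing $\eta_a=r^2 e_a+O(r^3)$ and matching at order $r$ gives $3e_a=\bar R_{LaL\underline L}$, which is \eqref{eta_first_order}.

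For the two traces I would use the scalar focusing equations rather than contracting the tensor expansions by hand. In \eqref{eq_ray} the round leading metric is umbilic, so $\hat l_{ab}=O(r^3)$ and the shear term $\hat l^b_a\hat l^a_b=O(r^2)$ is negligible; the equation collapses to a scalar Riccati equation for $\tau_1:=\sigma^{ab}l_{ab}$ driven by $Ric(L,L)=\bar{Ric}(L,L)+O(r)$, whose solution is $\tau_1=-2/r+\tfrac13 r\,\bar{Ric}(L,L)+O(r^2)$, the $r^{-1}$ balance also forcing the constant term to vanish. For $\tau_2:=\sigma^{ab}n_{ab}$ I would feed the already-computed expansions into \eqref{eq_trn}. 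Splitting $l^{ab}n_{ab}=\tfrac12\tau_1\tau_2+\hat l^{ab}\hat n_{ab}$ and noting $\hat l^{ab}\hat n_{ab}=O(r^2)$ and $\eta_a\eta^a=O(r^2)$, the equation reduces at the relevant orders to $\partial_r\tau_2=\bar{Ric}(L,\underline L)+\bar R_{L\underline LL\underline L}+\tfrac12\tau_1\tau_2+div_\sigma\eta+O(r)$, which I would integrate to read off the $O(r)$ coefficient.

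The \emph{main obstacle} is the divergence term in \eqref{eq_trn}. Since $\eta_a=\tfrac13 r^2\bar R_{LaL\underline L}+O(r^3)$, one finds $div_\sigma\eta=\tfrac13\,\tilde\sigma^{ab}\tilde\nabla_a\bar R_{LbL\underline L}+O(r)$, an honest $S^2$-divergence that threatens to contaminate $\tau_2$ with a non-algebraic term, in apparent conflict with the purely algebraic expansion claimed for $\sigma^{ab}n_{ab}$. Tracking the coefficients through the integration shows that the claimed formula amounts to the identity $\tilde\sigma^{ab}\tilde\nabla_a\bar R_{LbL\underline L}=3\bar R_{L\underline LL\underline L}+\bar{Ric}(L,\underline L)+\tfrac12\bar{Ric}(L,L)$, so the heart of the proof is to establish this reduction of the surface divergence to algebraic curvature at $p$. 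I would do this from the explicit normal-coordinate frame \eqref{frame}, expressing $\bar R_{LbL\underline L}$ as a function on $S^2$ through the first eigenfunctions $\tilde X^i$ and the $u$-dependence of $L$ and $\underline L$, and then using the Hessian relations $\tilde\nabla_a\tilde\nabla_b\tilde X^i=-\tilde\sigma_{ab}\tilde X^i$ together with $\sum_i\tilde X^i\tilde X^i=1$ and $\sum_i\partial_a\tilde X^i\,\partial_b\tilde X^i=\tilde\sigma_{ab}$ to convert every $S^2$-derivative into an algebraic contraction of the spacetime curvature. Equivalently, one may bypass \eqref{eq_trn} and expand $\sigma^{ab}n_{ab}$ directly in normal coordinates as in Section~3.1 of \cite{Chen-Wang-Yau2}. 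Once the identity is in hand, combining it with the $\tfrac12\tau_1\tau_2$ contribution gives the stated coefficient $\bar R_{L\underline LL\underline L}+\tfrac23\bar{Ric}(L,\underline L)+\tfrac16\bar{Ric}(L,L)$.
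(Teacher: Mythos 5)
Your proposal is correct and takes essentially the same route as the paper: Lemma \ref{expansion_first} is stated there without proof (the paper defers to Section 3.1 of \cite{Chen-Wang-Yau2}), and that derivation proceeds exactly as yours does, integrating the transport equations \eqref{eq_sigma}--\eqref{eq_trn} order by order with the curvature expansions \eqref{exp_R_LaLN} as source and the Section 4 leading orders as initial data. The divergence identity you isolate, $\tilde\nabla^a\beta_a = 3\bar R_{L\underline L L\underline L} + \bar Ric(L,\underline L) + \tfrac{1}{2}\bar Ric(L,L)$, is indeed the crux of the $\sigma^{ab}n_{ab}$ expansion and is correct in the paper's conventions (where $Ric(L,L)=\sigma^{ab}R_{LabL}$, as forced by consistency of \eqref{eq_l} with \eqref{eq_ray}); your frame computation using $\tilde\nabla_a\tilde\nabla_b\tilde X^i=-\tilde\sigma_{ab}\tilde X^i$, $\sum_i \partial_a\tilde X^i\,\partial_b\tilde X^i=\tilde\sigma_{ab}$, and $e_0=\tfrac{1}{2}L+\underline L$ does establish it.
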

In summary, we have the following expansions on the surfaces $\Sigma_r$:
\begin{lemma} \label{non_physical_data}
We have the following expansions for the data $(\sigma, |H| , div \alpha_H)$ on $S^2$:
\begin{equation}
\begin{split}
\sigma_{ab} = &  r^2\tilde\sigma_{ab} - \frac{1}{3} r^4\bar R_{L a b L} + O(r^5) \\
|H|^2 = & \frac{4}{r^2} + [2 \bar R_{L\underline LL\underline L} + \frac{4}{3}\bar Ric(L,\underline L) + \frac{1}{3} \bar Ric(L,L)] + O(r)  \\
 div_\sigma   \alpha_H = &  \tilde\Delta \left [  \frac{1}{2} \bar R_{L\underline LL\underline L} + \frac{1}{6} \bar Ric(L,L)  + \frac{1}{3}\bar Ric(L,\underline L) \right ] \\
&  -\bar R_{L\underline LL\underline L} - \frac{1}{3}\bar Ric(L,\underline L)- \frac{1}{6}\bar Ric(L,L) + O(r).
\end{split}
\end{equation}
\end{lemma}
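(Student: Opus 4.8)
The three expansions are all obtained by reducing each quantity to the null data $(l_{ab},n_{ab},\eta_a)$ whose expansions are recorded in Lemma \ref{expansion_first}, and then substituting. The metric line is already \eqref{sigma}, so no new work is needed there, and the content lies in $|H|^2$ and $div_\sigma\alpha_H$. For $|H|^2$, I would start from the normal part of \eqref{tangent_covariant}, which identifies the mean curvature vector as $H = -(\sigma^{ab}l_{ab})\underline L - (\sigma^{ab}n_{ab})L$. Since $L,\underline L$ are null with $\langle L,\underline L\rangle=-1$, this collapses to
\[ |H|^2 = -2\,(\sigma^{ab}l_{ab})(\sigma^{ab}n_{ab}). \]
Multiplying the two trace expansions of Lemma \ref{expansion_first} and truncating at order $r^0$ then yields the stated expansion of $|H|^2$; this step is purely algebraic.

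The connection one-form carries the real content. I would fix an orthonormal normal frame $(e_3,e_0)$, with $e_3$ spacelike and $e_0$ future timelike, related to the null frame by $L=e_0+e_3$ and $\underline L=\tfrac12(e_0-e_3)$, and encode the reflection of $H$ through the future null cone by a single boost angle $\phi$, writing $\tfrac{H}{|H|}=\cosh\phi\,e_3-\sinh\phi\,e_0$ and $\tfrac{J}{|H|}=\cosh\phi\,e_0-\sinh\phi\,e_3$. Substituting these into $\alpha_H(\partial_a)=\langle\nabla^N_{\partial_a}\tfrac{J}{|H|},\tfrac{H}{|H|}\rangle$ and using $\cosh^2\phi-\sinh^2\phi=1$, the $\phi$-derivative terms collapse to $-\partial_a\phi$ while the frame-rotation terms collapse to the normal connection coefficient $\langle\nabla^N_{\partial_a}e_0,e_3\rangle$, which equals $-\eta_a$ once $e_0,e_3$ are written back in terms of $L,\underline L$ and \eqref{tangent_covariant} is applied. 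Hence
\[ \alpha_H = -\,d\phi-\eta. \]

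Comparing the $e_3$- and $e_0$-components of $H = -(\sigma^{ab}l_{ab})\underline L - (\sigma^{ab}n_{ab})L$ identifies the boost angle, up to an additive constant, as $\phi=\tfrac12\log\frac{-\sigma^{ab}l_{ab}}{\sigma^{ab}n_{ab}}$, so that $d\phi$ is insensitive to the normalization constant of the null frame. Feeding the two trace expansions of Lemma \ref{expansion_first} into the logarithm gives
\[ \phi = -\,r^2\Big[\tfrac12\bar R_{L\underline LL\underline L}+\tfrac13\bar Ric(L,\underline L)+\tfrac16\bar Ric(L,L)\Big]+O(r^3), \]
whose leading coefficient is exactly the bracket appearing under $\tilde\Delta$ in the statement.

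Finally I take $div_\sigma\alpha_H=-\Delta_\sigma\phi-div_\sigma\eta$. Since $\phi$ is $O(r^2)$ with $r$-independent leading coefficient and $\sigma_{ab}=r^2\tilde\sigma_{ab}+O(r^4)$, the conformal factor turns $\Delta_\sigma$ into $r^{-2}\tilde\Delta$ at leading order, so $-\Delta_\sigma\phi$ reproduces the $\tilde\Delta[\cdots]$ term of the statement exactly. For the remaining term I substitute $\eta_a=\tfrac13 r^2\bar R_{LaL\underline L}+O(r^3)$ from \eqref{eta_first_order}; at leading order $div_\sigma$ becomes $r^{-2}\widetilde{div}$, so $-div_\sigma\eta=-\tfrac13\widetilde{div}(\bar R_{L\,\cdot\,L\underline L})+O(r)$. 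The main obstacle is this last piece: one must show, from the second Bianchi identity for the spacetime curvature at $p$ together with the frame \eqref{frame}, that the $S^2$-divergence of $\beta_a=\bar R_{LaL\underline L}$ equals the fixed combination $3\bar R_{L\underline LL\underline L}+\bar Ric(L,\underline L)+\tfrac12\bar Ric(L,L)$. Granting that identity, the three scalar terms of the statement drop out, and what remains is only careful bookkeeping of signs and orders.
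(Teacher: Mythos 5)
Your reduction to the null data is the right strategy, and it is essentially the route of the source the paper cites for this lemma (Section 3.1 of Chen--Wang--Yau): the identity $H=-(\sigma^{ab}n_{ab})L-(\sigma^{ab}l_{ab})\underline L$, hence $|H|^2=-2(\sigma^{ab}l_{ab})(\sigma^{ab}n_{ab})$, and the splitting of $\alpha_H$ into a boost-angle differential plus $\eta$, whose divergence becomes $r^{-2}\tilde\Delta$ of the leading coefficients. Your $div_\sigma\alpha_H$ computation does reproduce the stated formula, granting the identity you defer. But the $|H|^2$ step fails as written. Carrying out the multiplication you declined to do, the $\bar Ric(L,L)$ contributions cancel (the term $\frac{1}{3}\bar Ric(L,L)$ from $\sigma^{ab}l_{ab}$ exactly cancels the $\bar Ric(L,L)$ part of $(\sigma^{ab}n_{ab})^{(1)}$), and one finds
\[
-2(\sigma^{ab}l_{ab})(\sigma^{ab}n_{ab})=\frac{4}{r^2}+4\bar R_{L\underline LL\underline L}+\frac{8}{3}\bar Ric(L,\underline L)+O(r),
\]
which is \emph{not} the bracket in the statement; the printed bracket equals $2(\sigma^{ab}n_{ab})^{(1)}$. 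In fact the product above, not the printed line, is what is consistent with the rest of the paper: it gives $|H|=\frac{2}{r}+\big[\bar R_{L\underline LL\underline L}+\frac{2}{3}\bar Ric(L,\underline L)\big]r+O(r^2)$, i.e. $|H|=\frac{2}{r}+(W_0+1)r$ in vacuum with $\kappa=1$, exactly as used in Lemma \ref{data} and Sections 8--12. So the printed $|H|^2$ line appears to contain an error, but your write-up neither derives the printed statement nor detects the discrepancy; asserting that the purely algebraic step ``yields the stated expansion'' is false, and as a proof of the statement as printed this step does not go through.

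Second, the identity $\tilde\nabla^a\bar R_{LaL\underline L}=3\bar R_{L\underline LL\underline L}+\bar Ric(L,\underline L)+\frac{1}{2}\bar Ric(L,L)$, on which your entire $div_\sigma\alpha_H$ argument rests, is correct, but it is not a consequence of the second Bianchi identity: every curvature component here is frozen at the single point $p$, and no spacetime derivative of the curvature appears anywhere in the expression. It is a pointwise algebraic identity coming from the angular dependence of the frame: with $L=e_0+\tilde X^ie_i$, $\underline L=\frac{1}{2}(e_0-\tilde X^ie_i)$ and $\tilde\nabla_a\tilde\nabla_b\tilde X^i=-\tilde\sigma_{ab}\tilde X^i$, one differentiates $\bar R_{LaL\underline L}$ on $S^2$ and uses only the algebraic symmetries of the curvature tensor together with $\tilde\sigma^{ab}\tilde\nabla_a\tilde X^i\tilde\nabla_b\tilde X^j=\delta^{ij}-\tilde X^i\tilde X^j$; this is exactly how the vacuum analogues \eqref{Weyl_derivatives2} are obtained. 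Finally, a smaller inconsistency in your frame bookkeeping: with $L=e_0+e_3$ outgoing, one has $H/|H|\to-e_3$ as $r\to 0$, so the parametrization $H/|H|=\cosh\phi\,e_3-\sinh\phi\,e_0$ is not attainable ($\cosh\phi>0$); the orientation of $e_3$, equivalently the convention for the reflection $J$, must be pinned down carefully, since these overall signs are precisely what distinguish the stated $div_\sigma\alpha_H$ from its negative.
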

\subsection{Further expansions in vacuum spacetimes}
In this subsection, we assume the spacetime is vacuum and compute the higher order terms in the expansions for the physical data. Enough expansions are obtained to evaluate the leading term of the small sphere limit of the quasi-local energy. In a vacuum spacetime, the only non-trivial components of the curvature tensor are the Weyl curvature tensor. We have
\[  R_{\alpha \beta \gamma \delta} = W_{\alpha \beta \gamma \delta}  + \kappa^2 (g_{\alpha \gamma}g_{\beta\delta} - g_{\alpha \delta} g_{\beta \gamma}) \]
and
\[  Ric_{\alpha \delta} = -3 \kappa^2 g_{\alpha \delta}. \]
In terms of the null frame $\{ e_a, L ,  \underline L\}$, we have
\[
\begin{split}
R_{a L b L}= &W_{a  L  b L}\\
R_{a L b\underline{L}}= &W_{a\underline{L} b\underline{L}} -  \kappa^2 g_{ab}\\
R_{ a L L \underline{L}} = & W_{ a L L \underline{L}}  \\
R_{L \underline{L}  L  \underline{L}} = & W_{L \underline{L}  L  \underline{L}} - \kappa^2.
\end{split}
\]

We decompose the Weyl curvature tensor at the point $p$ using the null frame $\{ e_a, L ,  \underline L\}$ following the notation of Christodoulou and Klainerman in \cite{Christodoulou-Klainerman}:
\begin{align*}
{\alpha}_{ab} &=\bar W_{aLbL}   \  & \underline{\alpha}_{ab} =& \bar W_{a\underline{L} b\underline{L}} \\
  \beta_a & =\bar W_{a L\underline{L}L}  \    & \underline{\beta}_a=&\bar W_{a\underline{L}\underline{L} L} \\     
  \rho&=\bar W_{\underline{L}L\underline{L}L}  \ & \sigma = &\epsilon^{ab}\bar W_{ab\underline{L}L}.  
\end{align*}

From the vacuum condition and the Bianchi equations, we obtain the following relations:
\begin{equation}\label{relations}\begin{split}
\bar W_{L a b \underline L} &= \frac{1}{2} \tilde \sigma_{ab} \rho + \frac{1}{4}\epsilon_{ab}\sigma\\
\bar W_{abcL}&=-\epsilon_{ab} \epsilon_{cd} \beta^d\\
\bar W_{abc\underline L}&=\epsilon_{ab} \epsilon_{cd} \underline{\beta}^d\\
\bar W_{ab\underline{L} L}&=\frac{1}{2} \epsilon_{ab}\sigma.\end{split}\end{equation}

All $\alpha, \underline\alpha, \beta, \underline\beta, \rho$ and $\sigma$ are considered as tensors on $S^2$ through the limiting process described above. In particular, the covariant derivatives of them with respect to the standard metric $\tilde{\sigma}_{ab}$ can be computed as follows.

\begin{lemma} \cite[Lemma 3.6]{Chen-Wang-Yau2}
\begin{equation}\label{Weyl_derivatives1}
\begin{split}
\tilde{\nabla}_c \alpha_{ab}= &(\tilde{\sigma}_{ca}\tilde{\sigma}_{bd}+\tilde{\sigma}_{cb}\tilde{\sigma}_{ad}+\epsilon_{ca}
\epsilon_{bd}+\epsilon_{cb}
\epsilon_{ad})\beta^d\\
\tilde{\nabla}_c \underline{\alpha}_{ab}= &\frac{1}{2} (\tilde{\sigma}_{ca}\tilde{\sigma}_{bd}+\tilde{\sigma}_{cb}\tilde{\sigma}_{ad}+\epsilon_{ca}
\epsilon_{bd}+\epsilon_{cb}
\epsilon_{ad})\underline{\beta}^d\\
\tilde \nabla_a \beta_b = &-\frac{3}{4} \sigma \epsilon_{ab}  + \frac{3}{2} \rho \tilde \sigma_{ab}- \frac{1}{2} \alpha_{ab}\\
\tilde \nabla_a \underline{\beta}_b = &\frac{3}{8} \sigma \epsilon_{ab}  + \frac{3}{4} \rho \tilde \sigma_{ab}- \underline{\alpha}_{ab}\\
\tilde{\nabla}_a \rho= &-\beta_a-2\underline{\beta}_a\\
\tilde{\nabla}_a \sigma= &2\epsilon_{ab}(\beta^b-2\underline{\beta}^b).
\end{split}
 \end{equation}
\end{lemma}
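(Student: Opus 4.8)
The plan is to observe that, through the limiting process, each of the six components is the single fixed Weyl tensor $W$ at $p$ contracted against a null frame whose dependence on the point of $S^2$ enters only through the first eigenfunctions $\tilde X^i$. To leading order one has $L = \frac{\partial}{\partial x^0} + \tilde X^i \frac{\partial}{\partial x^i}$ and $\underline L = \frac12(\frac{\partial}{\partial x^0} - \tilde X^i\frac{\partial}{\partial x^i})$, while the rescaled tangent vectors are $\hat e_a = \tilde\nabla_a \tilde X^i\,\frac{\partial}{\partial x^i}$; using $\sum_i(\tilde X^i)^2 = 1$ one checks directly that $L$ and $\underline L$ are null, that $\langle L,\underline L\rangle = -1$, and that $\hat e_a$ is orthogonal to both $L$ and $\underline L$. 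In this frame the definitions become $\alpha_{ab} = W(\hat e_a,L,\hat e_b,L)$, $\underline\alpha_{ab} = W(\hat e_a,\underline L,\hat e_b,\underline L)$, $\beta_a = W(\hat e_a,L,\underline L,L)$, $\underline\beta_a = W(\hat e_a,\underline L,\underline L,L)$, $\rho = W(\underline L,L,\underline L,L)$ and $\sigma = \epsilon^{ab}W(\hat e_a,\hat e_b,\underline L,L)$, with $W$ evaluated at $p$. Since $W$ carries no $S^2$-dependence, the entire covariant derivative $\tilde\nabla_c$ falls only on the frame vectors.

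The one analytic input is the Hessian identity for the first eigenfunctions on the unit sphere, $\tilde\nabla_c\tilde\nabla_a\tilde X^i = -\tilde X^i\tilde\sigma_{ca}$. Combined with the relation $\tilde X^i\frac{\partial}{\partial x^i} = \frac12 L - \underline L$, it yields the frame derivative rules $\tilde\nabla_c L = \hat e_c$, $\tilde\nabla_c\underline L = -\frac12\hat e_c$ and $\tilde\nabla_c\hat e_a = -\tilde\sigma_{ca}(\frac12 L - \underline L)$. Applying $\tilde\nabla_c$ to each contraction and distributing by the Leibniz rule therefore produces a sum of contractions of the same fixed $W$, in which exactly one argument has been replaced by $\hat e_c$, by $L$, or by $\underline L$.

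I would then identify each resulting contraction using the antisymmetries and pair symmetry of $W$ together with the vacuum Bianchi relations \eqref{relations}. The two scalar identities come out at once: differentiating $\rho = W(\underline L,L,\underline L,L)$ gives $\tilde\nabla_a\rho = -\beta_a - 2\underline\beta_a$ after only the Weyl symmetries, and differentiating $\sigma$ and contracting with $\epsilon^{ab}$ (using $\epsilon^{ab}\epsilon_{ab} = 2$ together with $\bar W_{abcL} = -\epsilon_{ab}\epsilon_{cd}\beta^d$ and $\bar W_{abc\underline L} = \epsilon_{ab}\epsilon_{cd}\underline\beta^d$) gives $\tilde\nabla_a\sigma = 2\epsilon_{ab}(\beta^b - 2\underline\beta^b)$. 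For $\alpha_{ab}$ and $\underline\alpha_{ab}$, differentiating the $\hat e$-slots produces the $\tilde\sigma\tilde\sigma$ terms via the Hessian identity, while differentiating the null slots produces the $\epsilon\epsilon$ terms via the same Bianchi relation, assembling the projector $\tilde\sigma_{ca}\tilde\sigma_{bd}+\tilde\sigma_{cb}\tilde\sigma_{ad}+\epsilon_{ca}\epsilon_{bd}+\epsilon_{cb}\epsilon_{ad}$ contracted against $\beta^d$, respectively against $\frac12\underline\beta^d$. For $\tilde\nabla_a\beta_b$ and $\tilde\nabla_a\underline\beta_b$ one uses in addition the relations $\bar W_{Lab\underline L} = \frac12\tilde\sigma_{ab}\rho + \frac14\epsilon_{ab}\sigma$ and $\bar W_{ab\underline L L} = \frac12\epsilon_{ab}\sigma$ to reduce the remaining mixed contractions to $\rho$, $\sigma$, $\alpha$ and $\underline\alpha$, producing the stated right-hand sides.

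The main obstacle is organizational rather than conceptual: the computation is algebraic once the frame derivative rules are fixed, so the effort lies in keeping the many contractions of $W$ straight and systematically collapsing each $W(\ldots,\hat e_c,\ldots)$ back into the six fundamental components by exhausting the symmetries of $W$ and the relations \eqref{relations}. The feature that makes this work is that no derivative of $W$ ever enters, since all $S^2$-dependence is carried by the frame; the only delicate bookkeeping is separating the trace part ($\rho$), the antisymmetric part ($\sigma$), and the trace-free symmetric parts ($\alpha$, $\underline\alpha$) correctly at each step.
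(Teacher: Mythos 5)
Your argument is correct and is essentially the proof behind this lemma: the paper itself gives no proof (it simply cites \cite[Lemma 3.6]{Chen-Wang-Yau2}), and the cited proof proceeds exactly by your observation that the fixed Weyl tensor at $p$ carries no $S^2$-dependence, so $\tilde\nabla_c$ acts only on the frame $\bigl(L=\partial_{x^0}+\tilde X^i\partial_{x^i},\ \underline L=\tfrac12(\partial_{x^0}-\tilde X^i\partial_{x^i}),\ \hat e_a=\tilde\nabla_a\tilde X^i\partial_{x^i}\bigr)$ via the Hessian identity $\tilde\nabla_c\tilde\nabla_a\tilde X^i=-\tilde\sigma_{ca}\tilde X^i$, after which the Weyl symmetries and the algebraic relations \eqref{relations} collapse every term to the stated right-hand sides. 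Your frame derivative rules and the resulting coefficients all check out.
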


Contracting with respect to $\tilde{\sigma}_{ab}$ and $\epsilon_{ab}$, we obtain the following formulae:
\begin{lemma} \cite[Lemma 3.7]{Chen-Wang-Yau2}
\begin{equation}\label{Weyl_derivatives2}
\begin{split}\tilde \nabla^a \alpha_{ab} = & 4 \beta_b, \epsilon^{ca}\nabla_c\alpha_{ab}=4\epsilon_{bd}\beta^d\\
\tilde \nabla^a \underline \alpha_{ab} = & 2 \underline \beta_b, \epsilon^{ca}\nabla_c\underline \alpha_{ab}=2\epsilon_{bd}\underline \beta^d\\
\tilde \nabla^a \beta_a = & 3 \rho, \epsilon^{ab}\tilde{\nabla}_a\beta_b=-\frac{3}{2}\sigma\\
\tilde \nabla^a \underline \beta_a = &\frac{3}{2} \rho, \epsilon^{ab}\tilde{\nabla}_a\underline\beta_b=\frac{3}{4}\sigma.\end{split}
\end{equation}
In particular, it follows that $\tilde{\Delta}\rho=-6\rho$ and $\tilde{\Delta}\sigma=-6\sigma$.
\end{lemma}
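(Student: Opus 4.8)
\section*{Proof proposal}

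The plan is to obtain every identity in \eqref{Weyl_derivatives2} by directly contracting the covariant-derivative formulas of \eqref{Weyl_derivatives1} against the round metric $\tilde\sigma^{ab}$ and the area form $\epsilon^{ab}$ of the unit sphere, and then to read off the two Laplacian statements by taking one further divergence. Throughout, the only external inputs are the two-dimensional identity $\epsilon_{ab}\epsilon_{cd}=\tilde\sigma_{ac}\tilde\sigma_{bd}-\tilde\sigma_{ad}\tilde\sigma_{bc}$, the traces $\tilde\sigma^{ab}\tilde\sigma_{ab}=\epsilon^{ab}\epsilon_{ab}=2$, and the fact that $\alpha_{ab}=\bar W_{aLbL}$ and $\underline\alpha_{ab}=\bar W_{a\underline L b\underline L}$ are symmetric and trace-free (so $\tilde\sigma^{ab}\alpha_{ab}=\tilde\sigma^{ab}\underline\alpha_{ab}=0$ and $\epsilon^{ab}\alpha_{ab}=\epsilon^{ab}\underline\alpha_{ab}=0$).

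For $\tilde\nabla^a\alpha_{ab}$ I would contract the first line of \eqref{Weyl_derivatives1} with $\tilde\sigma^{ca}$. The four terms of $(\tilde\sigma_{ca}\tilde\sigma_{bd}+\tilde\sigma_{cb}\tilde\sigma_{ad}+\epsilon_{ca}\epsilon_{bd}+\epsilon_{cb}\epsilon_{ad})\beta^d$ then contribute, respectively, $2\tilde\sigma_{bd}$, $\tilde\sigma_{bd}$, $0$ (antisymmetric $\epsilon_{ca}$ against symmetric $\tilde\sigma^{ca}$), and $\tilde\sigma_{bd}$ (from $\tilde\sigma^{ca}\epsilon_{cb}\epsilon_{ad}=\epsilon_{cb}\epsilon^{c}{}_{d}=\tilde\sigma_{bd}$ via the $\epsilon$-$\tilde\sigma$ identity), summing to $4\tilde\sigma_{bd}\beta^d=4\beta_b$. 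Contracting the same line with $\epsilon^{ca}$ instead gives $4\epsilon_{bd}\beta^d$, and the identical computation with the overall factor $\tfrac12$ yields the two $\underline\alpha$ relations with $\beta$ replaced by $\underline\beta$. The divergence and $\epsilon$-contraction of $\beta,\underline\beta$ come from lines three and four: tracing line three with $\tilde\sigma^{ab}$ annihilates the $\sigma\epsilon_{ab}$ and the $\alpha_{ab}$ terms and leaves $\tfrac32\rho\cdot 2=3\rho$, while contracting with $\epsilon^{ab}$ isolates $-\tfrac34\sigma\cdot 2=-\tfrac32\sigma$; line four gives $\tfrac32\rho$ and $\tfrac34\sigma$ in the same way.

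Finally, for the two Laplacian relations I would take the divergence of lines five and six of \eqref{Weyl_derivatives1}. For $\rho$ one has $\tilde\Delta\rho=\tilde\nabla^a\tilde\nabla_a\rho=-\tilde\nabla^a\beta_a-2\tilde\nabla^a\underline\beta_a=-3\rho-2\cdot\tfrac32\rho=-6\rho$ after substituting the values just obtained. For $\sigma$, since $\epsilon_{ab}$ is parallel, $\tilde\Delta\sigma=2\epsilon_{ab}\tilde\nabla^a(\beta^b-2\underline\beta^b)=2\epsilon^{cd}\tilde\nabla_c(\beta_d-2\underline\beta_d)=2\bigl(-\tfrac32\sigma-2\cdot\tfrac34\sigma\bigr)=-6\sigma$. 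The computation is entirely mechanical, so there is no genuine obstacle; the only point demanding care is the bookkeeping of index positions in the two-dimensional $\epsilon$-$\tilde\sigma$ identities and the raising convention $\epsilon_{ab}\tilde\nabla^a\beta^b=\epsilon^{cd}\tilde\nabla_c\beta_d$, where a stray sign would propagate into the $\sigma$ relation.
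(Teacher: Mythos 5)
Your proposal is correct and follows exactly the route the paper indicates: the identities in \eqref{Weyl_derivatives2} are obtained by contracting the formulas of \eqref{Weyl_derivatives1} with $\tilde\sigma^{ab}$ and $\epsilon^{ab}$ (using symmetry and tracelessness of $\alpha_{ab}$, $\underline\alpha_{ab}$), and the eigenvalue relations $\tilde\Delta\rho=-6\rho$, $\tilde\Delta\sigma=-6\sigma$ follow by taking the divergence of the gradient formulas for $\rho$ and $\sigma$ and substituting the just-derived divergences of $\beta$ and $\underline\beta$. Your index bookkeeping (including the $\epsilon$--$\tilde\sigma$ identity and the resulting coefficients $4$, $2$, $3$, $\tfrac32$, $-\tfrac32$, $\tfrac34$) checks out.
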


The covariant derivative in the spacetime $N$ at $p$ in the direction of $L$ is denoted by the symbol $D$. For example,
\[ D\alpha_{ab} = \nabla^N_L W(e_a,L,e_b,L)(p).   \]
$D\alpha_{ab}$ is also considered as a tensor on $S^2$ through the limiting process and its covariant derivatives with respect to the standard metric $\tilde{\sigma}_{ab}$ can be computed in the same manner. Relations similar to equation \eqref {Weyl_derivatives2} hold among $D$ of the Weyl curvature components.
\begin{lemma}\cite[Lemma 3.9]{Chen-Wang-Yau2}
\label{D_divergence}
\begin{equation}\begin{split}
 \tilde \nabla^a D \beta_a =  & 4 D \rho\\
 \tilde \nabla^a D^2 \beta_a =  & 5 D^2 \rho\\
\tilde\nabla^a(D\alpha_{ab})= & 5D\beta_b\\
\tilde\nabla^a(D^2\alpha_{ab})=& 6 D^2\beta_b\end{split}
\end{equation}
\end{lemma}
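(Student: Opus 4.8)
The plan is to prove each of the four identities by mimicking the contraction argument that yields \eqref{Weyl_derivatives2}, while carefully tracking the extra terms produced by the derivative operator $D$. The starting observation is that $D$ of a Weyl component is simply the restriction to the null frame of the fixed algebraic tensor $\nabla^N_L W(p)$. In a vacuum spacetime the Einstein part of the curvature is parallel, so $\nabla^N_L W(p)$ and $\nabla^N_L \nabla^N_L W(p)$ again enjoy all the algebraic symmetries of a Weyl tensor (the two pair-antisymmetries, the pair interchange symmetry, the first Bianchi identity and tracelessness); by the same algebraic derivation used for $W$, they also satisfy the relations \eqref{relations} with each component replaced by its $D$- (respectively $D^2$-) derivative. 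The genuinely new feature is that the derivative direction $L=\frac{\partial}{\partial x^0}+\tilde X^i\frac{\partial}{\partial x^i}$ itself carries the first eigenfunctions $\tilde X^i$, so each application of $D$ attaches one extra frame factor that must also be differentiated when computing an angular divergence.

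Using the rescaled tangent frame $e_a=\tilde\nabla_a\tilde X^i\,\frac{\partial}{\partial x^i}$ and the conjugate null normal $\underline{L}$, I would write $D\beta_a=(\nabla^N_L W)(e_a,L,\underline{L},L)$, and expand $\tilde\nabla^a(D\beta_a)$ by the Leibniz rule. From \eqref{frame} one checks that $\tilde\nabla_c e_a=-\tilde\sigma_{ca}(\tfrac12 L-\underline{L})$, $\tilde\nabla_c L=e_c$ and $\tilde\nabla_c\underline{L}=-\tfrac12 e_c$. Differentiating the four ordinary slots of $W$ therefore reproduces exactly the computation behind \eqref{Weyl_derivatives2}, and since $\nabla^N_L W$ shares the symmetries and relations \eqref{relations}, these terms contribute the undifferentiated coefficient, namely $3\,D\rho$ for the $\beta$ identity and $4\,D\beta_b$ for the $\alpha$ identity. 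The new contribution comes from letting $\tilde\nabla_c$ act on the derivative direction $L$; because $\tilde\nabla_c L=e_c$, this produces, after contracting $c$ with $a$, the single extra term $\sum_a(\nabla^N_{e_a}W)(e_a,L,\underline{L},L)$.

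This extra term is evaluated by the contracted second Bianchi identity, which in vacuum reads $\nabla^\mu W_{\mu\nu\alpha\beta}=0$. Decomposing the inverse metric in the null frame as $\sum_a e_a\otimes e_a-L\otimes\underline{L}-\underline{L}\otimes L$ gives
\begin{equation*}
0=\sum_a(\nabla^N_{e_a}W)(e_a,L,\underline{L},L)-(\nabla^N_L W)(\underline{L},L,\underline{L},L)-(\nabla^N_{\underline{L}}W)(L,L,\underline{L},L).
\end{equation*}
The last term vanishes by the antisymmetry of $W$ in its first pair, so the extra term equals $(\nabla^N_L W)(\underline{L},L,\underline{L},L)=D\rho$. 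Adding this to the $3\,D\rho$ from the ordinary slots yields $\tilde\nabla^a D\beta_a=4\,D\rho$; the identical computation with the first slot paired against $e_b$, together with the interchange symmetry $W(\underline{L},L,e_b,L)=W(e_b,L,\underline{L},L)$, gives $\tilde\nabla^a(D\alpha_{ab})=5\,D\beta_b$. Thus each factor of $D$ shifts the divergence coefficient by exactly $+1$.

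For the second-order identities I would iterate: $D^2\beta_a=(\nabla^N_L\nabla^N_L W)(e_a,L,\underline{L},L)$ now carries two derivative-direction factors of $L$, each producing its own extra term, and each is reduced by the contracted Bianchi identity to a copy of $D^2\rho$ (respectively $D^2\beta_b$). This gives the shift $+2$, hence $\tilde\nabla^a D^2\beta_a=5\,D^2\rho$ and $\tilde\nabla^a(D^2\alpha_{ab})=6\,D^2\beta_b$. The step I expect to be the real obstacle is the bookkeeping of these extra terms: one must verify that differentiating the derivative direction produces exactly one clean copy of the contracted-Bianchi term with no surviving $\nabla^N_{\underline{L}}$ contribution (this is precisely where the first-pair antisymmetry is used), and that $\nabla^N_L W$ and $\nabla^N_L\nabla^N_L W$ genuinely inherit the algebraic relations \eqref{relations} so that the ordinary-slot terms reproduce the coefficients $3$ and $4$. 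Once these two facts are secured, the stated coefficients $4,5,5,6$ follow.
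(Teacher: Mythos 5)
Your overall strategy is the right one, and it is the natural way to prove this lemma (the paper itself does not reprove it; it quotes \cite[Lemma 3.9]{Chen-Wang-Yau2}, where the computation is of exactly this kind): regard each component as a fixed tensor at $p$ ($\nabla^N W$, resp.\ $(\nabla^N)^2 W$) contracted against the point-dependent frame, differentiate only the frame, let the four ordinary slots reproduce the coefficients $3$ and $4$ via the relations \eqref{relations} (which $\nabla^N_L W$ does inherit, since they are algebraic consequences of the pair symmetries, the first Bianchi identity and tracelessness, all preserved by covariant differentiation), and convert the extra derivative-slot term by the contracted second Bianchi identity $\nabla^\mu W_{\mu\nu\alpha\beta}=0$. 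Your frame derivatives are correct, and your proofs of the two first-order identities $\tilde\nabla^a D\beta_a=4D\rho$ and $\tilde\nabla^a(D\alpha_{ab})=5D\beta_b$ are complete.

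The genuine gap is in the second-order identities. Writing $D^2\beta_a=L^\mu L^\nu\nabla_\mu\nabla_\nu W(e_a,L,\underline{L},L)$, the angular derivative of the two derivative-direction factors produces the two terms $\tilde\sigma^{ca}\,\nabla_L\nabla_{e_c}W(e_a,L,\underline{L},L)$ and $\tilde\sigma^{ca}\,\nabla_{e_c}\nabla_L W(e_a,L,\underline{L},L)$. The contracted Bianchi identity, in the only form available here, $\nabla_\lambda\nabla^\mu W_{\mu\nu\alpha\beta}=\nabla_\lambda(0)=0$, applies directly to the first term (differentiated $L$ in the \emph{inner} derivative slot) but says nothing about the second, because $\nabla^2 W$ is not symmetric in its two derivative slots. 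To reduce the second term you must commute $\nabla_{e_c}$ past $\nabla_L$, which generates Riemann-times-Weyl commutator terms that your argument never mentions; these are individually nonzero (for instance the commutator acting on the third slot contributes the contraction of $W_{\cdot L\underline{L}\cdot}$ with $W_{\cdot L\cdot L}$, which in components is $-|\beta|^2$ for the $\beta$-identity and $-2\alpha_{ab}\beta^b$ for the $\alpha$-identity). They do cancel in the sum: the Ricci-type term vanishes by the Einstein condition, the term from the second slot is a pairing of a symmetric with an antisymmetric $2$-tensor, and the terms from the third and fourth slots annihilate each other after using $W_{\mu LL\rho}=-W_{\mu L\rho L}$ and relabeling dummy indices. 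So your coefficients $5$ and $6$ are correct, but this cancellation is an essential step, not bookkeeping that can be waved at; as written, the $D^2$ cases are not proved.
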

We have the following expansions for the Weyl curvature tensor.  
\begin{lemma}\cite[Lemma 3.10]{Chen-Wang-Yau2}
\begin{equation}\begin{split}\label{exp_LaLN} W_{LaL\underline L}&= r  \beta_a +r^2 D\beta_a +\frac{1}{2} r^3 D^2\beta_a+O(r^4) \\
 W_{L\underline LL\underline L  } &=\rho + r D \rho+r^2 [ \frac{1}{2} D^2 \rho -\frac{1}{3} |\beta|^2]+O(r^3). \end{split}
\end{equation}
\end{lemma}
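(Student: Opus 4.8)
The plan is to obtain both expansions by Taylor-expanding the relevant Weyl components along the null geodesics generating $C_p$, exploiting that $L$ is parallel ($\nabla^N_L L = 0$) while the remaining frame vectors evolve according to the structure equations \eqref{tangent_covariant}. For a component of the form $F(r) = W(L, Z_1, L, Z_2)$ with $Z_1, Z_2 \in \{\partial_a, \underline L\}$, differentiating along $L$ gives $\frac{d}{dr}F = (\nabla^N_L W)(L, Z_1, L, Z_2) + W(L, \nabla^N_L Z_1, L, Z_2) + W(L, Z_1, L, \nabla^N_L Z_2)$, where the terms with $\nabla^N_L L$ drop out and the frame derivatives are read off from $\nabla^N_L \partial_a = -l_a^c \partial_c - \eta_a L$ and $\nabla^N_L \underline L = -\eta^b \partial_b$. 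Applied to $f_a(r) := W_{LaL\underline L}$ and $g(r) := W_{L\underline L L\underline L}$, this yields the coupled transport system $f_a' = (\nabla^N_L W)(L, \partial_a, L, \underline L) - l_a^c f_c - \eta^b W_{LaLb}$ and $g' = (\nabla^N_L W)(L, \underline L, L, \underline L) - 2\eta^b f_b$.

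The two types of terms on the right-hand sides play separate roles. The contributions built from $\nabla^N_L W$ and $(\nabla^N_L)^2 W$, evaluated on the limiting frame, are by definition of the symbol $D$ precisely $D\beta_a, D^2\beta_a, D\rho$, and $D^2\rho$; Taylor's theorem then supplies their numerical coefficients (the factors $\frac12$ being $1/2!$). The remaining terms encode the non-parallel evolution of the frame, and I would evaluate them using the leading expansions $l_a^c = -r^{-1}\delta_a^c + \frac13 r\, \bar R_{LabL}\tilde\sigma^{bc} + O(r^2)$ and $\eta_a = \frac13 r^2\, \bar R_{LaL\underline L} + O(r^3)$ from Lemma \ref{expansion_first}, together with the vacuum identity $\bar R_{LaL\underline L} = \beta_a$ (the cosmological correction drops because $g_{LL} = g_{aL} = 0$). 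The decisive point is that index-raising with $\sigma^{ab} = r^{-2}\tilde\sigma^{ab} + O(r^{-1})$ shifts the apparent order of a term: for instance $\eta^b = \frac13 \beta^b + O(r)$ is of order $1$ although $\eta_b = O(r^2)$. This is exactly what produces the quadratic curvature term in the second expansion, since $-2\eta^b f_b = -\frac23 r|\beta|^2 + O(r^2)$ integrates to $-\frac13 r^2|\beta|^2$.

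I would then integrate the system order by order, fixing the data at $p$ through the rescaled limits (so $f_a(0) = 0$ with $\lim_{r\to0} f_a/r = \beta_a$, and $g(0) = \rho$) rather than through ordinary initial values. The main obstacle is precisely this bookkeeping: the transport equations are \emph{singular} at $r = 0$, because $l_a^c$ blows up like $r^{-1}$ and $\sigma^{ab}$ like $r^{-2}$, so one must track carefully how each contraction redistributes powers of $r$. A secondary subtlety, and a useful consistency check, is that in the equation for $f_a$ the quadratic curvature terms (schematically $\bar R_{LabL}\beta^b$) arising from $-l_a^c f_c$ and $-\eta^b W_{LaLb}$ must cancel against the frame corrections concealed in $(\nabla^N_L W)(L, \partial_a, L, \underline L)$, leaving the clean expansion of $W_{LaL\underline L}$ with no quadratic term, whereas in $g$ the corresponding term survives. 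Finally, I would note that the cosmological constant never enters these particular components: in vacuum $R_{aLbL} = W_{aLbL}$ and $R_{aLL\underline L} = W_{aLL\underline L}$, and the transport coefficients $l_a^c, \eta_a$ involve only $\alpha_{ab}$ and $\beta_a$, so the computation reduces verbatim to the $\Lambda = 0$ vacuum case of \cite[Lemma 3.10]{Chen-Wang-Yau2}.
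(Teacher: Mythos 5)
Your proposal is essentially a correct reconstruction of the argument, but note that this paper contains no proof of the lemma at all: it is imported verbatim from \cite[Lemma 3.10]{Chen-Wang-Yau2}, and the only content added here is the observation that the statement survives the passage from $\Lambda=0$ to $\Lambda=-3$ --- which is exactly your closing remark ($R_{LabL}=W_{LabL}$ and $R_{LaL\underline L}=W_{LaL\underline L}$ since $g_{LL}=g_{aL}=0$, and the transport coefficients $l_a^c,\eta_a$ carry no $\kappa^2$ at the relevant orders). Your transport system $f_a'=(\nabla^N_LW)(L,\partial_a,L,\underline L)-l_a^cf_c-\eta^bW_{LaLb}$, $g'=(\nabla^N_LW)(L,\underline L,L,\underline L)-2\eta^bf_b$ is right, the singular integration fixed by rescaled limits rather than ordinary initial values is the correct bookkeeping (it is the same device used in the proof of Lemma \ref{data}, where $r^{-1}\partial_r(r\eta_a)=W_{LaL\underline L}+(l_a^b+r^{-1}\delta_a^b)\eta_b$ is integrated), and $-2\eta^bf_b=-\frac23r|\beta|^2+O(r^2)$ is indeed the sole source of the $-\frac13r^2|\beta|^2$ term.

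The one point you single out as delicate is, however, misattributed, and it is worth fixing since you call it a consistency check on the whole computation. In the $f_a$ equation the two frame terms cancel \emph{each other}, not corrections hidden inside $(\nabla^N_LW)(L,\partial_a,L,\underline L)$. Indeed, by Lemma \ref{expansion_first}, $-l_a^cf_c=r^{-1}f_a-\frac13r^2\bar R_{LabL}\beta^b+O(r^3)$, while $-\eta^bW_{LaLb}=+\frac13r^2\bar R_{LabL}\beta^b+O(r^3)$ because $W_{LaLb}=-W_{LabL}=-R_{LabL}$ in vacuum; the quadratic terms thus cancel identically, independently of which sign convention relates $\bar R_{LabL}$ to $\alpha_{ab}$. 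Correspondingly, the pullback $(\nabla_LW)_{LaL\underline L}$ is genuinely clean at the order you need: running your own transport argument one level up (on $\nabla_LW$ in place of $W$) gives $(\nabla_LW)_{LaL\underline L}=rD\beta_a+r^2D^2\beta_a+O(r^3)$ and $(\nabla_LW)_{L\underline LL\underline L}=D\rho+rD^2\rho+O(r^2)$, the frame corrections there being $O(r^3)$ and $O(r)\cdot O(r)$ respectively, hence invisible at the orders stated in the lemma. So the computation closes, and in fact more easily than you anticipate: there is no hidden compensation to track, only the direct cancellation above.
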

We are now ready to compute the expansion of the physical data:
\begin{lemma} \label{data}
We have the following expansions for  $\sigma^{ab} l_{ab}$, $\sigma^{ab} n_{ab}$ and $\eta_a$.
\begin{equation}
\sigma^{ab} l_{ab} = - \frac{2}{r} +\frac{  1}{45} r^3 |\alpha|^2 +O(r^4)
\end{equation}
\begin{equation}
\sigma^{ab} n_{ab} = \frac{1}{r} + r(\sigma^{ab} n_{ab})^{(1)}+r^2 (\sigma^{ab} n_{ab})^{(2)}+r^3(\sigma^{ab} n_{ab})^{(3)}+O(r^4)
\end{equation}
and
\begin{equation}\label{exp_eta} \eta_a = \frac{r^2}{3} \beta_a +\frac{r^3}{4}   D\beta_a +r^4 [\frac{1}{10}   D^2 \beta_a  -\frac{1}{45} \alpha_{ab} \beta^b ] +O(r^5), \end{equation}
where
\begin{equation}
\begin{split}
(\sigma^{ab} n_{ab})^{(1)}=&\rho +  \kappa^2 \\
(\sigma^{ab} n_{ab})^{(2)}=&\frac{2}{3}D\rho\\
(\sigma^{ab} n_{ab})^{(3)}=&   \frac{3}{8}D^2\rho+\frac{1}{30}|\alpha|^2- \frac{11}{45}|\beta|^2 .
\end{split}
\end{equation}
\end{lemma}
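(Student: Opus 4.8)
The plan is to integrate the structure equations \eqref{eq_ray}, \eqref{eq_eta} and \eqref{eq_trn} order by order in the affine parameter $r$, bootstrapping from the lower-order expansions already recorded in Lemma \ref{expansion_first}. Throughout I substitute the vacuum relations $\bar{Ric}(L,L)=0$ and $\bar{Ric}(L,\underline L)=3\kappa^2$ together with the null-frame decomposition of $R_{\alpha\beta\gamma\delta}=W_{\alpha\beta\gamma\delta}+\kappa^2(g_{\alpha\gamma}g_{\beta\delta}-g_{\alpha\delta}g_{\beta\gamma})$ and the Weyl expansions \eqref{exp_LaLN}. In particular $\bar R_{LabL}=-\alpha_{ab}$, $R_{LaL\underline L}=W_{LaL\underline L}$ and $R_{L\underline L L\underline L}=W_{L\underline L L\underline L}-\kappa^2$. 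The three expansions then decouple to a large extent: the trace $\sigma^{ab}l_{ab}$ satisfies a scalar Riccati-type equation, $\eta_a$ is driven by it and by $W_{LaL\underline L}$, and only the trace $\sigma^{ab}n_{ab}$ requires the full package.

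First I would treat $\sigma^{ab}l_{ab}$. Since $\bar{Ric}(L,L)=0$, equation \eqref{eq_ray} reduces to $\partial_r(\sigma^{ab}l_{ab})=\tfrac12(\sigma^{ab}l_{ab})^2+\hat l_a^b\hat l_b^a$, so the only new input is the shear norm. The one point needing care is that $\hat l_{ab}$ is \emph{not} simply the traceless part of the $r^3$ term of $l_{ab}$: in $\hat l_{ab}=l_{ab}-\tfrac12(\sigma^{cd}l_{cd})\sigma_{ab}$ the trace factor multiplies the subleading term of $\sigma_{ab}$ from \eqref{sigma}, and the two $r^3$ contributions combine to give $\hat l_{ab}=\tfrac13 r^3\bar R_{LabL}+O(r^4)=-\tfrac13 r^3\alpha_{ab}+O(r^4)$, hence $\hat l_a^b\hat l_b^a=\tfrac19 r^2|\alpha|^2+O(r^3)$. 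Feeding this into the scalar equation and matching coefficients kills the $r^0,r^1,r^2$ terms and yields the claimed $\tfrac{1}{45}r^3|\alpha|^2$. The expansion of $\eta_a$ is obtained the same way from \eqref{eq_eta}: substitute $W_{LaL\underline L}=r\beta_a+r^2D\beta_a+\tfrac12 r^3D^2\beta_a+O(r^4)$ and $l_a^b=-r^{-1}\delta_a^b-\tfrac13 r\alpha_a^b+O(r^2)$, posit $\eta_a=\sum_{k\ge 2}e_k r^k$, and match orders. The first two orders give $e_2=\tfrac13\beta_a$ and $e_3=\tfrac14 D\beta_a$, while at order $r^3$ the cross term $l_a^b\eta_b$ produces the $-\tfrac19\alpha_{ab}\beta^b$ responsible for the $-\tfrac{1}{45}\alpha_{ab}\beta^b$ in $e_4$.

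The real work is the trace $\sigma^{ab}n_{ab}$, and this is where I expect the main obstacle. Equation \eqref{eq_trn} requires its right-hand side through order $r^2$. The curvature contribution $\bar{Ric}(L,\underline L)+R_{L\underline L L\underline L}=2\kappa^2+\rho+rD\rho+r^2[\tfrac12 D^2\rho-\tfrac13|\beta|^2]+O(r^3)$ is immediate from \eqref{exp_LaLN}, and $|\eta|^2=\tfrac19 r^2|\beta|^2+O(r^3)$ is immediate from the $\eta$-expansion. The two genuinely laborious pieces are $div_\sigma\eta$ and $l^{ab}n_{ab}$. For the former one must expand the divergence of the $\eta$-series \emph{including} the connection and inverse-metric corrections at order $r^2$, and then convert the divergences of Weyl components using $\tilde\nabla^a\beta_a=3\rho$, $\tilde\nabla^a D\beta_a=4D\rho$, $\tilde\nabla^a D^2\beta_a=5D^2\rho$, $\tilde\nabla^a\alpha_{ab}=4\beta_b$ and the derivative formulas \eqref{Weyl_derivatives1}, so that everything is expressed through $\rho,D\rho,D^2\rho,|\alpha|^2,|\beta|^2$. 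For the latter I would use the trace--traceless splitting $l^{ab}n_{ab}=\tfrac12(\sigma^{ab}l_{ab})(\sigma^{cd}n_{cd})+\hat l^{ab}\hat n_{ab}$: the trace--trace piece is known from the bootstrap, but the shear--shear piece forces me to compute $\hat n_{ab}$ to order $r^3$ by integrating the traceless part of \eqref{eq_n}. Here I would use that the symmetric traceless part of $\bar W_{Lab\underline L}$ vanishes by \eqref{relations} (it is a pure-trace term plus the antisymmetric $\tfrac14\epsilon_{ab}\sigma$, and the $\kappa^2$ correction to $R_{Lab\underline L}$ is again pure trace), so $\hat n_{ab}$ is generated instead by the symmetric traceless parts of $\tilde\nabla_a\eta_b$ and of $l_b^c n_{ac}$, which must be carried along.

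Assembling all contributions and solving the resulting scalar equation order by order should produce the coefficients $\rho+\kappa^2$, $\tfrac23 D\rho$ and $\tfrac38 D^2\rho+\tfrac1{30}|\alpha|^2-\tfrac{11}{45}|\beta|^2$. The hard part will be the delicate bookkeeping at order $r^2$: the $D^2\rho$ contribution is split between the curvature term and the connection-corrected $div_\sigma\eta$, while the $|\beta|^2$ coefficient collects competing contributions from $R_{L\underline L L\underline L}$, from $div_\sigma\eta$, from $-|\eta|^2$, and from $\hat l^{ab}\hat n_{ab}$, and the $|\alpha|^2$ coefficient comes from $\tfrac12(\sigma^{ab}l_{ab})(\sigma^{cd}n_{cd})$, from $div_\sigma\eta$, and from the $-\tfrac12\alpha_{ab}$ piece of $\tilde\nabla_a\beta_b$ inside $\hat n_{ab}$. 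This is the step most likely to hide sign or combinatorial errors, and I would check it by verifying that the lower orders reproduce the non-vacuum formulas of Lemma \ref{non_physical_data} specialized to vacuum.
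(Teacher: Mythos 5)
Your plan for $\sigma^{ab}l_{ab}$ and for $\eta_a$ coincides with the paper's proof: the same observation that the traceless part must be taken against the expanded metric \eqref{sigma} (giving $\hat l_{ab}=-\frac{1}{3}r^3\alpha_{ab}+O(r^4)$), the same Riccati argument from \eqref{eq_ray}, and the same order-by-order integration of \eqref{eq_eta}. For $\sigma^{ab}n_{ab}$, your splitting $l^{ab}n_{ab}=\frac12(\sigma^{ab}l_{ab})(\sigma^{cd}n_{cd})+\hat l^{ab}\hat n_{ab}$ is algebraically equivalent to the paper's decomposition into $(l_{ab}+\frac{\sigma_{ab}}{r})\sigma^{ac}\sigma^{bd}(n_{cd}-\frac{\sigma_{cd}}{2r})$ plus trace terms, and your treatment of $div_\sigma \eta$ (inverse-metric and connection corrections, then the divergence identities) is the paper's.

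The gap is in your mechanism for $\hat n_{ab}$, which is precisely the input that controls the $|\alpha|^2$ coefficient. You assert that, since the symmetric traceless part of $\bar W_{Lab\underline L}$ vanishes, $\hat n_{ab}$ is ``generated instead by the symmetric traceless parts of $\tilde\nabla_a\eta_b$ and of $l_b^c n_{ac}$.'' Those two sources cancel identically at the relevant order: $\tilde\nabla_a\eta^{(2)}_b=\frac13\tilde\nabla_a\beta_b$ contributes $-\frac16 r^2\alpha_{ab}$, while $-(l_b^c+r^{-1}\delta_b^c)n_{ac}$ contributes $+\frac16 r^2\alpha_{ab}$. This cancellation is exactly why the paper finds $r\partial_r(r^{-1}n_{ab})=r^2(\rho+\kappa^2)\tilde\sigma_{ab}+O(r^3)$, i.e.\ $n_{ab}$ stays proportional to $\tilde\sigma_{ab}$ through order $r^3$. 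The nonvanishing $\hat n_{ab}=-\frac16 r^3\alpha_{ab}+O(r^4)$ has a different origin: the trace-free part is taken with respect to $\sigma_{ab}$, whose own $O(r^4)$ term $\frac13 r^4\alpha_{ab}$ is traceless; equivalently, $\partial_r$ does not commute with the $\sigma$-trace-free projection, and the commutator (coming from $\partial_r\sigma_{ab}=-2l_{ab}$) supplies the true source $(\sigma^{cd}n_{cd})\,\hat l_{ab}$. If one integrates ``the traceless part of \eqref{eq_n}'' keeping only the sources you name, one gets $\hat n_{ab}=O(r^4)$, the contribution $\hat l^{ab}\hat n_{ab}=\frac1{18}r^2|\alpha|^2$ is lost, and $(\sigma^{ab}n_{ab})^{(3)}$ comes out with $\frac{7}{360}|\alpha|^2$ instead of $\frac{1}{30}|\alpha|^2$. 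The paper sidesteps the pitfall by integrating the full tensor equation \eqref{eq_n} with no projection, and only afterwards forming $n_{cd}-\frac{\sigma_{cd}}{2r}$, so that the $-\frac16 r^3\alpha_{cd}$ appears automatically from \eqref{sigma}. A symptom of the same confusion in your outline: you list $\hat l^{ab}\hat n_{ab}$ among the sources of the $|\beta|^2$ coefficient, but both factors are proportional to $\alpha_{ab}$ at leading order, so it contributes only to $|\alpha|^2$. Note also that your proposed consistency check against Lemma \ref{non_physical_data} would not catch this error, since the non-vacuum expansion stops before any quadratic curvature terms enter.

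One further caution relevant to your closing remark about bookkeeping: assembling the paper's own displayed inputs (the expansion \eqref{exp_LaLN} of $W_{L\underline L L\underline L}$ and $(div_\sigma\eta)^{(2)}$ from \eqref{divergence_null_connection}) yields a $D^2\rho$ coefficient of $\frac14$ in $(\sigma^{ab}n_{ab})^{(3)}$ rather than the stated $\frac38$; a correct execution of your plan would give the same $\frac14$. This discrepancy is invisible in all later uses of the lemma because $\int_{S^2}D^2\rho\, dS^2=0$ by Lemma \ref{D_divergence}, but you should not expect to land exactly on the stated coefficient by following either route.
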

\begin{proof}

We  rewrite $l_{ab}$ as
\begin{align*}
l_{ab} =& -r \tilde \sigma_{ab} -\frac{2}{3}r^3\alpha_{ab}+ O(r^4)\\
=& (-r \tilde \sigma_{ab} -\frac{1}{3}r^3\alpha_{ab}) - \frac{1}{3}r^3\alpha_{ab}+ O(r^4).
 \end{align*}
Hence, $\hat l_{ab}$, the traceless part of $l_{ab}$, is given by
\begin{equation}
\hat l_{ab} =-\frac{1}{3}r^3\alpha_{ab} + O(r^4).
\end{equation}
It follows that
\begin{equation}  \sigma^{ab} l_{ab} = - \frac{2}{r} +\frac{  r^3}{45} |\alpha|^2    +O(r^4). \end{equation}

Next we compute $\eta_a$. Let
\[ \eta_a = r^2 \eta_a^{(2)} +r^3 \eta_a^{(3)} +r^4 \eta_a^{(4)} +O(r^5). \]
From Lemma \ref{expansion_first}, we have
\[  \eta_a = \frac{1}{3}r^2 \beta_a+O(r^3). \]
Equation \eqref{eq_eta} is equivalent to
\[r^{-1}\partial_r(r\eta_a)=W_{LaL\underline L}+(l_a^b+r^{-1}\delta_a^b)\eta_b.\]
By equation \eqref{exp_LaLN}, the right hand side can be expanded into
\[r \beta_a+r^2 D\beta_a+r^3[\frac{1}{2}D^2\beta_a-\frac{1}{9}\alpha_{ab}\beta^b]+O(r^4).\]
Integrating, we obtain
\begin{align*}
 \eta_a^{(3)}  = &\frac{1}{4}   D\beta_a  \\
  \eta_a^{(4)} = &\frac{1}{10}   D^2 \beta_a -\frac{1}{45}\alpha_{ab}\beta^b.
\end{align*}

For $n_{ab}$, we start with equation \eqref{eq_n}. It is equivalent to
\[
\begin{split}
r\partial_r (r^{-1} n_{ab}) = & R_{Lab\underline L}  - (l_b^c+r^{-1}\delta_b^c) n_{ac} +  \nabla_a  \eta_b - \eta_a\eta_b  \\
=  & W_{Lab\underline L} + \sigma_{ab} - (l_b^c+r^{-1}\delta_b^c) n_{ac} +  \nabla_a  \eta_b - \eta_a\eta_b.
\end{split}
\]
The equation becomes
\[
\begin{split}
  r\partial_r (r^{-1} n_{ab}) =&r^2[\bar W_{Lab\underline L} +  \kappa^2 \tilde \sigma_{ab}  -\frac{1}{6} \bar W_{LabL} +   \tilde \nabla _a  \eta^{(2)}_b]+O(r^3)\\
=&r^2 (\rho + \kappa^2)\tilde \sigma_{ab}+O(r^3).
\end{split}
\]
Integrating, we obtain
\[ n_{ab} =\frac{1}{2} r \tilde{\sigma}_{ab} + \frac{1}{2} r^3  (\rho + \kappa^2) \tilde \sigma_{ab} + O(r^4).  \]

Lastly, we deal with equation \eqref{eq_trn}  for $\sigma^{ab}n_{ab}$. We decompose
\begin{equation}
\begin{split}
l^{ab} n_{ab} = & l_{ab} \sigma^{ac} \sigma^{bd} n_{cd} \\
                     =& (l_{ab}+\frac{\sigma_{ab}}{r}) \sigma^{ac} \sigma^{bd} (n_{cd} - \frac{\sigma_{cd}}{2r}) + \frac{1}{2} r^{-1}\sigma^{ab} l_{ab} - r^{-1}\sigma^{ab}n_{ab} +  r^{-2} .       \\
                     \end{split}
\end{equation}
Thus equation \eqref{eq_trn} is equivalent to
\[r^{-1}\partial_r (r\sigma^{ab} n_{ab}) = r^{-2}+\frac{1}{2}r^{-1}\sigma^{ab} l_{ab}+  (l^{ab}+r^{-1}\sigma^{ab}) (n_{ab}-\frac{1}{2} r^{-1}\sigma_{ab})- \eta_a\eta^a + W_{L\underline LL\underline L} +2 \kappa^2+ div_{\sigma} \eta. \]
Notice that \[
\begin{split}
l_{ab}+r^{-a}\sigma_{ab}=&-\frac{1}{3}r^3\alpha_{ab}+O(r^4)\\
n_{cd}-\frac{1}{2} r^{-1}\sigma_{cd}
= &r^3( \frac{1}{2}(\rho + \kappa^2)\tilde \sigma_{ab}- \frac{1}{6}\alpha_{ab})+O(r^4).
\end{split}
\]
We have
\[\begin{split}
r^{-1}\partial_r (r \sigma^{ab} n_{ab})
= & r^ 2 \left [\frac{1}{15}|\alpha|^2-  \frac{1}{9} |\beta|^2   \right ]   + div_{\sigma} \eta+ W_{L\underline LL\underline L}+2 \kappa^2 +O(r^3).
\end{split}
\]Integrating this equation, we obtain the expansion for $\sigma^{ab}n_{ab}$.

\[
\begin{split}
\sigma^{ab} n_{ab} =& \frac{1}{r} + \frac{r}{2}[(\rho +2 \kappa^2+ (div_{\sigma} \eta)^{(0)}] + \frac{r^2}{3}[ D \rho+ (div_{\sigma} \eta)^{(1)}]+ \frac{r^3}{4} \Big [ \frac{1}{2} (D^2 \rho -\frac{2}{3} |\beta|^2) \\
&+ (div_{\sigma} \eta)^{(2)}-\frac{1}{9}|\beta|^2+ \frac{1}{15}  |\alpha|^2  \Big ] + O(r^4).
\end{split}
\]

We compute
\begin{equation}\label{divergence_null_connection}
\begin{split}
(div_{\sigma} \eta)^{(0)}=&\frac{1}{3}\tilde{\nabla}^a(
\beta_a)=\rho \\
(div_{\sigma} \eta)^{(1)}=&\frac{1}{4} \tilde{\nabla}^a D \beta_a = D \rho \\
(div_{\sigma} \eta)^{(2)}=&\frac{1}{10} \tilde{\nabla}^a  (D^2 \beta_a ) -\frac{2}{15}\tilde{\nabla}^a ( \alpha_{ab} \beta^b)\\
=&\frac{1}{2}  D^2\rho+\frac{1}{15}( |\alpha|^2- 8 |\beta|^2).
\end{split}
\end{equation}
which follows from the expansion of $\eta$ and the following expansion for $\gamma_{ab}^c$:
\begin{equation}\label{gamma} \gamma_{ab}^c = \tilde \gamma_{ab}^c +r^2 \gamma_{ab}^{(2)c}+O(r^3) \end{equation}
where $ \tilde \gamma_{ab}^c $ is the  Christoffel symbols for $\tilde \sigma_{ab}$ and $\gamma_{ab}^{(2)c}= -\frac{1}{6} \tilde\sigma^{cd}(\tilde \nabla_a \alpha_{db}+ \tilde \nabla_b \alpha_{ad} - \tilde \nabla_d \alpha_{ab})$.
\end{proof}
When we compute the small sphere limit in vacuum spacetimes, there are several functions and tensors on $S^2$ which appear repeatedly. These quantities are computed in \cite{Chen-Wang-Yau2}. We recall the results here. We define the functions $W_0$, $W_i$ and $P_k$ as follows:
\begin{equation}\label{W_P}
\begin{split}
W_0=&  \tilde X^i \tilde X^j \bar W_{0i0j} =\rho\\
W_i=  & \tilde X^j \tilde X^k \bar W_{0kij}=\frac{1}{2}(\beta^b-2\underline{\beta}^b)\tilde{\nabla}_b \tilde{X}^i \\
P_k= & \frac{1}{15}\bar W_{0i0k} \tilde X^i-\frac{1}{6}W_0\tilde X^k=-\frac{1}{30}(\beta^a+2\underline{\beta}^a)\tilde{\nabla}_a \tilde{X}^k-\frac{1}{10} \rho\tilde{X}^k\\
\end{split}
\end{equation}
$W_i$ are $-6$-eigenfunctions and $P_k$ are $-12$-eigenfunctions of the standard Laplacian on $S^2$.

Next, we introduce $R_{ij}$ and $S_j$. From the expansion of the induced metric $\sigma_{ab}$, we derive
\begin{equation}
\sigma^{(0) ab}=-\frac{1}{3}\alpha^{ab} \text{ and  }
\tilde{\sigma}^{ab} \gamma_{ab}^{(2)c} =-\frac{4}{3}  \beta^c.
\end{equation}

$R_{ij}$ and $S_j$ are defined as follows:
\begin{equation}\label{R_S}
\begin{split}
R_{ij}=&\sigma^{(0) ab}\tilde{X}^i_a \tilde{X}^j_b \\
= &  \frac{1}{3}[2\tilde X^i\tilde X^k \bar W_{0i0k}
 + 2\tilde {X}^j\tilde {X}^k \bar W_{0k0i} +  \tilde X^i\tilde X^j\tilde X^n(\bar W_{0inj}+\bar W_{0jni}) \\
& -2\bar W_{0i0j}- \rho \delta_{ij} -\rho \tilde X^i\tilde X^j-\tilde X^n(\bar W_{0inj}+\bar W_{0jni}) ]\\
S_j=&\tilde{\sigma}^{ab} \gamma_{ab}^{(2)c} \tilde{X}^j_c\\
= &\frac{1}{3}(-4   \bar W_{0j0n}\tilde X^n + 4 \tilde X^j W_0 + 4 W_j). 
\end{split}
\end{equation}


\section{Optimal embedding equation}
For the rest of the paper, we set $\kappa=1$. This corresponds to choosing $\Lambda=-3$ in the Einstein equation. In this section, we study the limiting behavior of the optimal embedding equation.
We consider the Ads space to be embedded in $\R^{3,2}$ and consider isometric embeddings of the form:
\[ Y= (Y_0,Y_i, Y_4) \]
where $Y_0$, $Y_i$ and $Y_4$ are the restriction of $y_0$ and $y_i$ and $y_4$ to the image of the isometric embedding.

Similar to the small sphere limit of the Wang--Yau quasi-local mass, we look for solutions of the optimal embedding equation $(Y,T_0)$ of the form
\begin{equation}\label{assume}
\begin{split}
Y_0  = & \sum_{i=3}^{\infty}Y_0^{(i)}r^i \\
Y_i = &  r \tilde X^i+ \sum_{k=3}^{\infty} Y_i^{(k)}r^k  \\
Y_4 =  & \sqrt{1 + \sum_i Y_i^2 - Y_0^2}
\end{split}
\end{equation}
and
\[
T_0=  A (y^0\frac{\partial}{\partial y^4}-y^4\frac{\partial}{\partial y^0} )- B_i( y^0\frac{\partial}{\partial y^i}+y^i\frac{\partial}{\partial y^0})- C_j (y^4\frac{\partial}{\partial y^j}+y^j\frac{\partial}{\partial y^4})+ D_p \epsilon_{pqr}y^q\frac{\partial}{\partial y^r}.
\]
\begin{remark}
As mentioned in Remark \ref{remark_2}, the quasi-local energy is equivariant and thus it is natural to consider the embedding of the above form and allow $T_0$ to be a general observer Killing field.
\end{remark}
In this section, we compute the leading order term of the optimal embedding equation and use it to determine $Y_i^{(3)}$ . While it is possible to also solve $Y_0^{(3)}$, the set of solutions is different for the vacuum case and the non-vacuum case. Hence, we defer the determination of $Y_0^{(3)}$ to  Section 8 for vacuum spacetimes since it will only be needed for the limit in the vacuum case.

Recall that the isometric embedding equation of a metric $\sigma$ into the AdS space in $\R^{3,2}$ is
\[
\sum_i \partial_a Y_i \partial_b Y_i - \partial_a Y_0\partial_b Y_0 - \partial_a Y_4\partial_b Y_4 = \sigma_{ab}.
\]
For the metric $\sigma_{ab}$ on $\Sigma_r$ given in \eqref{sigma} and the embedding $(Y_0, Y_i,Y_4)$ given in \eqref{assume}, the leading order term of the isometric embedding is the following system of linear equation on $Y_i^{(3)}$
\begin{equation} \label{linearized_iso_equ}
\partial_a \tilde X^i \partial_b Y_i^{(3)}  + \partial_b \tilde X^i  \partial_a Y_i^{(3)}   = -\frac{1}{3}\bar R_{LabL} =  \frac{1}{3} \alpha_{ab} - \frac{1}{6} \bar Ric(L,L) \tilde \sigma_{ab}
\end{equation}
\begin{lemma}
\label{yi3}
\[ \begin{split}Y^{(3)}_i=-\frac{1}{3}\beta^c \tilde{\nabla}_c \tilde{X}^i+\frac{1}{2} \rho\tilde{X}^i -\frac{1}{12} \bar Ric(L,L) \tilde X^i
 \end{split} \]
satisfies \eqref{linearized_iso_equ}.
\end{lemma}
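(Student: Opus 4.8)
The statement is a direct verification: substitute the proposed $Y^{(3)}_i$ into the left-hand side of \eqref{linearized_iso_equ} and confirm that it reproduces $\frac{1}{3}\alpha_{ab}-\frac{1}{6}\bar Ric(L,L)\tilde\sigma_{ab}$. The only external inputs I would need are three standard facts about the first eigenfunctions $\tilde X^i$ on the unit sphere, namely $\sum_i \partial_a\tilde X^i\,\partial_b\tilde X^i=\tilde\sigma_{ab}$ (the embedding $S^2\hookrightarrow\R^3$ is isometric), $\sum_i \tilde X^i\,\partial_a\tilde X^i=\frac12\partial_a(\sum_i(\tilde X^i)^2)=0$, and the Hessian identity $\tilde\nabla_a\tilde\nabla_b\tilde X^i=-\tilde\sigma_{ab}\tilde X^i$, together with the relation for $\tilde\nabla_a\beta_b$ recorded in \eqref{Weyl_derivatives1}.

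First I would dispose of the two terms of $Y^{(3)}_i$ that are scalar multiples of $\tilde X^i$. For any function $c$ on $S^2$ one has $\sum_i \partial_a\tilde X^i\,\partial_b(c\,\tilde X^i)=(\partial_b c)\sum_i\tilde X^i\partial_a\tilde X^i + c\sum_i\partial_a\tilde X^i\partial_b\tilde X^i=c\,\tilde\sigma_{ab}$, using the first two identities above; symmetrizing in $a,b$ gives $2c\,\tilde\sigma_{ab}$. Applying this with $c=\frac12\rho$ and with $c=-\frac1{12}\bar Ric(L,L)$ produces the contributions $\rho\,\tilde\sigma_{ab}$ and $-\frac16\bar Ric(L,L)\tilde\sigma_{ab}$. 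The second of these already matches the $\bar Ric(L,L)$ term on the right-hand side, so it remains to show that the $\beta$-term of $Y^{(3)}_i$ together with the leftover $\rho\,\tilde\sigma_{ab}$ assembles to $\frac13\alpha_{ab}$.

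For the term $-\frac13\beta^c\tilde\nabla_c\tilde X^i$ I would differentiate, commuting the derivative onto the Hessian: $\partial_b\big(-\frac13\beta^c\tilde\nabla_c\tilde X^i\big)=-\frac13\big[(\tilde\nabla_b\beta^c)\tilde\nabla_c\tilde X^i-\beta_b\tilde X^i\big]$, where I used $\tilde\nabla_b\tilde\nabla_c\tilde X^i=-\tilde\sigma_{bc}\tilde X^i$. Contracting against $\partial_a\tilde X^i$ and summing over $i$, the $\beta_b\tilde X^i$ piece drops out by $\sum_i\tilde X^i\partial_a\tilde X^i=0$, and the remaining piece collapses via $\sum_i\tilde\nabla_a\tilde X^i\tilde\nabla_c\tilde X^i=\tilde\sigma_{ac}$ to $-\frac13\tilde\nabla_b\beta_a$. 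Symmetrizing gives $-\frac13(\tilde\nabla_a\beta_b+\tilde\nabla_b\beta_a)$, and here I would invoke \eqref{Weyl_derivatives1}: since $\tilde\nabla_a\beta_b=-\frac34\sigma\epsilon_{ab}+\frac32\rho\tilde\sigma_{ab}-\frac12\alpha_{ab}$, the antisymmetric $\sigma\epsilon_{ab}$ part cancels under symmetrization, leaving $3\rho\tilde\sigma_{ab}-\alpha_{ab}$, hence a contribution $-\rho\tilde\sigma_{ab}+\frac13\alpha_{ab}$. Adding this to the $\rho\,\tilde\sigma_{ab}$ from the previous paragraph cancels the $\rho$ terms and yields exactly $\frac13\alpha_{ab}$, completing the verification.

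There is no serious analytic obstacle here; the lemma is a computation, and the ansatz is essentially forced. The only points that require care — and the places where an error would most easily creep in — are the bookkeeping of the symmetrization (which is precisely what kills the $\sigma\epsilon_{ab}$ term, explaining why no $\sigma$-dependence survives) and the exact cancellation of the $\rho\,\tilde\sigma_{ab}$ terms between the $\beta$-part and the $\frac12\rho\tilde X^i$ part of $Y^{(3)}_i$. It is worth emphasizing that the traceless curvature $\alpha_{ab}$ on the right-hand side is produced entirely by the non-radial term $-\frac13\beta^c\tilde\nabla_c\tilde X^i$ via \eqref{Weyl_derivatives1}, so the structure of the ansatz is dictated by the need to generate $\alpha_{ab}$ out of a divergence-type expression in $\beta$.
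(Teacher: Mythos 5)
Your verification is correct, and each step checks: the scalar-multiple identity applied to $c=\tfrac12\rho$ and $c=-\tfrac1{12}\bar Ric(L,L)$ yields $\rho\,\tilde\sigma_{ab}$ and $-\tfrac16\bar Ric(L,L)\tilde\sigma_{ab}$, the $\beta$-term collapses to $-\tfrac13(\tilde\nabla_a\beta_b+\tilde\nabla_b\beta_a)$ via the Hessian identity and $\sum_i\tilde X^i\partial_a\tilde X^i=0$, and \eqref{Weyl_derivatives1} then gives $\tfrac13\alpha_{ab}-\rho\,\tilde\sigma_{ab}$, so the $\rho$ terms cancel and the right-hand side of \eqref{linearized_iso_equ} is reproduced exactly. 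The difference from the paper is one of self-containedness rather than of strategy: the paper uses the same splitting of $Y^{(3)}_i$ into a Weyl piece and a Ricci piece, disposes of the $\bar Ric(L,L)\tilde X^i$ piece by the same one-line computation, but handles the piece $-\tfrac13\beta^c\tilde\nabla_c\tilde X^i+\tfrac12\rho\tilde X^i$ by citing Lemma 6.1 of \cite{Chen-Wang-Yau3}, whereas you re-derive that cited statement from \eqref{Weyl_derivatives1} and the standard facts about the first eigenfunctions. What your route buys is a proof readable without consulting \cite{Chen-Wang-Yau3}; what it costs is an explicit dependence on \eqref{Weyl_derivatives1}, which this paper states only in the subsection on vacuum spacetimes, while Lemma \ref{yi3} is also needed for the non-vacuum limit in Section 7. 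To make your argument airtight in both settings you should add one sentence: the relation $\tilde\nabla_a\beta_b=-\tfrac34\sigma\epsilon_{ab}+\tfrac32\rho\tilde\sigma_{ab}-\tfrac12\alpha_{ab}$, and the relations \eqref{relations} underlying it, involve only the Weyl components $\alpha,\beta,\rho,\sigma$ and are algebraic consequences of the trace-free property and the first Bianchi identity of the Weyl tensor contracted with the $\tilde X$-dependent null frame at $p$; they therefore hold with no vacuum hypothesis, which is what legitimizes both your computation and the paper's citation in the non-vacuum case.
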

\begin{proof}
From Lemma 6.1 of \cite{Chen-Wang-Yau3}, we know that
\[ \sum_{i}\partial_a \tilde X^i \partial_b (-\frac{1}{3}\beta^c \tilde{\nabla}_c \tilde{X}^i+\frac{1}{2} \rho\tilde{X}^i ) + \partial_b \tilde X^i  \partial_a (-\frac{1}{3}\beta^c \tilde{\nabla}_c \tilde{X}^i+\frac{1}{2} \rho\tilde{X}^i )   =  \frac{1}{3} \alpha_{ab}.
 \]
On the other hand, it is easy to see that
\[  
\sum_{i} \partial_a \tilde X^i \partial_b ( \bar Ric(L,L) \tilde X^i) + \partial_b \tilde X^i  \partial_a ( \bar Ric(L,L) \tilde X^i) = 2 \bar Ric(L,L) \tilde \sigma_{ab}.
 \]
 This finishes the proof of the lemma.
\end{proof}
The data on $\Sigma_r$ admit the following expansion
\[
\begin{split}
\sigma_{ab} =& r^2 \tilde \sigma_{ab} + r^4\tilde \sigma^{(4)}_{ab} + O(r^5)\\
|H| = & \frac{2}{r} + h^{(1)} r + O(r^2)\\
(\alpha_H )_a= & (\alpha_H^{(2)})_a r^2 + O(r^3).
\end{split}
\]
Similarly. the data on the image of the isometric embedding admit the following expansion
\[
\begin{split}
|H_0| = & \frac{2}{r} + h_0^{(1)} r + O(r^2)\\
(\alpha_{H_0} )_a= & (\alpha^{(2)}_{H_0})_a r^2 + O(r^3).
\end{split}
\]
It follows that the quasi-local energy density $f$ admits the following expansion:
\[
f = f^{(1)} r + O(r^2)
\]
where $f^{(1)}= \frac{h_0^{(1)}-  h^{(1)}}{A}$. In the following lemma, we derive the leading order term of the Euler--Lagrange equation for the quasi-local energy.
\begin{lemma} \label{optimal_eq_leading}
For the observer \[
T_0=  A (y^0\frac{\partial}{\partial y^4}-y^4\frac{\partial}{\partial y^0} )- B_i( y^0\frac{\partial}{\partial y^i}+y^i\frac{\partial}{\partial y^0})- C_j (y^4\frac{\partial}{\partial y^j}+y^j\frac{\partial}{\partial y^4})+ D_p \epsilon_{pqr}y^q\frac{\partial}{\partial y^r}
\]
and an isometric embedding of the form \eqref{assume}, the leading order term of the Euler--Lagrange equation is the following equation on $(A,C^i)$ and  $Y_0^{(3)}$
\[
    \frac{1}{2}  \tilde \Delta  (  \tilde \Delta + 2 )  Y_0^{(3)} =  \tilde \nabla^a (\alpha_H^{(2)})_a  + \tilde \nabla^a ( f^{(1)} \tilde \nabla_a C_i\tilde X^i )+ \frac{1}{2}  \tilde \Delta  (f^{(1)} C_i\tilde X^i ).
\]
\end{lemma}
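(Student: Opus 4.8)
The plan is to start from the first variation of the quasi-local energy recorded in \cite[Theorem 5.4]{Chen-Wang-Yau3}. At a critical point the first variation must vanish against every admissible variation of the embedding compatible with the isometric embedding constraint; since Lemma \ref{yi3} already fixes $Y_i^{(3)}$, the only remaining freedom at this order is the time component $Y_0^{(3)}$, so the Euler--Lagrange condition becomes an identity in $Y_0^{(3)}$. Concretely, the optimal embedding equation takes the schematic divergence form $\mathrm{div}_\sigma(\text{momentum-type }1\text{-form}) = (\text{lower order})$, in which the momentum density $j$ of \eqref{j_momentum}, the static potential $V$, the energy density $f$, and the two mean curvatures all enter. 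My first task is therefore to write this equation explicitly and isolate the pieces that survive at leading order in $r$.

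Next I would insert the expansions. Because the surface sits at AdS-radius $O(r)$, the static potential is $V = 1 + O(r^2)$, so $V$ and $V^2$ may be set to $1$ at leading order; the induced metric is $r^2\tilde\sigma_{ab} + O(r^4)$, so that after the uniform rescaling of the divergence identity $\mathrm{div}_\sigma$ and $\nabla$ reduce to $\tilde\nabla^a$ and $\tilde\nabla$ on the unit sphere. The key structural point is to compute the tangential part $T_0^\top$ of the observer along $Y(\Sigma)$: evaluating the Killing field \eqref{Killing} at $Y = (O(r^3),\ r\tilde X^i + O(r^3),\ 1 + O(r^2))$ shows that the boost vector $\vec C$ supplies a leading tangential piece whose associated potential is the first-eigenfunction combination $C_i\tilde X^i$, while the time-tilt $Y_0^{(3)}$ of the embedding contributes through the tangential projection of the timelike part $-A\,\partial/\partial y^0$. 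Thus the effective time data at order $r^3$ is governed by $Y_0^{(3)}$ and $C_i\tilde X^i$ together, which is precisely why the resulting equation couples $(A,C^i)$ to $Y_0^{(3)}$.

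With $T_0^\top$ so identified, I would substitute $f = f^{(1)} r + O(r^2)$, the physical connection form $(\alpha_H)_a = (\alpha_H^{(2)})_a r^2 + O(r^3)$ from Lemma \ref{non_physical_data}, and the corresponding reference expansions into the divergence identity. The term $\mathrm{div}_\sigma(f\,V^2\,\nabla\tau)$ produces $\tilde\nabla^a(f^{(1)}\tilde\nabla_a C_i\tilde X^i)$, together with the companion term $\tfrac{1}{2}\tilde\Delta(f^{(1)}C_i\tilde X^i)$ arising from the remaining ($\sinh^{-1}$ and reference-momentum) pieces of $j$; the physical connection form yields $\tilde\nabla^a(\alpha_H^{(2)})_a$. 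These are exactly the three terms on the right-hand side, and the fact that $C_i\tilde X^i$ is a $(-2)$-eigenfunction while $f^{(1)}$ is not constant explains why these coupling terms do not collapse.

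The main obstacle, and the heart of the computation, is the left-hand side: the linearization of the reference data $(|H_0|,\alpha_{H_0})$ of the isometrically embedded surface in AdS under the time-perturbation $Y_0^{(3)}$. One must linearize the mean curvature and the normal connection of the round coordinate sphere in the $t=0$ slice against a graph-in-time deformation, retaining the coupling to the static potential $V$, and then take the divergence. The round-sphere spectral geometry produces the conformal operator $\tilde\Delta + 2$ from the linearized mean-curvature/connection response, and the outer divergence supplies the remaining $\tilde\Delta$, giving the fourth-order operator $\tfrac{1}{2}\tilde\Delta(\tilde\Delta+2)Y_0^{(3)}$ together with its coefficient. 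Verifying this coefficient and confirming that all order-$r$ contributions assemble into the stated equation is where the care is needed; the enlarged kernel of the AdS optimal embedding operator flagged in the introduction is exactly the reflection of $\tilde\Delta + 2$ annihilating the first eigenfunctions $\tilde X^i$.
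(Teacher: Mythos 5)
Your route is the paper's route: the first variation from \cite[Theorem 5.4]{Chen-Wang-Yau3}, reduction at leading order to the vanishing of the $O(1)$ part of a divergence expression, expansion of $T_0^\top$ (whose leading potential is $rC_i\tilde X^i$), and the identification $div\,\alpha_{H_0}=\frac{1}{2}\tilde\Delta(\tilde\Delta+2)Y_0^{(3)}$ obtained by linearizing the reference data off the hyperbolic slice. However, there are two concrete defects. The first is your claim that ``the static potential is $V=1+O(r^2)$, so $V$ and $V^2$ may be set to $1$.'' This is false for the general observer the lemma is about: by \eqref{interpret}, the $V$ entering the energy and its variation is defined by $V^2=-\langle T_0,T_0\rangle$ restricted to $Y(\Sigma_r)$, and for $T_0=(A,\vec{B},\vec{C},\vec{D})$ this equals $(A^2-\sum_i C_i^2)+O(r)$; it reduces to the chart potential $\sqrt{1+r^2}$ only for the standard $\frac{\partial}{\partial t}$. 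Your final equation survives this error only because of a cancellation you never invoke: at leading order $V^2$ is the positive constant $A^2-|\vec{C}|^2$, so it multiplies every term of the $O(1)$ divergence expression uniformly and drops out of the equation ``$=0$.'' The inconsistency shows up in your own ingredients: you substitute the expansion $f=f^{(1)}r+O(r^2)$ with $f^{(1)}=(h_0^{(1)}-h^{(1)})/A$, but the factor $1/A$ is produced precisely by the correct $V$ in \eqref{rho}; computing $f$ with $V=1$ gives instead $(h_0^{(1)}-h^{(1)})/\sqrt{1+|\vec{C}|^2}$, which by Proposition \ref{observer_constraint} agrees with the paper's $f^{(1)}$ only when $|\vec{B}|=|\vec{D}|$. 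So you must either keep the true $V^2=A^2-|\vec{C}|^2+O(r)$ and note the cancellation explicitly, or your coefficients come out wrong.

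The second defect is that the first variation \eqref{first_variation} contains a second integral, against $\delta Y^i\bar\nabla_i V$, and the induced $\delta Y^i$ is coupled to $\delta\tau$ through the linearized isometric embedding equation; it is not zero, so asserting that ``the only remaining freedom at this order is $Y_0^{(3)}$'' does not dispose of it. The paper dispatches it quantitatively: the bracket multiplying $\delta\tau$ is $O(1)$, the bracket multiplying $\delta Y^i\bar\nabla_i V$ is $O(r)$, and the linearized isometric embedding equation forces $\delta Y^i=O(r^2)$; hence the second integral is of strictly higher order and the leading Euler--Lagrange equation is exactly the vanishing of the $O(1)$ term of $div\bigl[V^2\nabla\sinh^{-1}\frac{f\,div(V^2\nabla\tau)}{|H_0||H|}-fV^4\nabla\tau+V^2(\alpha_{H_0}-\alpha_H)\bigr]$. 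Without an estimate of this kind, your reduction of the critical-point condition to the single divergence-form equation is unjustified. Your plan for the left-hand side (direct linearization of $(|H_0|,\alpha_{H_0})$ under the graph-in-time perturbation $Y_0^{(3)}$, with $\tilde\Delta+2$ from the round-sphere response and the outer $\tilde\Delta$ from the divergence) is sound and is the same computation the paper imports from the proof of Theorem 6.2 of \cite{Chen-Wang-Yau3}.
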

\begin{proof}
From  \cite[Theorem 5.4 ]{Chen-Wang-Yau3}, the first variation of the quasi-local energy (up to a factor of $8 \pi$) is
\begin{equation}\label{first_variation}
\begin{split}
&  \int_{\Sigma} (\delta \tau)   div\left [  V ^2 \nabla \sinh^{-1} \frac{f div (V ^2 \nabla \tau)}{|H_0||H|}  - f V ^4 \nabla \tau +V ^2(\alpha_{H_0} - \alpha_H) \right ]   d \Sigma \\
 & + \int_{\Sigma} \delta Y^i \bar\nabla_i V \left [ f  V (1+ 2 V ^2|\nabla \tau|^2)  -2  V \nabla \tau \nabla \sinh^{-1} \frac{f div (V ^2 \nabla \tau)}{|H_0||H|}  + (\alpha_{H} - \alpha_{H_0})(2 V \nabla \tau)  \right ]   d \Sigma
\end{split}
\end{equation}
where $\delta \tau$ and $\delta Y^i $ are coupled through the isometric embedding equation. Recall that
\[
\begin{split}
V ^2 = & - \langle T_0, T_0 \rangle  = (A^2 - \sum_i C_i^2) + O(r^2)\\
 V ^2 \nabla_a \tau = &-(T_0^T)_a = r C_i \tilde \nabla_a \tilde X^i + O(r^2).
 \end{split}
\]
As a result,
\[
div\left [  V ^2 \nabla \sinh^{-1} \frac{f div (V ^2 \nabla \tau)}{|H_0||H|}  - f V ^4 \nabla \tau +V ^2(\alpha_{H_0} - \alpha_H) \right ]  = O(1)
\]
while
\[
 f  V (1+ 2 V ^2|\nabla \tau|^2)  -2  V \nabla \tau \nabla \sinh^{-1} \frac{f div (V ^2 \nabla \tau)}{|H_0||H|}  + (\alpha_{H} - \alpha_{H_0})(2 V \nabla \tau)   = O(r).
\]
Moreover, from the linearized isometric embedding equation, we conclude that
\[  \delta Y^i = O(r^2)  . \]
As a result, the second integral in \eqref{first_variation} is of higher order in $r$ compared to the first integral and the leading order term of the Euler--Lagrange equation is simply that the O(1) term of
\[
div\left [  V ^2 \nabla \sinh^{-1} \frac{f div (V ^2 \nabla \tau)}{|H_0||H|}  - f V ^4 \nabla \tau +V ^2(\alpha_{H_0} - \alpha_H) \right ]
\]
is equal to 0. From the formula in the proof of Theorem 6.2 of \cite{Chen-Wang-Yau3}, we get  
\begin{equation} \label{div_alpha} div \alpha_{H_0}  =  \frac{1}{2}  \tilde \Delta  (  \tilde \Delta + 2 )  Y_0^{(3)}\end{equation}
by treating the image of the isometric embedding as a small perturbation of the isometric embedding into the hyperbolic space. The lemma follows from collecting terms.
\end{proof}
\section{Small sphere limit of the quasi-local energy in spacetimes with matters}
In this section, we compute the small sphere limit of the quasi-local energy and show that it recovers the matter field at $p$. More precisely, we show the following:
\begin{theorem} \label{thm_small_non}
Let $\Sigma_r$ be the family of spheres approaching $p$ constructed in Section 4 and $\mathcal P$ denote the set of $(Y,T_0)$ admitting a power series expansion given in equation \eqref{assume}.
\begin{enumerate}
\item 
For any pair $(Y,T_0)$ in $\mathcal P$, we have
\[ \lim_{r \to 0} r^{-3}E(\Sigma_r, Y_r,T_0) = \frac{4\pi}{3}T(e_0, A e_0 + C_i e_i), \] where $T(\cdot, \cdot)$ is the stress-energy  tensor at $p$
\item Suppose $T(e_0, \cdot) $ is dual to a future directed timelike vector $W$ at $p$.  We have 
\[ \inf_{(Y,T_0) \in \mathcal P} \lim_{r \to 0} r^{-3} E(\Sigma_r, Y_r,T_0) = \frac{4\pi}{3} \sqrt{-\langle W, W\rangle}. \] 
The infimum is achieved by a unique $(A,C_i)$.
\end{enumerate}
\end{theorem}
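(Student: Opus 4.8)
The plan is to evaluate the limit directly from \eqref{qlcq2}, which writes $8\pi E(\Sigma,Y,T_0)=-\int_\Sigma[\langle T_0,T_0\rangle f+j(T_0^\top)]\,d\Sigma$ in terms of the densities $f$ and $j$, and then to settle part (2) by a Lorentzian Cauchy--Schwarz argument. First I would record the orders of all ingredients on $\Sigma_r$: from \eqref{sigma} one has $d\Sigma=r^2\,d\tilde\Sigma\,(1+O(r^2))$; from the proof of Lemma \ref{optimal_eq_leading}, $\langle T_0,T_0\rangle=-(A^2-|\vec C|^2)+O(r^2)$ and $(T_0^\top)^a=-r^{-1}C_i\tilde\nabla^a\tilde X^i+O(1)$; and from the data expansions, $f=f^{(1)}r+O(r^2)$ with $f^{(1)}=(h_0^{(1)}-h^{(1)})/A$. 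A term-by-term inspection of \eqref{j_momentum} then shows that $\langle T_0,T_0\rangle f$ and $j(T_0^\top)$ are both $O(r)$, so $E(\Sigma_r,Y_r,T_0)=O(r^3)$ and
\[
\lim_{r\to0}r^{-3}E = -\frac{1}{8\pi}\int_{S^2}\Big[-(A^2-|\vec C|^2)f^{(1)}+\big(j(T_0^\top)\big)^{(1)}\Big]\,d\tilde\Sigma,
\]
where $(\,\cdot\,)^{(1)}$ denotes the coefficient of $r^1$. Since only $(A,C_i)$ appear at this order, I expect the limit to depend on $(Y,T_0)\in\mathcal P$ only through $(A,C_i)$.

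Second, I would compute the two $r$-coefficients. The physical quantities $h^{(1)}$ and $(\alpha_H^{(2)})_a$ are read off from Lemma \ref{non_physical_data}. The reference quantities $h_0^{(1)}$ and $(\alpha_{H_0}^{(2)})_a$ must be extracted from the isometric embedding \eqref{assume}: $|H_0|$ from the AdS Gauss equation using $Y_i^{(3)}$ of Lemma \ref{yi3}, and $\alpha_{H_0}$ through the relation \eqref{div_alpha}, namely $div\,\alpha_{H_0}=\tfrac12\tilde\Delta(\tilde\Delta+2)Y_0^{(3)}$. The essential observation for part (1) is that the limit is \emph{independent} of the undetermined higher embedding data. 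Indeed, $\alpha_{H_0}$ enters the limit only through $\int_{S^2}(\alpha_{H_0}^{(2)})_a\,\tilde\nabla^a\tilde X^i\,d\tilde\Sigma$; writing the $Y_0^{(3)}$-dependent part of $(\alpha_{H_0}^{(2)})_a$ as the exact form determined by \eqref{div_alpha} and integrating by parts yields a multiple of $\int_{S^2}Y_0^{(3)}(\tilde\Delta+2)\tilde X^i\,d\tilde\Sigma$, which vanishes because the $\tilde X^i$ are $(-2)$-eigenfunctions of $\tilde\Delta$. Coefficients $Y^{(k)}$ with $k\ge4$ are of too high an order to contribute. This is what makes the assertion ``for any $(Y,T_0)\in\mathcal P$'' meaningful.

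Third, I would assemble the angular integral. Substituting the physical curvature components via the Einstein equation $R_{\alpha\beta}-\tfrac{R}{2}g_{\alpha\beta}+\Lambda g_{\alpha\beta}=8\pi T_{\alpha\beta}$ with $\Lambda=-3$, and using that the AdS reference shares the Ricci curvature $-3g_{\alpha\beta}$ of the $T=0$ solution, the pure background and the $\Lambda$ contributions carried by $h_0^{(1)}$ and $\alpha_{H_0}$ cancel against their physical counterparts, leaving exactly $8\pi T$ evaluated at $p$ along $L=e_0+\tilde X^ie_i$ and $\underline L$. Performing the $S^2$ integrals with $\int_{S^2}\tilde X^i\tilde X^j\,d\tilde\Sigma=\tfrac{4\pi}{3}\delta_{ij}$, $\int_{S^2}\tilde X^i\,d\tilde\Sigma=0$, and $\int_{S^2}d\tilde\Sigma=4\pi$ then collapses the expression to $\tfrac{4\pi}{3}\big(A\,T(e_0,e_0)+C_i\,T(e_0,e_i)\big)=\tfrac{4\pi}{3}T(e_0,Ae_0+C_ie_i)$, which is (1). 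I expect this curvature bookkeeping to be the main obstacle, and within it the most delicate step is the explicit evaluation of the AdS reference mean curvature $h_0^{(1)}$, where the static potential $V$ couples into the embedding.

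Finally, for (2) I would reduce to an optimization over the vector $V:=Ae_0+C_ie_i$. By \eqref{obser_inequlaity} every observer Killing field obeys $A\ge\sqrt{1+|\vec C|^2}$, i.e. $-\langle V,V\rangle=A^2-|\vec C|^2\ge1$; conversely every future timelike $V$ with $-\langle V,V\rangle\ge1$ is realized, the boundary case $A=\sqrt{1+|\vec C|^2}$ by $\vec B=\vec D=0$. Writing $T(e_0,V)=-\langle W,V\rangle$ with $W$ the future timelike dual (so that the measured energy is nonnegative), the reverse Cauchy--Schwarz inequality for future timelike vectors gives
\[
T(e_0,V)=-\langle W,V\rangle\ \ge\ \sqrt{-\langle W,W\rangle}\,\sqrt{-\langle V,V\rangle}\ \ge\ \sqrt{-\langle W,W\rangle},
\]
with equality in the first step iff $V$ is parallel to $W$ and in the second iff $-\langle V,V\rangle=1$. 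Hence $\inf_{(Y,T_0)\in\mathcal P}\lim_{r\to0}r^{-3}E=\tfrac{4\pi}{3}\sqrt{-\langle W,W\rangle}$, attained exactly at $V=W/\sqrt{-\langle W,W\rangle}$, which fixes $(A,C_i)$ uniquely via $A=-\langle V,e_0\rangle$ and $C_i=\langle V,e_i\rangle$.
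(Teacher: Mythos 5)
Your proposal is correct and follows essentially the same route as the paper: the same order counting that reduces the limit to the $r$-coefficients governed by $f^{(1)}$ and the two connection-form terms, the same use of \eqref{div_alpha} together with $(\tilde \Delta + 2)\tilde X^i = 0$ to show the reference term $\alpha_{H_0}$ drops out independently of $Y_0^{(3)}$, the same Gauss-equation identification of $h_0^{(1)}$ with intrinsic data, and the same Einstein-equation substitution (with the $\Lambda$-term cancelling the hyperbolic $-1$) producing the stress-energy tensor. For part (2) the paper likewise restricts to the boundary set $A = \sqrt{1+|\vec{C}|^2}$ using \eqref{obser_inequlaity} and monotonicity in $A$; your reverse Cauchy--Schwarz step is simply an explicit completion of the paper's ``from here the Theorem easily follows.''
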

\begin{proof}
The quasi-local energy is 
\[
 \frac{1}{8\pi}\int_\Sigma \left[f(V ^2+V ^4|\nabla\tau|^2)+ div(V ^2 \nabla \tau) \sinh^{-1}(\frac{f div(V ^2 \nabla \tau) }{|H_0| |H|})-\alpha_{H_0}(V ^2 \nabla \tau)+\alpha_H(V ^2 \nabla \tau)\right] d\Sigma 
\]
where
\[
\begin{split}
f =&  \frac{h_0^{(1)} -  h^{(1)} }{A}r + O(r^2)\\
V ^2 =& A^2 - \sum_i C_i^2   + O(r)\\
 V ^2 \nabla_a \tau = & r C_i \tilde \nabla_a \tilde X^i + O(r^2).
\end{split}
\]
It is easy to see that
\[
E(\Sigma, Y, T_0) = (A e + C_i p^i) r^3 + O(r^4)
\]
where
\[ e = \frac{1}{8 \pi}  \int_{\Sigma_r}(h_0^{(1)}-h^{(1)}) dS^2 \]
\[ p^i = \frac{1}{8 \pi}  \int_{\Sigma_r} \tilde X^i  \tilde \nabla^a (\alpha_H^{(2)} -\alpha_{H_0}^{(2)})_a   dS^2.\]
It suffices to show that
\begin{align}
\label{limit_matter_energy} \frac{1}{8 \pi}  \int_{\Sigma_r}(h_0^{(1)}-h^{(1)}) dS^2 =& \frac{4\pi}{3}T(e_0, e_0)\\
\label{limit_matter_momentum} \frac{1}{8 \pi}  \int_{\Sigma_r} \tilde X^i  \tilde \nabla^a (\alpha_H^{(2)} -\alpha_{H_0}^{(2)})_a   dS^2 = &  \frac{4\pi}{3}T(e_0,  e_i)
\end{align}
For \eqref{limit_matter_energy}, we consider the Gauss curvature $K$ of $\Sigma_r$. It admits the following expansion
\[
2 \sqrt{K} = \frac{2}{r} + k^{(1)} r +O(r^2)
\]
From Lemma 4.1 of \cite{Chen-Wang-Yau3}, we have
\[
 \frac{1}{8 \pi}  \int_{\Sigma_r}(k^{(1)}-h^{(1)}) dS^2 = \frac{4 \pi}{3} (\bar Ric (e_0,e_0) + \frac{1}{2}\bar R )
 \]
On the other hand, from the Gauss equation of image of the isometric embedding of $\Sigma_r$ in the hyperbolic space, we have
\[
K= -1 +\frac{1}{4} |H_0|^2 + O(r^2)
\]
and thus
\[
k^{(1)} = h_0^{(1)}- 1.
\]
As a result,
\[   \int_{\Sigma_r}(h_0^{(1)}-h^{(1)}) dS^2  =  \frac{4 \pi}{3} (\bar Ric (e_0,e_0) + \frac{1}{2}\bar R  + 3) = 8 \pi T(e_0,e_0). \]
Next we compute $p^i$.  From \eqref{div_alpha}, we conclude that
\[   \int_{S^2} \tilde X^i   \tilde \nabla^a (\alpha_{H_0}^{(2)})_a   dS^2  =0.\]
Using the expansion for $div_\sigma \alpha_H$ from Lemma \ref{non_physical_data}, we have
\begin{align*}
  p^i =  r^3 \int_{S^2} &   {\Big [}  \tilde\Delta[  \frac{1}{2} \bar R_{L\underline LL\underline L} + \frac{1}{6} \bar Ric(L,L)  + \frac{1}{3}\bar Ric(L,\underline L)] \\
&  -\bar R_{L\underline LL\underline L} - \frac{1}{3}\bar Ric(L,\underline L)- \frac{1}{6}\bar Ric(L,L){\Big ]} \tilde X^idS^2 + O(r^{4}) \\
 =r^3 \int_{S^2} &   {\Big [}  -2\bar R_{L\underline LL\underline L} - \bar Ric(L,\underline L)- \frac{1}{2}\bar Ric(L,L){\Big ]} \tilde X^idS^2 + O(r^{4})
\end{align*}
where we apply integration by parts for the last equality.

To evaluate the above integral, we switch to the orthogonal frame $\{e_0 , e_i\}$. We have
\begin{align*}
  \int_{S^2}  {\Big [}  -2\bar R_{L\underline LL\underline L} - \bar Ric(L,\underline L)- \frac{1}{2}\bar Ric(L,L){\Big ]} \tilde X^idS^2
=&  - \int_{S^2}   \bar Ric(e_0,e_j)\tilde {X}^j \tilde {X}^idS^2 \\
= & -\frac{4 \pi}{3}\bar Ric(e_0,e_i).
\end{align*}
This finishes the proof of part (1) since $ \bar Ric(e_0,e_i) = 8 \pi T_{0i}.$

For part (2), we recall that from \eqref{obser_inequlaity}, we have
\[  A \ge \sqrt{1+ |\vec{C}|^2} \ge 1. \]
Suppose $T(e_0, \cdot) $ is dual to a future directed timelike vector $V$. Fixing $\vec{C}$, $T(e_0, A e_0 + C_i e_i) $  is strictly increasing in $A$ and the minimum of the quasi-local energy can only occur on the set of observers, $O$, such that  
\[  A = \sqrt{1+ |\vec{C}|^2}. \]
From here the Theorem easily follows.
\end{proof}

\begin{remark}
While the proof is similar to  \cite[Theorem 4.1]{Chen-Wang-Yau2} of the small sphere limit of the Wang-Yau quasi-local energy, The proof there used \cite[Theorem 2.1]{Wang-Yau3} which evaluates the limit of the quasi-local energy under the compatibility condition of mean curvature
\[
\lim_{r \to 0} \frac{|H|}{|H_0|}=1.
\]
However, we do not have the corresponding result when the AdS space is used as the reference. It will be interesting to establish the analog of \cite[Theorem 2.1]{Wang-Yau3} for the newly defined quasi-local energy.
\end{remark}

\section{Small sphere limit of the quasi-local energy in vacuum spacetimes}
In this section, we begin the computation of the small sphere limit of the quasi-local energy in vacuum spacetimes.
First, we derive the following lemma about the Gauss curvature of $\Sigma_r$.
\begin{lemma}
For vacuum spacetimes, the Gauss curvature $K$ of $\Sigma_r$ admits the following expansion
\[ K = \frac{1}{r^2} + 2 \rho  + O(r) \]
\end{lemma}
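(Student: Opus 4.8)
I would compute $K$ directly from the intrinsic geometry of $\Sigma_r$, i.e. from the expansion of the induced metric $\sigma_{ab}$, rather than from a Gauss equation involving $|H|^2$. The first step is to record the induced metric in a vacuum spacetime. From \eqref{sigma}, $\sigma_{ab}=r^2\tilde\sigma_{ab}-\frac13 r^4\bar R_{LabL}+O(r^5)$. In vacuum the constant-curvature part of the Riemann tensor contributes nothing to $\bar R_{LabL}$, since that part is $\kappa^2(g_{Lb}g_{aL}-g_{LL}g_{ab})$ and $\langle L,L\rangle=\langle L,\partial_a\rangle=0$; hence $\bar R_{LabL}=\bar W_{LabL}=-\alpha_{ab}$. (This sign is exactly the one that makes the rewriting of $l_{ab}$ in the proof of Lemma \ref{data} consistent with \eqref{l}.) Therefore
\[ \sigma_{ab}=r^2\Big(\tilde\sigma_{ab}+\tfrac13 r^2\alpha_{ab}\Big)+O(r^5). \]

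\textbf{Conformal rescaling and linearization.} I would then exploit the conformal weight of the Gauss curvature. Writing $\sigma=r^2 g$ with $g=\tilde\sigma+r^2 h+O(r^3)$ and $h=\frac13\alpha$, the scaling law $K[r^2 g]=r^{-2}K[g]$ reduces everything to computing $K[g]$ to first order in the parameter $r^2$. For this I would use the standard linearization of the scalar curvature ($R=2K$ in dimension two) about the round unit sphere,
\[ DR_{\tilde\sigma}(h)=-\tilde\Delta(\tilde\sigma^{ab}h_{ab})+\tilde\nabla^a\tilde\nabla^b h_{ab}-\langle h,\widetilde{Ric}\rangle, \]
together with $\widetilde{Ric}_{ab}=\tilde\sigma_{ab}$ for the unit sphere.

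\textbf{Finishing with the Weyl identities.} Two inputs close the computation. Since $\alpha_{ab}=\bar W_{aLbL}$ is trace-free on $S^2$, both the Laplacian term and the $\langle h,\widetilde{Ric}\rangle$ term vanish; and applying the contracted Bianchi relations of Lemma \ref{Weyl_derivatives2} twice gives $\tilde\nabla^a\tilde\nabla^b\alpha_{ab}=\tilde\nabla^a(4\beta_a)=12\rho$. Hence $DR_{\tilde\sigma}(h)=\frac13\cdot 12\rho=4\rho$, so $DK_{\tilde\sigma}(h)=2\rho$ and $K[g]=1+2\rho\,r^2+O(r^3)$. Undoing the rescaling,
\[ K=r^{-2}K[g]=\frac{1}{r^2}+2\rho+O(r). \]
The $O(r)$ remainder is legitimate: the (unknown) cubic-in-$r$ part of the metric perturbation enters $K[g]$ at order $r^3$, hence at order $r$ after the factor $r^{-2}$, while the second-order terms of the linearization enter only at $O(r^2)$.

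\textbf{Main obstacle.} The substance is not in any single estimate but in the bookkeeping of signs and orders: pinning $\bar R_{LabL}=-\alpha_{ab}$ (an antisymmetry in the first index pair, cross-checked against \eqref{l}), using the correct sign in the linearized scalar curvature, and verifying that the undetermined $O(r^5)$ tail of $\sigma_{ab}$ affects only the stated $O(r)$ error. One could instead run the spacetime Gauss equation $2K=\sigma^{ac}\sigma^{bd}R^N_{abcd}+|H|^2-|II|^2$ using the trace expansions of Lemma \ref{data}, but this forces a separate order-$r^0$ expansion of the tangential curvature $\sigma^{ac}\sigma^{bd}R^N_{abcd}$ (which reduces through the Weyl identities to the spatial component $W_{1212}=-\rho$) and is considerably more delicate; the intrinsic route above avoids it entirely.
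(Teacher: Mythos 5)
Your proposal is correct and follows essentially the same route as the paper: the paper likewise computes $K$ intrinsically from $\sigma_{ab}=r^2\tilde\sigma_{ab}+\frac{r^4}{3}\alpha_{ab}+O(r^5)$, uses the tracelessness of $\alpha_{ab}$ to reduce the linearized curvature to $\frac{1}{6}\tilde\nabla^a\tilde\nabla^b\alpha_{ab}$, and then applies the divergence identities of Lemma \ref{Weyl_derivatives2} to get $\tilde\nabla^a\tilde\nabla^b\alpha_{ab}=12\rho$. Your write-up merely makes explicit the conformal rescaling and the linearization formula that the paper leaves implicit.
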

\begin{proof}
Recall that the induced metric $\sigma$ is
\[  \sigma_{ab} = r^2 \tilde \sigma_{ab} + \frac{r^4}{3} \alpha_{ab} + O(r^5)  \]
and $\alpha_{ab}$ is traceless. As a result, we have
\[
K = \frac{1}{r^2} + \frac{1}{6} \tilde \nabla^a \tilde \nabla^b \alpha_{ab} + O(r).
\]
The lemma now follows from Lemma 5.6.
\end{proof}
Next, we solve the optimal embedding equation derived in Lemma \ref{optimal_eq_leading}.
\begin{lemma}\label{y03}
For the observer $T_0= (A,\vec{B},\vec{C},\vec{D})$, the solution of the optimal embedding equation gives
\[\begin{split} Y_0^{(3)}= - \frac{1}{3}W_0+\frac{\sum_i C_iP_i}{A}.\end{split}\]
\end{lemma}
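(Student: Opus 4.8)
The plan is to solve the linear equation of Lemma \ref{optimal_eq_leading} directly, by decomposing both sides into eigenspaces of $\tilde\Delta$ on the round $S^2$, after first pinning down its two inputs $f^{(1)}$ and $\tilde\nabla^a(\alpha_H^{(2)})_a$ in the vacuum setting. Recall that the operator on the left acts on the $\ell$-eigenspace of $\tilde\Delta$ by $\tfrac12\,\ell(\ell+1)\big(\ell(\ell+1)-2\big)$, i.e. by $0$ on $\ell=0,1$, by $12$ on $\ell=2$, and by $60$ on $\ell=3$; so it is invertible on the $\ell\ge 2$ harmonics, and the kernel $\ell=0,1$ (which does not affect the energy) will be set to zero.

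First I would determine $f^{(1)}=(h_0^{(1)}-h^{(1)})/A$. For the physical side I would read off $h^{(1)}$ from the vacuum expansions of $\sigma^{ab}l_{ab}$ and $\sigma^{ab}n_{ab}$ in Lemma \ref{data}: using the null-frame identity $|H|^2=-2(\sigma^{ab}l_{ab})(\sigma^{ab}n_{ab})$ with leading coefficients $-\tfrac{2}{r}$ and $\tfrac1r+r(\rho+1)$ gives $|H|^2=\tfrac{4}{r^2}+4\rho+4+O(r)$, so $h^{(1)}=\rho+1$. For the reference side I would use the Gauss curvature expansion $K=\tfrac{1}{r^2}+2\rho+O(r)$ established in the preceding lemma together with the Gauss equation $K=-1+\tfrac14|H_0|^2+O(r^2)$ of the image surface in hyperbolic space, which forces $h_0^{(1)}=2\rho+1$. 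Hence $f^{(1)}=\rho/A=W_0/A$, a pure $\ell=2$ function. I would likewise specialize the connection-form divergence from Lemma \ref{non_physical_data} to vacuum (where $\bar Ric(L,L)=0$, $\bar Ric(L,\underline L)=3$, $\bar R_{L\underline LL\underline L}=\rho-1$, and $\tilde\Delta\rho=-6\rho$), obtaining $\tilde\nabla^a(\alpha_H^{(2)})_a=-4\rho=-4W_0$, again purely $\ell=2$.

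Next I would simplify the observer-dependent terms. Writing $\phi=C_i\tilde X^i$ (an $\ell=1$ function) and expanding the two $C_i$-terms gives
\[ \tilde\nabla^a(f^{(1)}\tilde\nabla_a\phi)+\tfrac12\tilde\Delta(f^{(1)}\phi)=2\langle\tilde\nabla f^{(1)},\tilde\nabla\phi\rangle+\tfrac32 f^{(1)}\tilde\Delta\phi+\tfrac12\phi\,\tilde\Delta f^{(1)}. \]
Substituting $f^{(1)}=\rho/A$, $\tilde\Delta\phi=-2\phi$ and $\tilde\Delta\rho=-6\rho$ collapses the right-hand side to $\tfrac1A\big(2\langle\tilde\nabla\rho,\tilde\nabla\phi\rangle-6\rho\phi\big)$. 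The key identity is that this equals $\tfrac{60}{A}\sum_i C_iP_i$: inserting $\tilde\nabla_a\rho=-(\beta_a+2\underline\beta_a)$ from \eqref{Weyl_derivatives1} into the definition \eqref{W_P} of $P_k$ yields $60P_k=2\langle\tilde\nabla\rho,\tilde\nabla\tilde X^k\rangle-6\rho\tilde X^k$. Because $P_k$ is a $-12$-eigenfunction, this whole term is purely $\ell=3$, which I would note also confirms solvability, since the right-hand side then meets the image of $\tfrac12\tilde\Delta(\tilde\Delta+2)$ with no $\ell=0,1$ obstruction.

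Finally I would invert. Collecting the two computations, the equation reads $\tfrac12\tilde\Delta(\tilde\Delta+2)Y_0^{(3)}=-4W_0+\tfrac{60}{A}\sum_i C_iP_i$, with the $\ell=2$ piece $-4W_0$ and the $\ell=3$ piece $\tfrac{60}{A}\sum_i C_iP_i$. Dividing by the respective eigenvalues $12$ and $60$ gives $Y_0^{(3)}=-\tfrac13 W_0+\tfrac1A\sum_i C_iP_i$ up to the kernel, as claimed. I expect the main obstacle to be exactly the harmonic bookkeeping of the observer term: recognizing that the product of the $\ell=2$ density $f^{(1)}$ with the $\ell=1$ function $C_i\tilde X^i$, after the two differential operators, lands precisely in the $\ell=3$ space spanned by the $P_i$. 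This is what simultaneously makes the solvability hold and produces the clean final coefficient $1/A$, and it hinges on the specific value $f^{(1)}=W_0/A$, so the delicate part is reconciling the two data expansions (Lemma \ref{data} and the Gauss curvature lemma) to land on exactly $h_0^{(1)}-h^{(1)}=\rho$.
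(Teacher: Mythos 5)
Your proof is correct and follows essentially the same route as the paper: you compute $f^{(1)}=W_0/A$ exactly as the paper does (via the Gauss curvature expansion $K=\tfrac{1}{r^2}+2\rho+O(r)$ and the Gauss equation of the projected surface in hyperbolic space), and then solve the linear equation of Lemma \ref{optimal_eq_leading} by spherical-harmonic decomposition, which is precisely the step the paper outsources to Lemma 6.3 of \cite{Chen-Wang-Yau3}. Your explicit details all check out, namely the vacuum value $\tilde\nabla^a(\alpha_H^{(2)})_a=-4W_0$, the identity $60P_k=2\langle\tilde\nabla\rho,\tilde\nabla\tilde X^k\rangle-6\rho\tilde X^k$ coming from $\tilde\nabla_a\rho=-\beta_a-2\underline\beta_a$, and the eigenvalues $12$ and $60$ of $\tfrac12\tilde\Delta(\tilde\Delta+2)$ on the $\ell=2$ and $\ell=3$ harmonics.
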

\begin{proof}
Recall that for the vacuum spacetime, we have
\[
|H| = \frac{2}{r} + (W_0 +1) r + O(r^2).
\]
On the other hand, for $|H_0|$, it suffices to compute the mean curvature
of the isometric embedding into the hyperbolic space. From the Gauss equation of the surface into the hyperbolic space, we conclude that
\[
K =  -1 + \frac{1}{4} |H_0|^2  + O(r^2)
\]
As a result, from the above lemma, we have
\[
|H_0|=  \frac{2}{r} + (2W_0 +1) r + O(r^2)
\]
Hence,
\[ f^{(1)} = \frac{W_0}{A}.\]
The rest of the proof is the same as Lemma 6.3 of \cite{Chen-Wang-Yau3}.
\end{proof}
For each $T_0$, we shall compute $8 \pi E(\Sigma_r,Y_r(T_0),T_0)$ which is given by
\begin{equation}
\begin{split}
 \label{energy_expression}
  &\int_{\Sigma_r } f(V ^2 + V ^4|\nabla \tau|^2) +( div V ^2 \nabla \tau)  \sinh^{-1} ( \frac{f  div V ^2 \nabla \tau}{|H||H_0|})  d\Sigma_r \\
  &- \int_{\Sigma_r}  \alpha_{H_0}(V ^2 \nabla \tau) d\Sigma _r + \int_{\Sigma_r}  \alpha_{H}(V ^2 \nabla \tau)d\Sigma_r.
 \end{split}
\end{equation}
We evaluate the three integrals in the next three sections, respectively and put the results together in Section \ref{sec_eva_energy}.
\section{The energy component} \label{sec_energy_com_va}
In this section, we evaluate the first integral in \eqref{energy_expression}:
\[ \int_{\Sigma_r}  f (V ^2 + V ^4|\nabla \tau|^2) + div (V ^2 \nabla \tau)  \sinh^{-1} ( \frac{f div (V ^2 \nabla \tau)}{|H||H_0|}) d\Sigma_r.\]
It suffices to evaluate $ \int_{\Sigma_r}  f [V ^2 + V ^4 |\nabla \tau|^2 +  \frac{( div (V ^2 \nabla \tau))^2}{|H||H_0|}]  d \Sigma_r$ since for $x$ small,
\[  \sinh^{-1}(x) =x + O(x^3). \]

Denote the expansion of the physical data by
\[
\begin{split}
\sigma_{ab} = & r^2\tilde \sigma_{ab} +r^4 \sigma_{ab}^{(4)}+r^5 \sigma_{ab}^{(5)}+O(r^6)\\
|H| = & \frac{2}{r} + r h^{(1)} + r^2 h^{(2)}+ r^3 h^{(3)}+O(r^4) \\
\alpha_H = & r^2 \alpha_H^{(2)}+ r^3 \alpha_H^{(3)}+ r^4 \alpha_H^{(4)}+ O(r^{5}).
\end{split}
\]
Similarly, we have
\[
\begin{split}
|H_0| = & \frac{2}{r} + r h_0^{(1)} + r^2 h_0^{(2)}+ r^3 h_0^{(3)}+O(r^4) \\
\alpha_{H_0} = & r^2 \alpha_{H_0}^{(2)}+ r^3 \alpha_{H_0}^{(3)}+ r^4 \alpha_{H_0}^{(4)}+ O(r^{5}).
\end{split}
\]

First we derive the following lemma.
\begin{lemma}\label{lemma_7_1}
\[
\begin{split}
V ^2 = &  (A^2 - \sum_i C_i^2) + 2(AB_i \tilde X^i - \sum_i C_i D_p \epsilon_{pqi} \tilde X^q) r \\
                     & + r^2 [ (B_i \tilde X^i)^2 +(C_i \tilde X^i)^2  - \sum_i (D_p \epsilon_{pqi} \tilde X^q)^2+  (A^2 - \sum_i C_i^2)] +O(r^3) \\
V ^4 |\nabla \tau| ^2   = & C_iC_j (\delta^{ij}  -\tilde {X}^i \tilde {X}^j) + 2r  \sum_i  C_iD_p \epsilon_{pqi} \tilde{X}^q  \\
   &+( (\delta^{ij}  -\tilde {X}^i \tilde {X}^j)C_iC_j+ \sum_i (D_p \epsilon_{pqi} \tilde X^q)^2+g_1) r^2 + O(r^{3})\\
   (div ( V ^2 \nabla \tau))^2    =& 4C_i C_j(\tilde {X}^i \tilde {X}^j) r^{-2} +  (4 C_iC_j \tilde {X}^i \tilde {X}^j +g_2) + O(r),
\end{split}
\]
where
\[\begin{split}
g_1& = \sum_{i,j} C_i C_j (R_{ij}+2 \tilde \nabla \tilde {X}^i \tilde \nabla  Y_j^{(3)}  )+2 A C_i \tilde \nabla\tilde  X^i \tilde \nabla Y_0^{(3)} \\
g_2& = \sum_{i,j} 4C_i C_j \tilde {X}^i( S_j-\tilde{\Delta} X_j^{(3)} )-4AC_i \tilde{X}^i\tilde{\Delta} Y_0^{(3)},
\end{split}\] and $R_{ij}$ and $S_j$ are defined in \eqref{R_S}.
\end{lemma}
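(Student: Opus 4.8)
The plan is to reduce all three quantities to two ambient objects along the embedded surface and then expand in $r$. Writing the observer \eqref{Killing} as an honest vector field in the coordinates $(y^0,y^i,y^4)$ of $\R^{3,2}$, its components at the image point $Y=(Y_0,Y_i,Y_4)$ are
\[
T_0^{y^0}=-AY_4-B_iY_i, \quad T_0^{y^i}=-B_iY_0-C_iY_4+D_p\epsilon_{pqi}Y_q, \quad T_0^{y^4}=AY_0-C_jY_j.
\]
By \eqref{interpret} we have $V^2=-\langle T_0,T_0\rangle$ and $V^2\nabla_a\tau=-\langle T_0,\partial_aY\rangle$, the pairings taken in the ambient metric $-(dy^4)^2+\sum_i(dy^i)^2-(dy^0)^2$. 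Consequently
\[
V^2=(T_0^{y^4})^2+(T_0^{y^0})^2-\textstyle\sum_i(T_0^{y^i})^2, \qquad V^4|\nabla\tau|^2=\sigma^{ab}\langle T_0,\partial_aY\rangle\langle T_0,\partial_bY\rangle,
\]
and $\mathrm{div}(V^2\nabla\tau)=-\mathrm{div}_\sigma\!\big(\sigma^{ab}\langle T_0,\partial_bY\rangle\big)$, so everything is an algebraic expression in the two ambient quantities above and the inverse metric $\sigma^{ab}$.

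Next I would substitute \eqref{assume}, with $Y_i^{(3)}$ given by Lemma \ref{yi3} and $Y_0^{(3)}$ kept symbolic, using the observation that $\sum_i(\tilde X^i)^2=1$ forces $Y_4=1+\tfrac12 r^2+O(r^4)$ and, since the $r^2$-coefficient is constant, $\partial_aY_4=O(r^4)$ (so the $T_0^{y^4}\partial_aY_4$ contribution to the one-form is negligible). With $Y_0=O(r^3)$ and $Y_i=r\tilde X^i+r^3Y_i^{(3)}+O(r^4)$ one finds $T_0^{y^i}=-C_i+rD_p\epsilon_{pqi}\tilde X^q-\tfrac12 C_ir^2+O(r^3)$, $T_0^{y^4}=-rC_j\tilde X^j+O(r^3)$, and $T_0^{y^0}=-A-rB_i\tilde X^i+O(r^2)$. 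Plugging these into the formula for $V^2$ and collecting powers of $r$ gives the first expansion directly; note that the cubic corrections $Y_i^{(3)},Y_0^{(3)}$ enter $V^2$ only at order $r^3$, which is why they are absent from its expansion through order $r^2$. For the one-form, $\langle T_0,\partial_aY\rangle=\sum_iT_0^{y^i}\partial_aY_i-T_0^{y^0}\partial_aY_0+O(r^{\mathrm{high}})$ has leading part $-rC_i\,\partial_a\tilde X^i$, and contracting the square of this with the leading $\sigma^{ab}=r^{-2}\tilde\sigma^{ab}$ produces the top-order term $C_iC_j(\delta^{ij}-\tilde X^i\tilde X^j)$.

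The terms $g_1$ and $g_2$ are the bookkeeping of the subleading contributions, and this is the step requiring genuine care. The mechanism is that the factor $r^{-2}$ hidden in $\sigma^{ab}$ and in the divergence promotes the cubic embedding corrections (order $r^3$ in $Y$) up to order $r^2$ in $V^4|\nabla\tau|^2$ and to order $r^0$ in $(\mathrm{div}(V^2\nabla\tau))^2$. Concretely, in $V^4|\nabla\tau|^2$ the $r^0$-correction $\sigma^{(0)ab}=-\tfrac13\alpha^{ab}$ to the inverse metric contracts the leading one-form into $\sum_{ij}C_iC_jR_{ij}$ by the definition \eqref{R_S} of $R_{ij}$, while the $r^3$ pieces $\partial_aY_i^{(3)}$ and $\partial_aY_0^{(3)}$ of $\langle T_0,\partial_aY\rangle$, crossed with the leading term, generate the $C_iC_j\,\tilde\nabla\tilde X^i\tilde\nabla Y_j^{(3)}$ and $AC_i\,\tilde\nabla\tilde X^i\tilde\nabla Y_0^{(3)}$ contributions. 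For $(\mathrm{div}(V^2\nabla\tau))^2$ I would expand the covariant divergence using the Christoffel expansion \eqref{gamma}: the order-$r^2$ correction $\gamma_{ab}^{(2)c}$ acting on the leading vector assembles, via $\tilde\sigma^{ab}\gamma_{ab}^{(2)c}\tilde X_c^j$, into $S_j$, whereas $\tilde\Delta Y_j^{(3)}$ and $\tilde\Delta Y_0^{(3)}$ come from differentiating the cubic embedding terms, and squaring against the leading $-2r^{-1}C_i\tilde X^i$ yields $g_2$. The main obstacle is organizing this divergence computation, since one must track the $r$-dependence simultaneously in $\sqrt{\det\sigma}$, in $\sigma^{ab}$, and in the one-form, and recognize the resulting curvature contractions as the tabulated quantities $R_{ij},S_j$ of \eqref{R_S} rather than leaving them as raw derivatives of $\alpha_{ab}$; once the divergence is expanded to order $r$, squaring and retaining the $r^{-2}$ and $r^0$ terms is immediate.
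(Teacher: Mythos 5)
Your proposal is correct and takes essentially the same route as the paper: the paper's proof likewise writes $V^2=-\langle T_0,T_0\rangle$ and the one-form $V^2\nabla_a\tau$ in ambient $\R^{3,2}$ components, expands via \eqref{assume}, and produces $g_1$ and $g_2$ through exactly the three mechanisms you describe ($\sigma^{(0)ab}$ contracting into $R_{ij}$, the Christoffel correction $\tilde\sigma^{ab}\gamma^{(2)c}_{ab}\tilde X^j_c$ assembling into $S_j$, and the cubic corrections supplying the $\tilde\nabla Y^{(3)}$ and $\tilde\Delta Y^{(3)}$ terms). One caveat you should record rather than gloss over: your component formulas are the ones literally dictated by \eqref{Killing}, and carrying them through gives the \emph{opposite} signs from the statement on the $O(r)$ terms involving $D_p\epsilon_{pqi}\tilde X^q$ and on the terms $2AC_i\tilde\nabla\tilde X^i\tilde\nabla Y_0^{(3)}$ in $g_1$ and $4AC_i\tilde X^i\tilde\Delta Y_0^{(3)}$ in $g_2$; the paper's proof instead uses coefficients such as $C_iY^4+B_iY^0+D_p\epsilon_{pqi}Y^q$ and $+(AY^4+B_iY^i)\nabla Y^0$, which are not what \eqref{Killing} yields, so the discrepancy is a sign-convention inconsistency internal to the paper, not an error in your computation. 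It is also immaterial downstream: the $D$-dependent terms cancel in the combination $V^2+V^4|\nabla\tau|^2$ and integrate to zero by parity, and the sign of the $Y_0^{(3)}$-terms flips in tandem with the same convention entering Lemma \ref{optimal_eq_leading} (hence the solution in Lemma \ref{y03}), so Lemma \ref{lemmaenergy} and the final theorems are unchanged; but your claim that plugging in your components gives the stated expansion ``directly'' is accurate only up to these sign flips, and a careful write-up should say so.
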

\begin{proof}
We have
\[
V ^2 = - \langle T_0, T_0 \rangle = (A Y^0 + C_i Y^i) ^2 + (AY^4 + B_i Y^i)^2 -\sum_i (C_i Y^4 + B_iY^0 + D_p \epsilon_{pqi} Y^q)^2
\]
where $Y^0 = O(r^3)$, $Y^i = r \tilde{X}^i + O(r^3)$ and $Y^4 = \sqrt{1+r^2} + O(r^3)$. This gives the expansion for $V ^2$.

For the other two terms, we have
\[  V ^2 \nabla \tau= -T_0^\perp  = (A Y^0 + C_i Y^i)\nabla Y^4 + (AY^4 + B_i Y^i)\nabla Y^0 + (C_i Y^4 + B_i Y^0 + D_p  \epsilon_{pqi}  Y^q)\nabla Y^i. \]
As a result,
\[
\begin{split}
 V ^4  |\nabla \tau| ^2   = &   (C_i Y^4 + B_i Y^0 + D_p  \epsilon_{pqi}  Y^q) (C_j Y^4 + B_j Y^0 + D_m  \epsilon_{mnj}  Y^n) (\delta^{ij}  -\tilde {X}^i \tilde {X}^j)  \\
  &+  r^2 \left  [ \sigma^{(0) ab}(C_i \tilde{X}_a^i)(C_j \tilde{X}_b^j)+2(C_i \tilde \nabla \tilde  Y^i) (C_j \tilde \nabla Y_j^{(3)}+A\tilde \nabla Y_0^{(3)}) \right  ]+ O(r^{3}), \end{split}\] and the formula follows from \eqref{R_S} and \eqref{assume}.
Similarly,
\[
\begin{split}
 (div V ^2 \nabla \tau)^2    = &4\sum_{ij}(C_i Y^4 + B_i Y^0 + D_p  \epsilon_{pqi}  Y^q) (C_j Y^4 + B_j Y^0 + D_m  \epsilon_{mnj}  Y^n) (\tilde{X}^i \tilde{X}^j) r^{-2}  \\
 &+  4(C_i \tilde{X}^i)(C_j \tilde{\sigma}^{ab}\gamma_{ab}^{(2)c}\tilde{X}_c^j)-4(C_i \tilde{X}^i) ( C_j  \tilde \Delta Y_j^{(3)}+A  \tilde \Delta Y_0^{(3)})  + O(r),
 \end{split}
 \]
where
\[\begin{split} & 4(C_i \tilde{X}^i)(C_j \tilde{\sigma}^{ab}\gamma_{ab}^{(2)c}\tilde{X}_c^j)-4(C_i \tilde{X}^i) (C_j \tilde{\Delta} Y_j^{(3)}+A\tilde{\Delta} Y_0^{(3)})\\
= &4C_iC_j \tilde{X}^i S_j-4(C_i \tilde{X}^i) ( C_j \tilde{\Delta} Y_j^{(3)}+A\tilde{\Delta} Y_0^{(3)})\\
= & 4 C_iC_j \tilde{X}^i( S_j-  \tilde{\Delta} Y_j^{(3)})-4AC_i \tilde{X}^i  \tilde{\Delta} Y_0^{(3)}.\end{split}\]
\end{proof}
With the above lemma, we compute $ f [V ^2 + V ^4 |\nabla \tau|^2 +  \frac{( div (V ^2 \nabla \tau))^2}{|H||H_0|}] $.
\begin{lemma}
\begin{align*}
& f [V ^2 + V ^4 |\nabla \tau|^2 +  \frac{( div (V ^2 \nabla \tau))^2}{|H||H_0|}]  \\
= &  A r(h_0^{(1)} - h^{(1)})  +  r^2 [A(h_0^{(2)} - h^{(2)}) +\frac{W_0 B_i \tilde{X}^i}{A}]+ \\
& A r^3 \Big [(h_0^{(3)} - h^{(3)})  +  \frac{(h_0^{(2)} - h^{(2)})B_i \tilde{X}^i}{A^2}+ \frac{(W_0) [ g_1 + \frac{g_2}{4} -\frac{3W_0}{2}C_iC_j\tilde{X}^i \tilde{X}^j   ] }{2A^2} \Big ]+O(r^4).
\end{align*}
\end{lemma}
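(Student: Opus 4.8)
The plan is to exploit the algebraic structure of the integrand rather than expand $f$ naively. First I would record the identity $V^2+V^4|\nabla\tau|^2=V^2(1+V^2|\nabla\tau|^2)$ and set $D^2=\frac{(div(V^2\nabla\tau))^2}{V^2(1+V^2|\nabla\tau|^2)}$, so that the numerator of $f$ in \eqref{rho} is exactly $\sqrt{|H_0|^2+D^2}-\sqrt{|H|^2+D^2}$ and its denominator is $V\sqrt{1+V^2|\nabla\tau|^2}$. With these substitutions the quantity to be expanded factors cleanly as
\[
f\Big[V^2+V^4|\nabla\tau|^2+\tfrac{(div(V^2\nabla\tau))^2}{|H||H_0|}\Big]
=\big(\sqrt{|H_0|^2+D^2}-\sqrt{|H|^2+D^2}\big)\,V\sqrt{1+V^2|\nabla\tau|^2}\,\Big[1+\tfrac{D^2}{|H||H_0|}\Big].
\]
Rationalizing the difference of square roots rewrites the right-hand side as $(|H_0|-|H|)\,Q$, where
\[
Q=\frac{|H_0|+|H|}{\sqrt{|H_0|^2+D^2}+\sqrt{|H|^2+D^2}}\;V\sqrt{1+V^2|\nabla\tau|^2}\;\Big[1+\tfrac{D^2}{|H||H_0|}\Big].
\]
Since $|H_0|-|H|=(h_0^{(1)}-h^{(1)})r+(h_0^{(2)}-h^{(2)})r^2+(h_0^{(3)}-h^{(3)})r^3+O(r^4)$, it then suffices to expand $Q$ through order $r^2$ and multiply.

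The next step is to expand $Q$. The only genuinely delicate point is that $D^2$ is of order $r^{-2}$, the same order as $|H|^2$ and $|H_0|^2$, so the roots $\sqrt{|H|^2+D^2}$ cannot be expanded by treating $D^2$ as a lower-order correction. Using Lemma \ref{lemma_7_1} one finds $|H|^2+D^2=\frac{4A^2}{r^2(A^2-(C_i\tilde X^i)^2)}\big(1+O(r^2)\big)$, with the same leading factor for $|H_0|^2+D^2$; factoring this common behaviour out first and only then Taylor expanding the bracketed remainders keeps the computation finite. At leading order the three factors combine to give $Q=A+O(r)$, which immediately produces the $r^1$ term $A(h_0^{(1)}-h^{(1)})$. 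Tracking the order-$r$ corrections to $V^2$, $V^4|\nabla\tau|^2$ and $(div(V^2\nabla\tau))^2$ from Lemma \ref{lemma_7_1} gives $Q^{(1)}=\frac{B_i\tilde X^i}{A}$; combined with $h_0^{(1)}-h^{(1)}=W_0$ (from the expansions of $|H|$ and $|H_0|$ used in the proof of Lemma \ref{y03}) this yields the $r^2$ term $A(h_0^{(2)}-h^{(2)})+\frac{W_0 B_i\tilde X^i}{A}$.

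The bulk of the work is the order-$r^2$ coefficient $Q^{(2)}$, which feeds the $r^3$ term. Here I would substitute the full second-order data of Lemma \ref{lemma_7_1}: the $r^2$ pieces of $V^4|\nabla\tau|^2$ and of $(div(V^2\nabla\tau))^2$ carry exactly the tensors $R_{ij}$, $S_j$ of \eqref{R_S} together with the embedding corrections packaged into $g_1$ and $g_2$. After factoring out the common leading behaviour of the roots as above and expanding the remaining brackets to second order, the accumulated terms organize into $Q^{(2)}=\frac{1}{2A}\big[g_1+\tfrac{g_2}{4}-\tfrac{3W_0}{2}C_iC_j\tilde X^i\tilde X^j\big]$; the term $-\tfrac{3W_0}{2}C_iC_j\tilde X^i\tilde X^j$ arises from coupling the order-$r$ corrections of the mean curvatures (which carry the factor $W_0=\rho$) with the leading $O(r^{-2})$ part of $D^2$ (which carries $C_iC_j\tilde X^i\tilde X^j$) inside the denominator $\sqrt{|H_0|^2+D^2}+\sqrt{|H|^2+D^2}$ and the bracket $1+\tfrac{D^2}{|H||H_0|}$. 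Finally, multiplying $Q=A+\frac{B_i\tilde X^i}{A}r+Q^{(2)}r^2+O(r^3)$ by the expansion of $|H_0|-|H|$ and collecting powers of $r$ yields the stated formula: the cross term $\frac{(h_0^{(2)}-h^{(2)})B_i\tilde X^i}{A^2}$ comes from $(h_0^{(2)}-h^{(2)})Q^{(1)}$, and the $W_0$-bracket from $(h_0^{(1)}-h^{(1)})Q^{(2)}=W_0Q^{(2)}$.

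The hardest step is the bookkeeping for $Q^{(2)}$: because $D^2$ and $|H|^2$ share the same order in $r$, every factor in $Q$ contributes at order $r^2$, and the nontrivial cancellations that collapse the many resulting cross terms into the compact combination $g_1+\tfrac{g_2}{4}-\tfrac{3W_0}{2}C_iC_j\tilde X^i\tilde X^j$ rely on the precise definitions of $R_{ij}$, $S_j$, $g_1$, $g_2$ and on the identity $f^{(1)}=W_0/A$.
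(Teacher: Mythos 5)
Your proposal is correct and takes essentially the same approach as the paper's proof, which likewise rationalizes the square-root difference in $f$ --- writing $f\big[V^2+V^4|\nabla\tau|^2+\tfrac{(div(V^2\nabla\tau))^2}{|H||H_0|}\big]$ as $(|H_0|^2-|H|^2)=(|H_0|+|H|)(|H_0|-|H|)$ times a quotient whose numerator and denominator are expanded via Lemma \ref{lemma_7_1}, and then substitutes $h_0^{(1)}=2W_0+1$, $h^{(1)}=W_0+1$; your regrouping into $(|H_0|-|H|)\,Q$ is the identical computation. The only blemish is the intermediate claim $|H|^2+D^2=\tfrac{4A^2}{r^2(A^2-(C_i\tilde X^i)^2)}\big(1+O(r^2)\big)$: the relative correction is actually $O(r)$ (coming from the $2AB_i\tilde X^i r$ term in $V^2+V^4|\nabla\tau|^2$), but this is harmless since you track precisely those order-$r$ terms when extracting $Q^{(1)}=B_i\tilde X^i/A$.
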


\begin{proof}
From Lemma \ref{lemma_7_1}, we have
\[
\begin{split}
    &V ^2 + V ^4 |\nabla \tau|^2 +\frac{(div V ^2 \nabla \tau)^2}{|H_0|^2} \\
=  &
A^2 +2AB_i \tilde{X}^i r \\
  &+ r^2  \left [ (B_i \tilde{X}^i)^2 +A^2+(C_i \tilde{X}^i)^2 + g_1+ \frac{g_2}{4}  - h_0^{(1)} \sum_{ij} C_iC_j \tilde {X}^i \tilde {X}^j \right],
\end{split}
 \]
and thus
\[
\begin{split}
   &|H_0| \sqrt{V ^2+ V ^4 |\nabla \tau|^2 +\frac{(div V ^2 \nabla \tau)^2}{|H_0|^2}}  \\
=&A \Big [ \frac{2}{r} + \frac{2AB_i \tilde{X}^i}{A^2} + r(h_0^{(1)} + \frac{ A^2+(C_i \tilde X^i)^2 +g_1+\frac{g_2}{4}  - h_0^{(1)} \sum_{ij } C_iC_j \tilde {X}^i \tilde {X}^j }{A^2} ) \Big ] + O(r^2).
\end{split}
\]
We can compute $|H| \sqrt{1+ |\nabla \tau|^2 +\frac{(\Delta \tau)^2}{|H|^2}} $ and $ 1+ |\nabla \tau|^2 +\frac{(\Delta \tau)^2}{|H_0||H|} $ similarly. As a result, $f (1 + |\nabla \tau|^2 +  \frac{(\Delta \tau)^2}{|H||H_0|})$ is equal to
\begin{align*}
  &(\frac{4}{r}  + (h_0^{(1)} + h^{(1)}) r) [(h_0^{(1)} - h^{(1)}) r +(h_0^{(2)} - h^{(2)}) r^2 +(h_0^{(3)} - h^{(3)}) r^3 ] \times  \\
&\frac{A^2 + B_i \tilde{X}^i r+ r^2 [(C_i \tilde{X}^i)^2+g_1 + \frac{g_2}{4}  - \frac{(h_0^{(1)} + h^{(1)})}{2} C_i C_j\tilde{X}^i \tilde{X}^j ] }{A\{  \frac{4}{r} + \frac{4B_i \tilde{X}^i}{A^2} +  r [ 2(C_i \tilde{X}^i)^2+h_0^{(1)} + h^{(1)} +  \frac{2g_1 + \frac{g_2}{2}  - 2(h_0^{(1)} + h^{(1)})C_iC_j\tilde{X}^i \tilde{X}^j  }{A^2} ]  \}  }  .\\
\end{align*} Finally we plug in $h_0^{(1)}=2W_0+1$ and $h^{(1)}=W_0+1$.
\end{proof}
\begin{lemma}\label{lemmaenergy}

\[\begin{split}
  &\lim_{r\rightarrow 0} r^{-5} \int _{\Sigma_r} f [V ^2 + V ^4 |\nabla \tau|^2 +  \frac{( div (V ^2 \nabla \tau))^2}{|H||H_0|}] \,\, d \Sigma_r\\
=&A \int_{S^2}  (h_0^{(3)} - h^{(3)}) dS^2   -\frac{3C_iC_j}{4 A} \int_{S^2} W_0^2\tilde {X}^i\tilde {X}^j dS^2\\
  &+ \frac{C_iC_j}{2A} \int_{S^2}(W_0)[R_{ij}+2\tilde{\nabla}\tilde {X}^i \cdot\tilde{\nabla}(Y_j^{(3)} + P_j)+\tilde{X^i}(S_j-\tilde{\Delta}Y_j^{(3)} +12P_j)]dS^2 \\
\end{split}\]
\end{lemma}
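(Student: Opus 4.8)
The plan is to start from the pointwise $S^2$-expansion of the integrand established in the lemma immediately preceding this one and to integrate it against the area element of $\Sigma_r$. The first step is to record that $d\Sigma_r = \sqrt{\det \sigma}\,du = r^2\,dS^2\,(1+O(r^3))$: the order-$r^4$ correction $\sigma^{(4)}_{ab}$ to the induced metric is a multiple of the trace-free tensor $\alpha_{ab}$, so $\tilde\sigma^{ab}\sigma^{(4)}_{ab}=0$ and there is no $r^2$ correction to $\sqrt{\det\sigma}$; the first metric correction to the area form therefore sits at order $r^3$ and is irrelevant at the order we need. Writing the previous lemma's integrand as $Ar(h_0^{(1)}-h^{(1)})+r^2[\cdots]+Ar^3[\cdots]+O(r^4)$ and multiplying by $r^2\,dS^2$, the quantity $r^{-5}\int_{\Sigma_r}f[\cdots]\,d\Sigma_r$ becomes $r^{-2}\int_{S^2}(\text{$r^3$ coeff})+r^{-1}\int_{S^2}(\text{$r^4$ coeff})+\int_{S^2}(\text{$r^5$ coeff})+O(r)$, so the task splits into showing the first two integrals vanish and evaluating the third.

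For the existence of the limit I would show the $r^3$- and $r^4$-coefficients integrate to zero. For $r^3$ the integrand is $A(h_0^{(1)}-h^{(1)})=A\,W_0=A\rho$, and $\int_{S^2}\rho\,dS^2=0$ because $\rho$ is a $-6$-eigenfunction of $\tilde\Delta$, cf. \eqref{Weyl_derivatives2}. For $r^4$ the integrand is $A(h_0^{(2)}-h^{(2)})+A^{-1}W_0\,B_i\tilde X^i$; the second piece integrates to zero by orthogonality of $-6$- and $-12$-eigenspaces ($\int\rho\,\tilde X^i\,dS^2=0$), while $\int(h_0^{(2)}-h^{(2)})\,dS^2=0$ once $h_0^{(2)}-h^{(2)}$ is known to carry no constant part. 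In fact both $h^{(2)}$ and $h_0^{(2)}$ are built from the first $L$-derivative of the curvature, e.g. $h^{(2)}=\tfrac23 D\rho$ from Lemma \ref{data}, which raises the eigenvalue from $-6$ to $-12$; thus $h_0^{(2)}-h^{(2)}$ is a pure $-12$-eigenfunction, orthogonal both to constants and to the functions $\tilde X^i$.

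The remaining work is to integrate the $r^5$-coefficient, namely $A\big[(h_0^{(3)}-h^{(3)})+A^{-2}(h_0^{(2)}-h^{(2)})B_i\tilde X^i+(2A^2)^{-1}W_0(g_1+\tfrac14 g_2-\tfrac32 W_0 C_iC_j\tilde X^i\tilde X^j)\big]$. The middle term $A^{-1}\int(h_0^{(2)}-h^{(2)})B_i\tilde X^i\,dS^2$ vanishes by the same eigenspace orthogonality just noted, leaving $A\int(h_0^{(3)}-h^{(3)})\,dS^2$ and $-\tfrac{3}{4A}\int W_0^2 C_iC_j\tilde X^i\tilde X^j\,dS^2$, which already match the first two terms of the statement. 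It then remains to simplify $\tfrac{1}{2A}\int W_0(g_1+\tfrac14 g_2)\,dS^2$: using the explicit forms of $g_1,g_2$ from Lemma \ref{lemma_7_1} I would group the $C_iC_j$ contributions into $R_{ij}+2\tilde\nabla\tilde X^i\cdot\tilde\nabla Y_j^{(3)}+\tilde X^i(S_j-\tilde\Delta Y_j^{(3)})$ and the $A$-linear contributions into $C_i(2\tilde\nabla\tilde X^i\cdot\tilde\nabla Y_0^{(3)}-\tilde X^i\tilde\Delta Y_0^{(3)})$. Finally I substitute the optimal-embedding solution $Y_0^{(3)}=-\tfrac13 W_0+A^{-1}\sum_j C_jP_j$ from Lemma \ref{y03} and use $\tilde\Delta W_0=-6W_0$ with the fact that $P_j$ is a $-12$-eigenfunction: the $A^{-1}C_jP_j$ part produces exactly the extra $2\tilde\nabla\tilde X^i\cdot\tilde\nabla P_j+12P_j\tilde X^i$ terms appearing in the statement, while the $-\tfrac13 W_0$ part reduces, after one integration by parts, to integrands proportional to $\tilde X^i W_0^2$ and $\tilde X^i|\tilde\nabla W_0|^2$, which are odd under the antipodal map and hence integrate to zero.

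The hardest part will be the structural bookkeeping rather than any single estimate: specifically, confirming that $h_0^{(2)}-h^{(2)}$ is a pure $-12$-eigenfunction, so that its $\ell=0$ and $\ell=1$ contaminations drop out of both the $r^4$- and $r^5$-coefficients. This rests on the third-order expansion of the reference mean curvature, which itself requires the isometric embedding to the appropriate order together with the Bianchi-type identities of Lemma \ref{D_divergence}. The algebra merging $g_1$ and $g_2$ with the substitution of $Y_0^{(3)}$ is delicate but essentially mechanical; the genuinely load-bearing observations are the vanishing of the odd-parity integrals and the eigenvalue relations $\tilde\Delta W_0=-6W_0$ and $\tilde\Delta P_j=-12P_j$, which are precisely what convert the $Y_0^{(3)}$ substitution into the $P_j$-terms recorded in the statement.
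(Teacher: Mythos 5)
Your overall strategy coincides with the paper's: integrate the pointwise expansion against $r^2\,dS^2$, kill the $r^3$- and $r^4$-coefficients and the $B_i\tilde X^i$ cross term, then reduce the $r^5$-coefficient by substituting $Y_0^{(3)}=-\tfrac13 W_0+A^{-1}\sum_jC_jP_j$ from Lemma \ref{y03} and using $\tilde\Delta W_0=-6W_0$, $\tilde\Delta P_j=-12P_j$ together with parity; that part of your write-up matches the paper's proof. However, there is a genuinely false claim at precisely the step you flag as load-bearing: that applying $D$ ``raises the eigenvalue from $-6$ to $-12$,'' so that $h_0^{(2)}-h^{(2)}=D\rho$ is a pure $-12$-eigenfunction. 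It is not. Writing $L=e_0+\tilde X^ke_k$ and using that $\nabla_LW$ has the algebraic symmetries and trace-freeness of a Weyl tensor, one gets
\[
D\rho=(\nabla_{e_0}W)_{0i0j}\tilde X^i\tilde X^j+(\nabla_{e_k}W)_{0i0j}\tilde X^k\tilde X^i\tilde X^j .
\]
The first term is a generically nonzero $\ell=2$ spherical harmonic (eigenvalue $-6$), and the second is a cubic whose $\ell=1$ component is proportional to $(\nabla_{e_k}W)_{0k0i}\tilde X^i$, which does not vanish for algebraic or parity reasons alone. So $D\rho$ is a mixture of harmonics, and neither of the two vanishing statements you need follows from the eigenfunction claim as stated; in particular, if the $\ell=1$ piece were present, $\int_{S^2}D\rho\,\tilde X^i\,dS^2$ would be nonzero and the $r^5$-coefficient would acquire an extra term.

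The two facts you need are true, but the second requires the Bianchi-type identities of Lemma \ref{D_divergence} rather than eigenvalue bookkeeping, and this is exactly how the paper argues. First, $\int_{S^2}D\rho\,dS^2=0$ because $4D\rho=\tilde\nabla^aD\beta_a$ is a divergence. Second, combining $\tilde\nabla^a(D\alpha_{ab})=5D\beta_b$ with $\tilde\nabla^aD\beta_a=4D\rho$ gives $20\,D\rho=\tilde\nabla^a\tilde\nabla^bD\alpha_{ab}$, so two integrations by parts, the identity $\tilde\nabla_a\tilde\nabla_b\tilde X^i=-\tilde X^i\tilde\sigma_{ab}$, and the trace-freeness of $D\alpha_{ab}$ yield
\[
\int_{S^2}D\rho\,\tilde X^i\,dS^2=-\frac{1}{20}\int_{S^2}D\alpha_{ab}\,\tilde\sigma^{ab}\tilde X^i\,dS^2=0 ,
\]
equivalently, the vanishing of the $\ell=1$ component of $D\rho$ is a consequence of the contracted second Bianchi identity $\nabla^\mu W_{\mu 0 i 0}=0$ in vacuum. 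Once you replace your eigenfunction assertion by this argument, the proof closes: the remaining ingredients --- the $O(r^5)$ (in fact $O(r^6)$) accuracy of $d\Sigma_r=r^2dS^2$, the vanishing of $\int_{S^2}\rho\,dS^2$ and $\int_{S^2}\rho\,\tilde X^i\,dS^2$, the grouping of $g_1+\tfrac14 g_2$ from Lemma \ref{lemma_7_1}, and the parity vanishing of $\int_{S^2}W_0\bigl(\tfrac23\tilde\nabla\tilde X^i\cdot\tilde\nabla W_0+2\tilde X^iW_0\bigr)dS^2$ --- all agree with the paper's proof.
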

\begin{proof}
For the volume form, we have $ d\Sigma_r = r^2 dS^2 + O(r^5) $ from the expansion of metric in  Lemma \ref{non_physical_data}. As a result, it suffices to use $r^2 dS^2 $ for the volume form.

For the mean curvature in AdS space,
\[  |H_0| = 2 \sqrt{K+1} +O(r^3)  \]
since $Y_0 = O(r^3)$.
Using the Gauss equation of the surface in $N$, we conclude that
\[   |H_0| - |H| =  \rho r + D \rho r^2  + O(r^3)\]
That is
\[
\begin{split}
(h_0^{(1)} - h^{(1)})  =&  \rho \\
(h_0^{(2)} - h^{(2)})  =& D\rho
\end{split}
\]
It follows that for $i=1,2$
\[  \int_{S^2}  (h_0^{(i)} - h^{(i)}) dS^2 = 0.\]
Moreover, for parity reason, it is easy to see that
\[  \int_{S^2} W_0 \tilde X^i dS^2 =0. \]
Hence, we are left with only the $O(r^5)$ terms. We compute
\[\begin{split}  \int_{S^2} W_0 (  g_1 + \frac{g_2}{4}) dS^2
 = & C_iC_j\int_{S^2}W_0[R_{ij}+2\tilde{\nabla}\tilde {X}^i  \tilde \nabla (Y_j^{(3)}+P_j)+\tilde {X}^i(S_j-\tilde{\Delta}Y_j^{(3)}+12P_j)]dS^2\\
 &- AC_j\int_{S^2} W_0(\frac{2}{3}\tilde{\nabla} \tilde {X}^i\tilde{\nabla}W_0+2\tilde {X}^iW_0) dS^2.\end{split}\]
Due to parity, we have
\[
\int_{S^2} W_0(\frac{2}{3}\tilde{\nabla} \tilde {X}^i\tilde{\nabla}W_0+2\tilde {X}^iW_0) dS^2= 0.
\]
Finally, from Lemma \ref{D_divergence}, we have
\[
\int_{S^2} D\rho \tilde X^i  dS^2=   \frac{1}{20}   \int_{S^2}\tilde \nabla^a \tilde\nabla^b D\alpha_{ab} \tilde X^i dS^2=- \frac{1}{20}  \int_{S^2}D\alpha_{ab} \tilde \sigma^{ab} \tilde X^i dS^2=  0.
\]
Namely, 
\[
\int_{S^2} (h_0^{(2)} - h^{(2)}) \tilde X^i  dS^2 =0.
\]
The lemma follows from Lemma 9.2.
\end{proof}
\subsection{Computation of $\int  (h_0^{(3)} - h^{(3)})$}
Suppose $Y$ is the isometric embedding of $\sigma$ into the AdS space such that
\[
\begin{split}
Y_0  = & \sum_{i=3}^{\infty}Y_0^{(i)}r^i \\
Y_k = &  r \tilde X^k+ \sum_{i=3}^{\infty} Y_k^{(i)}r^i  \\
Y_4 =  & \sqrt{1 + \sum_i Y_i^2 - Y_0^2}
\end{split}
\]
where $Y_0^{(3)}$ and $Y_i^{(3)}$ are given by Lemma \ref{yi3} and Lemma \ref{y03}, respectively.

Let $Y'$ be the isometric embedding of $\sigma$ into the hyperbolic space in the AdS space where
\[(Y_0)'=0\]
\[(Y_i)'=r\tilde {X}^i+r^3 Y_i^{'(3)}+r^4 Y_i^{'(4)}+r^5 Y_i^{'(5)} +O(r^6).\]

Let $A'$ be the second fundamental form the embedding $Y'$ in the hyperbolic space and $\AA'$ be its traceless part.
\[
\AA'_{ab}=r^3\AA_{ab}^{'(3)}+O(r^4)\\
\]

Suppose the Gauss curvature $K$ of $\sigma$ has the following expansion:
\begin{equation}\label{k_exp} 2\sqrt{K}=\frac{2}{r} + k^{(1)} r +  k^{(2)} r^2 + k^{(3)} r^3 +O(r^4).\end{equation}
We have
\begin{proposition} \label{pe2}
The integral $\int_{S^2}  (h_0^{(3)} - h^{(3)}) dS^2$ can be written as follows:
\[\begin{split}\int_{S^2}  (h_0^{(3)} - h^{(3)}) dS^2= &   \frac{1}{2}\int_{S^2}|\AA^{'(3)}|^2_{\tilde{\sigma}} dS^2+\int (k^{(3)} - h^{(3)} - \frac{1}{4}) dS^2-\frac{2}{3}\int_{S^2} W_0^2 dS^2 \\
&-30\frac{C_iC_j}{A^2}\int_{S^2} P_i P_j dS^2.\end{split}\]
\end{proposition}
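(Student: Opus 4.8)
The plan is to read off $h_0^{(3)}$ from a codimension-two Gauss equation for $Y(\Sigma)$ in the AdS space, and then to turn the integral of the timelike part of its second fundamental form into explicit eigenfunction integrals. I would regard $Y(\Sigma)$ as a spacelike surface in AdS with spacelike unit normal $e_3$ and timelike unit normal $e_4$, and write $\hat h^3,\hat h^4$ for the traceless parts of the two second fundamental forms. Since AdS has constant sectional curvature $-1$, the Gauss equation for codimension two reads
\[ \tfrac14 |H_0|^2 = K + 1 + \tfrac12 |\hat h^3|^2 - \tfrac12 |\hat h^4|^2, \]
the minus sign coming from $e_4$ being timelike (here $|H_0|^2=\langle H_0,H_0\rangle>0$). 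Because $Y_0=O(r^3)$ enters the isometric embedding equation only at order $r^6$, the spatial part of $Y$ agrees with that of $Y'$ through order $r^3$, so $\hat h^3$ agrees at leading order with the hyperbolic traceless second fundamental form, $\hat h^{3\,(3)}_{ab}=\AA^{'(3)}_{ab}$.

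I would then take square roots. Expanding $2\sqrt{K+1}$ with \eqref{k_exp} produces the $r^3$-coefficient $k^{(3)}-\tfrac12 k^{(1)}-\tfrac14$, while the $O(r^2)$ quantities $\tfrac12|\hat h^3|^2-\tfrac12|\hat h^4|^2$ contribute $\tfrac12(|\hat h^3|^2)^{(2)}-\tfrac12(|\hat h^4|^2)^{(2)}$ to the same coefficient after dividing by $2\sqrt{K+1}$. Using $(|\hat h^3|^2)^{(2)}=|\AA^{'(3)}|^2_{\tilde\sigma}$ this gives
\[ h_0^{(3)} = k^{(3)}-\tfrac12 k^{(1)}-\tfrac14+\tfrac12 |\AA^{'(3)}|^2_{\tilde\sigma}-\tfrac12 (|\hat h^4|^2)^{(2)}. \]
In vacuum $K=r^{-2}+2\rho+O(r)$ forces $k^{(1)}=2W_0$, and since $W_0=\rho$ is a $(-6)$-eigenfunction we have $\int_{S^2}k^{(1)}=0$, so the $k^{(1)}$ term disappears after integration.

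It remains to compute $\int_{S^2}(|\hat h^4|^2)^{(2)}$. I would identify $\hat h^4$ at leading order with the traceless Hessian of the time function $\tau=Y_0$: computing the second fundamental form of the graph $t=\tau$ in the static chart $-V^2dt^2+\hat g$ gives $\hat h^{4\,(3)}_{ab}=-(\tilde\nabla_a\tilde\nabla_b Y_0^{(3)}-\tfrac12\tilde\sigma_{ab}\tilde\Delta Y_0^{(3)})$, so $(|\hat h^4|^2)^{(2)}$ is the squared norm of this traceless Hessian (the sign being irrelevant). On the round sphere the Bochner formula (with Ricci tensor $\tilde\sigma$) gives, for a $(-\lambda)$-eigenfunction $\psi$,
\[ \int_{S^2}\Big|\tilde\nabla_a\tilde\nabla_b\psi-\tfrac12\tilde\sigma_{ab}\tilde\Delta\psi\Big|^2_{\tilde\sigma}dS^2=\tfrac12\int_{S^2}(\tilde\Delta\psi)^2dS^2+\int_{S^2}\psi\tilde\Delta\psi\,dS^2=\frac{\lambda(\lambda-2)}{2}\int_{S^2}\psi^2dS^2. \]
Substituting $Y_0^{(3)}=-\tfrac13 W_0+A^{-1}\sum_i C_iP_i$ from Lemma \ref{y03} and using that $W_0$ and $P_i$ are $(-6)$- and $(-12)$-eigenfunctions that are mutually orthogonal, I obtain $\int_{S^2}(|\hat h^4|^2)^{(2)}=\tfrac43\int_{S^2}W_0^2+60A^{-2}C_iC_j\int_{S^2}P_iP_j$. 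Feeding this and $\int_{S^2}k^{(1)}=0$ into the integral of the displayed expression for $h_0^{(3)}-h^{(3)}$ yields the proposition.

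The step I expect to be hardest is the precise evaluation of $\hat h^4$. The timelike unit normal $e_4$ is not exactly $V^{-1}\partial_t$; its tilt is of order $r^2$ and contributes at the same order $r^3$ as the naive term, and it is precisely this tilt that turns the coordinate Hessian $\partial_a\partial_b\tau$ into the covariant Hessian $\tilde\nabla_a\tilde\nabla_b\tau$ and removes spurious factors of $V$. Carrying this out carefully, and separately confirming that $\hat h^3$ reduces to $\AA^{'(3)}$ at leading order even though the ambient space is AdS rather than $\mathbb H^3$, is the delicate part; the rest is routine power-series bookkeeping.
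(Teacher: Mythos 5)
Your proof is correct, but it takes a genuinely different route from the paper's. The paper never invokes the codimension-two Gauss equation in AdS; instead it uses the umbilicity of AdS in $\R^{3,2}$ to write $|H_0|^2=4-(\Delta Y_0)^2-(\Delta Y_4)^2+\sum_i(\Delta Y_i)^2$, subtracts the analogous identity for the hyperbolic embedding $Y'$ together with its Gauss equation $4K=(H_0')^2-4-2|\AA'|^2$, and is then left with the extra term $\sum_i\left[(\Delta Y_i)^2-(\Delta Y_i')^2\right]$, which it handles by comparing the two isometric embeddings at \emph{fifth} order: Lemma \ref{isom_higher} gives $2\tilde\nabla \tilde X^i\cdot\tilde\nabla (Y_i^{(5)}-Y_i^{'(5)})=|\tilde\nabla Y_0^{(3)}|^2$, and after integration by parts the time-direction contribution becomes $-\frac14\int(\tilde\Delta Y_0^{(3)})^2+\frac12\int|\tilde\nabla Y_0^{(3)}|^2$. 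Your argument produces exactly the same quantity, since by your Bochner identity
\[
-\frac{1}{2}\int_{S^2}\Big|\tilde\nabla_a\tilde\nabla_b Y_0^{(3)}-\tfrac12\tilde\sigma_{ab}\tilde\Delta Y_0^{(3)}\Big|^2_{\tilde\sigma}\,dS^2=-\frac14\int_{S^2}(\tilde\Delta Y_0^{(3)})^2\,dS^2+\frac12\int_{S^2}|\tilde\nabla Y_0^{(3)}|^2\,dS^2,
\]
but you obtain it from the timelike second fundamental form $\hat h^4$ in the codimension-two Gauss equation, identified with the traceless Hessian of $\tau$. What your route buys: it bypasses the fifth-order embedding comparison entirely (no analogue of Lemma \ref{isom_higher} is needed), and it makes geometrically transparent that the deviation from the hyperbolic picture is precisely the timelike bending $|\hat h^4|^2$ of the surface. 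What the paper's route buys: every term is a flat Laplacian of a coordinate function in $\R^{3,2}$, so no curved-space second-fundamental-form computation is needed --- in particular no treatment of the normal tilt, which you correctly identify as the delicate step in your approach --- and the needed embedding comparison is a one-line consequence of the isometric embedding equation. Your coefficient bookkeeping (the factor $\lambda(\lambda-2)/2$ with $\lambda=6$ for $W_0$ and $\lambda=12$ for $P_i$, the $L^2$-orthogonality of $W_0$ and $P_i$, and $\int_{S^2} k^{(1)}\,dS^2=0$ since $k^{(1)}=2W_0$ is a $(-6)$-eigenfunction) checks out and reproduces the coefficients $-\frac23$ and $-30\,C_iC_j/A^2$ in the statement.
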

\begin{proof}
We first rewrite
\[  \int_{\Sigma_r} (|H_0| - |H|) d \Sigma_r = \int_{\Sigma_r}(|H_0| - 2\sqrt{K+1} )d \Sigma_r+ \int_{\Sigma _r} (2 \sqrt{K+1} -|H|)d \Sigma_r .\]
We have
\[  \int_{\Sigma_r} (2 \sqrt{K+1} - |H| ) d \Sigma_r=  r^5 \int_{S^2} (k^{(3)}-h^{(3)}- \frac{1}{4})dS^2 + O(r^6). \]
To evaluate $ \int_{\Sigma_r}(|H_0| - 2\sqrt{K+1} )d \Sigma_r$, recall that  $|H_0|^2$ is given by
\begin{equation}\label{h_0}|H_0|^2=-(\Delta Y_0)^2-(\Delta Y_4)^2+ \sum_{i=1}^3 (\Delta  Y_i)^2 + 4 .\end{equation}
since the AdS space is an umbilical hypersurface in $\R^{3,2}$.

Let $H_0'$  be the mean curvature of $Y'$.
Similarly, $| H_0'|$ is given by
\begin{equation}\label{h'} | H_0'|^2  = -(\Delta Y'_4)^2+ \sum_{i=1}^3 (\Delta  (Y_i)')^2 + 4 . \end{equation}
The Gauss equation of $Y'(\Sigma)$ in the hyperbolic space reads \begin{equation}\label{gauss}  4 K = (H_0')^2 - 4 - 2 |\AA'|^2.\end{equation}

We compute from \eqref{h_0}, \eqref{gauss}, and \eqref{h'} that \[|H_0|^2-4K-4=2|\AA'|^2-(\Delta Y_0)^2 -(\Delta Y_4)^2 + (\Delta Y'_4)^2+\sum_{i=1}^3(\Delta Y_i)^2-\sum_{i=1}^3(\Delta Y_i')^2,\]
where \[\Delta Y_0=\Delta(r^3 Y_0^{(3)}+O(r^4))=r\tilde{\Delta} Y_0^{(3)}+O(r^2)\]
$\Delta Y_4 = O(r^2)$, $\Delta Y'_4 = O(r^2)$ and
\[\begin{split}\sum_{i=1}^3(\Delta Y_i)^2-\sum_{i=1}^3(\Delta (Y_i)')^2
=&\sum_{i=1}^3\Delta (Y_i-Y_i') \Delta(Y_i+Y_i')\\
=&-4r^2\tilde{X}^i\tilde{\Delta}(Y_i^{(5)}-Y_i^{'(5)})+O(r^3).\end{split}\]
As a result, we have
\[
\begin{split}
   &\int_{S^2}  (h_0^{(3)} - h^{(3)}) dS^2 \\
= & \frac{1}{2}\int_{S^2} |\AA^{'(3)}|^2_{\tilde{\sigma}} dS^2-\frac{1}{4}\int_{S^2}  (\tilde{\Delta}Y_0^{(3)})^2 dS^2-\int_{S^2}  \tilde X^i \tilde{\Delta}(Y_i^{(5)}-Y_i^{'(5)}) dS^2+\int_{S^2}   (k^{(3)}- \frac{1}{4} - h^{(3)}) dS^2.
\end{split}
\]
To evaluate the second last terms, we need
\begin{lemma}\label{isom_higher}
If we choose $Y_i^{(3)}=Y_i^{'(3)}$ and $Y_i^{(4)}=Y_i^{'(4)}$, then $Y_i^{(5)}$ and $Y_i^{'(5)}$ are related by
\[2\tilde{\nabla} \tilde{X}^i\cdot \tilde{\nabla} (Y_i^{(5)}-Y_i^{'(5)})=|\tilde{\nabla} Y_0^{(3)}|^2.\]
\end{lemma}

\begin{proof}
This follows directly from the expansion of the metric and the isometric embedding equation.
\end{proof}
The proposition now follows from the expression of $Y_0^{(3)}$ in Lemma \ref{y03}.
\end{proof}
The traceless part $\AA_{ab}$ of $h_{ab}$ has the following expansion
\[\AA_{ab}=r^3 \AA_{ab}^{(3)}+O(r^4).\]
\begin{lemma}
\[\AA^{(3)}_{ab}=(\tilde {X}^i_a\tilde {X}^j_b+\tilde {X}^i_b \tilde {X}^j_a)(-\frac{1}{4}W_0 \delta_{ij}-\frac{1}{2}W_{0i0j}). \]
\end{lemma}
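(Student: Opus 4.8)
The plan is to identify $\AA_{ab}$ with the trace-free part of the second fundamental form of the isometric embedding of $\sigma$ into the hyperbolic space $\mathbb{H}^3$ (the static slice $y^0=0$ of the AdS space), which to the relevant order agrees with the surface studied in Proposition \ref{pe2}. Writing the embedding as $Y'=((Y_i)',(Y_4)')$ with $(Y_i)'=r\tilde X^i+r^3 Y_i^{(3)}+\cdots$ and $(Y_4)'=\sqrt{1+\sum_i (Y_i')^2}$, the second fundamental form with respect to the unit normal $\nu$ is $h_{ab}=-\langle \partial_a\partial_b Y',\nu\rangle$, the inner product being that of the flat ambient $\R^{3,1}$. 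First I would record that $\partial_a (Y_4)'=O(r^4)$ (using $\sum_i\tilde X^i\tilde\nabla_a\tilde X^i=\tfrac12\tilde\nabla_a(\sum_i(\tilde X^i)^2)=0$), and then, from the three conditions $\langle\nu,\partial_a Y'\rangle=0$, $\langle\nu,Y'\rangle=0$, $\langle\nu,\nu\rangle=1$, extract the normal expansion $\nu=(\tilde X^i+r^2\nu^{i,(2)}+O(r^4),\, r+O(r^3))$. The unit condition fixes the radial part $\sum_i\nu^{i,(2)}\tilde X^i=\tfrac12$, while the tangency condition fixes the tangential part through $\sum_i\nu^{i,(2)}\tilde X^i_a=-\sum_i\tilde X^i\tilde\nabla_a Y_i^{(3)}$.

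Next I would collect the order-$r^3$ coefficient of $h_{ab}$. Converting ordinary second derivatives to Hessians by $\partial_a\partial_b f=\tilde\nabla_a\tilde\nabla_b f+\tilde\Gamma^c_{ab}\partial_c f$ and using $\tilde\nabla_a\tilde\nabla_b\tilde X^i=-\tilde\sigma_{ab}\tilde X^i$, the Christoffel contributions coming from the $Y_i^{(3)}$ term and from the tangential part of $\nu^{i,(2)}$ cancel exactly, leaving
\[
h^{(3)}_{ab}=-\sum_i\tilde X^i\,\tilde\nabla_a\tilde\nabla_b Y_i^{(3)}+\tfrac12\tilde\sigma_{ab}.
\]
As an internal check, $\tilde\sigma^{ab}h^{(3)}_{ab}=2W_0+1=h_0^{(1)}$, matching the reference mean curvature used in Lemma \ref{y03}.

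Then I would substitute the vacuum form $Y_i^{(3)}=-\tfrac13\beta^c\tilde\nabla_c\tilde X^i+\tfrac12\rho\,\tilde X^i$ from Lemma \ref{yi3} (here $\bar Ric(L,L)=0$). For the $\tfrac12\rho\,\tilde X^i$ piece, contraction with $\tilde X^i$ gives $\tfrac12(\tilde\nabla_a\tilde\nabla_b\rho-\rho\tilde\sigma_{ab})$, and since $\rho=W_0=\bar W_{0i0j}\tilde X^i\tilde X^j$ by \eqref{W_P} one computes $\tilde\nabla_a\tilde\nabla_b\rho=\bar W_{0i0j}(\tilde X^i_a\tilde X^j_b+\tilde X^i_b\tilde X^j_a)-2W_0\tilde\sigma_{ab}$. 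For the $-\tfrac13\beta^c\tilde\nabla_c\tilde X^i$ piece the same contraction collapses to $\tfrac13(\tilde\nabla_a\beta_b+\tilde\nabla_b\beta_a)$, which by the symmetrised derivative formula in \eqref{Weyl_derivatives1} equals $\rho\tilde\sigma_{ab}-\tfrac13\alpha_{ab}$. Collecting terms gives
\[
h^{(3)}_{ab}=\tfrac12 W_0\tilde\sigma_{ab}+\tfrac13\alpha_{ab}-\tfrac12 \bar W_{0i0j}(\tilde X^i_a\tilde X^j_b+\tilde X^i_b\tilde X^j_a)+\tfrac12\tilde\sigma_{ab}.
\]

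The crucial and most error-prone step is the last one: the traceless part must be taken with respect to the \emph{physical} metric $\sigma_{ab}=r^2\tilde\sigma_{ab}+\tfrac13 r^4\alpha_{ab}+O(r^5)$ of \eqref{sigma}, not the round metric $\tilde\sigma_{ab}$. Expanding $\AA_{ab}=h_{ab}-\tfrac12(\sigma^{cd}h_{cd})\sigma_{ab}$, the $\tfrac13 r^4\alpha_{ab}$ term in $\sigma$ produces precisely a $-\tfrac13\alpha_{ab}$ at order $r^3$, which cancels the $\tfrac13\alpha_{ab}$ above; the residual $\tilde\sigma_{ab}$-multiples also reorganise and cancel. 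Using $\sum_i\tilde X^i_a\tilde X^i_b=\tilde\sigma_{ab}$ to absorb the remaining $W_0\tilde\sigma_{ab}$ into the $\delta_{ij}$ contraction, one arrives at the stated $\AA^{(3)}_{ab}=(\tilde X^i_a\tilde X^j_b+\tilde X^i_b\tilde X^j_a)(-\tfrac14 W_0\delta_{ij}-\tfrac12 \bar W_{0i0j})$. I expect the bookkeeping of this trace subtraction against the metric perturbation to be the principal obstacle — omitting it produces a spurious $\tfrac13\alpha_{ab}$ — with the higher-order expansion of $\nu$ a close second.
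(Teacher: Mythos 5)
Your proposal is correct and is essentially the paper's own route: the paper gives no computation here, deferring to Lemma 7.5 of \cite{Chen-Wang-Yau2} with the remark that the proof is the same after embedding everything in $\R^{3,2}$, and your argument is exactly that computation carried out in the hyperbolic slice, where the extra hyperbolic contributions are pure trace. I checked the key identities --- $h^{(3)}_{ab}=-\sum_i\tilde X^i\tilde\nabla_a\tilde\nabla_b Y_i^{(3)}+\tfrac12\tilde\sigma_{ab}$, the two substitution pieces coming from Lemma \ref{yi3} together with \eqref{Weyl_derivatives1}, and the trace subtraction against $\sigma_{ab}=r^2\tilde\sigma_{ab}+\tfrac13 r^4\alpha_{ab}+O(r^5)$ --- and they combine to $-\tfrac12 W_0\tilde\sigma_{ab}-\tfrac12 \bar W_{0i0j}\bigl(\tilde X^i_a\tilde X^j_b+\tilde X^i_b\tilde X^j_a\bigr)$, which is the stated formula by $\sum_i\tilde X^i_a\tilde X^i_b=\tilde\sigma_{ab}$.
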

\begin{remark}
The proof is the same as Lemma 7.5 of \cite{Chen-Wang-Yau2} by embedding everything in $\R^{3,2}$ as above.
\end{remark}
As result, from Lemma 7.6 of \cite{Chen-Wang-Yau2}, we conclude again that
\begin{lemma} \label{lemmaAA}
\[ \int_{S^2} |\AA^{(3)}|^2_{\tilde{\sigma}} dS^2 =3\int_{S^2} W_0^2 dS^2. \]
\end{lemma}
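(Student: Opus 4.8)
The plan is to reduce the pointwise squared norm $|\AA^{(3)}|^2_{\tilde\sigma}$ to a polynomial in the first eigenfunctions $\tilde X^i$ and then integrate using the standard moment formulas on the round $S^2$. The starting point is the explicit formula from the preceding lemma, $\AA^{(3)}_{ab}=(\tilde X^i_a\tilde X^j_b+\tilde X^i_b\tilde X^j_a)M_{ij}$ with $M_{ij}=-\frac14 W_0\delta_{ij}-\frac12\bar W_{0i0j}$; since $M_{ij}$ is symmetric in $i,j$ this is the same as $\AA^{(3)}_{ab}=2M_{ij}\tilde X^i_a\tilde X^j_b$. The crucial simplification is the identity $\tilde\sigma^{ab}\tilde X^i_a\tilde X^j_b=\delta^{ij}-\tilde X^i\tilde X^j$, which follows from $\sum_i(\tilde X^i)^2=1$ together with the Hessian relation $\tilde\nabla_a\tilde\nabla_b\tilde X^i=-\tilde X^i\tilde\sigma_{ab}$ for the first eigenfunctions. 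Contracting twice, this turns $|\AA^{(3)}|^2_{\tilde\sigma}=4M_{ij}M_{kl}\,\tilde\sigma^{ac}\tilde\sigma^{bd}\tilde X^i_a\tilde X^k_c\,\tilde X^j_b\tilde X^l_d$ into $4M_{ij}M_{kl}(\delta^{ik}-\tilde X^i\tilde X^k)(\delta^{jl}-\tilde X^j\tilde X^l)$, a polynomial of degree at most four in the $\tilde X$'s.

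Next I would carry out the algebra in matrix form. Writing $\hat n=(\tilde X^1,\tilde X^2,\tilde X^3)$, $T=I-\hat n\hat n^{\mathsf T}$ (the tangential projection, so $T^2=T$ and $\operatorname{tr}T=2$), and $P=(\bar W_{0i0j})$, one has $M=-\frac14 W_0 I-\frac12 P$ and, after relabeling indices, $|\AA^{(3)}|^2_{\tilde\sigma}=4\operatorname{tr}\big((TM)^2\big)$. Here I would use that in vacuum the Weyl tensor is totally trace-free, so $P$ is a symmetric trace-free $3\times3$ matrix and $W_0=\rho=\hat n^{\mathsf T}P\hat n=\bar W_{0i0j}\tilde X^i\tilde X^j$. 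Expanding $\operatorname{tr}\big((TM)^2\big)$ using $T^2=T$, $\operatorname{tr}P=0$, $\operatorname{tr}(TP)=-W_0$ and $\operatorname{tr}(TPTP)=|P|^2-2|P\hat n|^2+W_0^2$ yields the pointwise identity $|\AA^{(3)}|^2_{\tilde\sigma}=\tfrac12 W_0^2+|P|^2-2|P\hat n|^2$, where $|P|^2=\operatorname{tr}(P^2)$ and $|P\hat n|^2=\hat n^{\mathsf T}P^2\hat n$.

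Finally I would integrate term by term using $\int_{S^2}\tilde X^i\tilde X^j\,dS^2=\frac{4\pi}{3}\delta^{ij}$ and $\int_{S^2}\tilde X^i\tilde X^j\tilde X^k\tilde X^l\,dS^2=\frac{4\pi}{15}(\delta^{ij}\delta^{kl}+\delta^{ik}\delta^{jl}+\delta^{il}\delta^{jk})$. Trace-freeness of $P$ kills the $\delta^{ij}\delta^{kl}$ contribution, giving $\int_{S^2}W_0^2\,dS^2=\frac{8\pi}{15}|P|^2$, together with $\int_{S^2}|P|^2\,dS^2=4\pi|P|^2$ and $\int_{S^2}|P\hat n|^2\,dS^2=\frac{4\pi}{3}|P|^2$. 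Substituting, $\int_{S^2}|\AA^{(3)}|^2_{\tilde\sigma}\,dS^2=\frac{4\pi}{15}|P|^2+4\pi|P|^2-\frac{8\pi}{3}|P|^2=\frac{8\pi}{5}|P|^2=3\int_{S^2}W_0^2\,dS^2$, which is the claim.

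Conceptually, the reason this matches the $\Lambda=0$ computation of \cite{Chen-Wang-Yau2} is that the cosmological-constant ($\kappa^2$) contributions to the curvature are pure trace and therefore drop out of the traceless part $\AA^{(3)}$, so nothing new enters. The only genuine work is the bookkeeping of the trace-free contractions and moment integrals; I expect the main (though modest) obstacle to be keeping the index contractions in $\operatorname{tr}(TPTP)$ straight, which the matrix reformulation is precisely designed to tame.
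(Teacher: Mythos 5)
Your computation is correct, but it takes a different route from the paper: the paper's entire proof of this lemma is a citation of Lemma 7.6 of \cite{Chen-Wang-Yau2}, justified by the preceding lemma's observation that $\AA^{(3)}_{ab}$ has exactly the same expression as in the $\Lambda=0$ case (the $\kappa^2$ contributions to the curvature being pure trace, they cannot enter the traceless part), so the earlier integral identity applies verbatim. You instead prove the identity from scratch, and your argument checks out at every step: with $P_{ij}=\bar W_{0i0j}$ symmetric and trace-free, $T=I-\hat n\hat n^{\mathsf T}$, and $M=-\frac14 W_0 I-\frac12 P$, the contraction identity $\tilde\sigma^{ab}\tilde X^i_a\tilde X^j_b=\delta^{ij}-\tilde X^i\tilde X^j$ and the symmetrization factor give $|\AA^{(3)}|^2_{\tilde\sigma}=4\operatorname{tr}\bigl((TM)^2\bigr)$, and expanding with $\operatorname{tr}(TP)=-W_0$, $\operatorname{tr}(TPTP)=|P|^2-2|P\hat n|^2+W_0^2$ indeed yields the pointwise identity $|\AA^{(3)}|^2_{\tilde\sigma}=\tfrac12W_0^2+|P|^2-2|P\hat n|^2$ (one can sanity-check it at points where $\hat n$ is a principal axis of $P$), after which the second and fourth moment formulas give $\tfrac{4\pi}{15}|P|^2+4\pi|P|^2-\tfrac{8\pi}{3}|P|^2=\tfrac{8\pi}{5}|P|^2=3\int_{S^2}W_0^2\,dS^2$. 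What the paper's approach buys is brevity, at the cost of opacity: the reader must consult \cite{Chen-Wang-Yau2} and trust that the formula for $\AA^{(3)}$ transfers. What your approach buys is a self-contained verification, and your closing remark--that the cosmological constant only contributes trace terms which drop out of $\AA^{(3)}$--is precisely the point the paper relies on implicitly to make its citation legitimate, so your write-up makes the logic of the paper explicit rather than merely paralleling it.
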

\subsubsection{Computing $\int (k^{(3)} - \frac{1}{4} - h^{(3)}) dS^2$}
\begin{lemma}\label{lemma7.7}
\begin{equation}
 \int _{S^2} (k^{(3)} - \frac{1}{4}- h^{(3)}) dS^2 =-\frac{3}{4}\int_{S^2} W_0^2  d S^2-\frac{1}{60} \int _{S^2} |\alpha|^2 dS^2 + \frac{11}{45} \int _{S^2} |\beta|^2 dS^2.\end{equation}
\end{lemma}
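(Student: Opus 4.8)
The plan is to compute the coefficients $h^{(3)}$ and $k^{(3)}$ of the physical mean curvature and Gauss curvature separately, form $k^{(3)}-\tfrac14-h^{(3)}$, and integrate over $S^2$, where most terms die for parity or divergence reasons. For $h^{(3)}$ I would use that the mean curvature vector is $H=-(\sigma^{ab}l_{ab})\underline L-(\sigma^{ab}n_{ab})L$, so that $|H|^2=-2(\sigma^{ab}l_{ab})(\sigma^{ab}n_{ab})$; substituting the expansions of $\sigma^{ab}l_{ab}$ and $\sigma^{ab}n_{ab}$ from Lemma \ref{data} (with $\kappa=1$) and reading off the coefficient of $r^2$ in $|H|^2=\tfrac{4}{r^2}+4h^{(1)}+4h^{(2)}r+(4h^{(3)}+(h^{(1)})^2)r^2+\cdots$, together with $h^{(1)}=\rho+1$, determines $h^{(3)}$ in terms of $D^2\rho$, $|\alpha|^2$, $|\beta|^2$ and $\rho$.

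For $k^{(3)}$ I would use the contracted Gauss equation of $\Sigma_r$ in $N$,
\[ 2K=|H|^2+2\,l^{ab}n_{ab}-\sigma^{ac}\sigma^{bd}R_{abcd}, \]
the sign of the curvature term being the one that reproduces the leading expansion $K=r^{-2}+2\rho+O(r)$ proved above. In vacuum $R_{abcd}=W_{abcd}+\kappa^2(g_{ac}g_{bd}-g_{ad}g_{bc})$, and the background piece contributes the metric-independent constant $2\kappa^2=2$, so all the $r$-dependence sits in the Weyl part. Since $l^{ab}n_{ab}$ is already expanded to order $r^2$ in the proof of Lemma \ref{data}, and the tangential Weyl contraction reduces by tracelessness to $\sigma^{ac}\sigma^{bd}W_{abcd}=2\sigma^{bd}W_{Lb\underline Ld}$ (leading term $-2\rho$ from \eqref{relations}), extracting the coefficient of $r^2$ in $4K=(2\sqrt K)^2=\tfrac4{r^2}+\cdots+(4k^{(3)}+(k^{(1)})^2)r^2+\cdots$, with $k^{(1)}=2\rho$, gives $k^{(3)}$.

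I would then assemble $k^{(3)}-\tfrac14-h^{(3)}$ and integrate. Here $\int_{S^2}\rho\,dS^2=0$ and $\int_{S^2}D^2\rho\,dS^2=0$, both since $3\rho=\tilde\nabla^a\beta_a$ and $5D^2\rho=\tilde\nabla^a D^2\beta_a$ are exact divergences by Lemma \ref{D_divergence}; every divergence and odd-parity term then drops, and the $|\beta|^2$ contributions coming from $|H|^2$, from $l^{ab}n_{ab}$, and from the curvature contraction cancel inside $\int k^{(3)}$, so that the surviving $|\beta|^2$ in the final answer comes only from $h^{(3)}$. What remains are exactly the three integrals $\int W_0^2=\int\rho^2$, $\int|\alpha|^2$ and $\int|\beta|^2$, with the stated coefficients $-\tfrac34$, $-\tfrac1{60}$ and $\tfrac{11}{45}$.

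The main obstacle is the second-order expansion of the tangential Weyl contraction $\sigma^{ac}\sigma^{bd}W_{abcd}$: its $O(r^2)$ coefficient must supply a definite combination of $|\alpha|^2$ and $|\beta|^2$ (the latter being precisely what cancels the $|\beta|^2$ elsewhere in $\int k^{(3)}$). Obtaining it requires the subleading expansion of the tangential components $W_{Lab\underline L}$—which feeds in $D^2\rho$ and the quadratic terms $|\alpha|^2,|\beta|^2$ through the Bianchi and $D$-derivative identities—contracted against the non-round correction $\tfrac13 r^4\alpha_{ab}$ of the induced metric. Keeping the $\kappa^2=1$ background contributions consistent throughout (they shift $h^{(1)}$, $k^{(1)}$ and $n_{ab}$, and ultimately produce the $-\tfrac14$) is where the bookkeeping is most delicate.
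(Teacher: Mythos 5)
Your computation of $h^{(3)}$ is exactly the paper's: $|H|^2=-2(\sigma^{ab}l_{ab})(\sigma^{ab}n_{ab})$, the expansions of Lemma \ref{data}, and reading off the $r^2$ coefficient gives $h^{(3)}=(\sigma^{ab}n_{ab})^{(3)}-\frac{1}{90}|\alpha|^2-\frac{1}{4}((\sigma^{ab}n_{ab})^{(1)})^2$, after which $\int\rho\,dS^2=0$ and $\int D^2\rho\,dS^2=0$ (Lemma \ref{D_divergence}) kill the divergence terms. That half of your plan is complete and correct.

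The gap is in your treatment of $k^{(3)}$. You propose to extract it pointwise from the contracted Gauss equation, but the term you yourself flag as "the main obstacle" --- the $O(r^2)$ coefficient of $\sigma^{ac}\sigma^{bd}W_{abcd}=-2\sigma^{bd}W_{Lbd\underline{L}}$ --- is never computed, and it cannot be assembled from anything stated in the paper: it requires the expansion of the tangential component $W_{Lab\underline{L}}$ up to order $r^4$ (two orders beyond the relation \eqref{relations}, which only gives the $r^2$ coefficient), i.e.\ an analogue of \eqref{exp_LaLN} for a component the paper never expands. This term is not negligible: for your route to reproduce the stated lemma, its trace must integrate to $\frac{13}{90}\int_{S^2}|\alpha|^2\,dS^2-\frac{22}{45}\int_{S^2}|\beta|^2\,dS^2$, a specific nonzero combination; so the claimed cancellation of the $|\beta|^2$ contributions inside $\int k^{(3)}$ is asserted from knowing the answer rather than derived. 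The paper avoids this entirely by never computing $k^{(3)}$ pointwise: since $\int_{\Sigma_r}K\,d\Sigma_r=4\pi$ for every $r$ (Gauss--Bonnet), and $d\Sigma_r=(r^2-\frac{1}{180}r^6|\alpha|^2)\,dS^2+O(r^7)$ follows from the expansion of $\sigma^{ab}l_{ab}$, collecting the $O(r^4)$ terms gives at once $\int_{S^2}\bigl[k^{(3)}+\frac{1}{4}(k^{(1)})^2\bigr]dS^2=\frac{1}{180}\int_{S^2}|\alpha|^2\,dS^2$ with $k^{(1)}=2W_0$, which is all that is needed since only the integral of $k^{(3)}$ enters the lemma. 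If you want to salvage your approach you must either carry out the missing Weyl expansion along the cone (with the frame corrections and Bianchi identities this entails), or replace that step by the Gauss--Bonnet identity; as written, the proof does not close.
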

\begin{proof}
First, we compute $\int k^{(3)}  dS^2$. From \eqref{k_exp}, we have
\[ K= \frac{1}{r^2} + k^{(1)} + k^{(2)}r  + [k^{(3)} + \frac{(k^{(1)})^2}{4}] r^2  + O(r^3).\]
We also have
\[ d \Sigma_r  = (r^2   - \frac{1}{180} r^6 |\alpha|^2) dS^2+O(r^7)\]
from the expansion of $\sigma^{ab} l_{ab}$ in Lemma \ref{data}. By the Gauss--Bonnet theorem $ \int_{\Sigma_r} K d \Sigma_r = 4 \pi$.
Collecting the $O(r^4)$ terms from the left hand side, we have
\[ \int _{S^2} k^{(3)} + \frac{(k^{(1)})^2}{4}  d S^2 = \frac{1}{180} \int_{S^2}  |\alpha|^2 dS^2. \]
Furthermore, $k^{(1)} = 2 W_0$. Hence
\[ \int _{S^2}  k^{(3)} dS^2 =-  \int _{S^2} W_0^2  d S^2 + \frac{1}{180} \int  _{S^2} |\alpha|^2 dS^2. \]
For $h^{(3)}$, we have
\[ h^{(3)} = (\sigma^{ab} n_{ab})^{(3)}- \frac{1}{90} |\alpha|^2 -  \frac{( \sigma^{ab} n_{ab}^{(1)} )^2}{4}.
 \]
Using Lemma \ref{data} and Lemma \ref{D_divergence}, we conclude
\begin{equation}
\begin{split}
 & \int_{S^2} (k^{(3)} - \frac{1}{4} - h^{(3)}) dS^2 \\
 =& -\frac{3}{4}\int_{S^2} W_0^2  d S^2 + \frac{1}{60} \int_{S^2} |\alpha|^2 dS^2- \int _{S^2}(\sigma^{ab} n_{ab})^{(3)} dS^2  + \int_{S^2}\frac{ W_0}{2} dS^2  \\
=&-\frac{3}{4}\int_{S^2} W_0^2  d S^2  -\frac{1}{60} \int _{S^2} |\alpha|^2 dS^2 + \frac{11}{45} \int _{S^2} |\beta|^2 dS^2.
\end{split}
\end{equation}
\end{proof}

\section{Computing the reference Hamiltonian}\label{sec_ref_ham}
In this section, we compute the limit of the second integral in equation \eqref{energy_expression}:
\[ \int_{\Sigma_r}  - \alpha_{H_0} (V ^2 \nabla \tau) d \Sigma_r.\]
First we prove the following lemma about $\alpha_{H_0} $.
\begin{lemma} Let $Y$ be an isometric embedding of $\Sigma_r$ into the AdS sapce such that 
\[
Y^0 = r^3 Y_0^{(3)} +  r^4 Y_0^{(4)} + O(r^5).
\]
We have
\begin{equation}\label{mean_gauge_H_0_two_order}
(\alpha_{H_0} )_a =  \frac{ r^2}{2} \tilde \nabla_a (\tilde \Delta + 2 )Y_0^{(3)}  +   \frac{ r^3}{2} \tilde \nabla_a (\tilde \Delta + 2 )Y_0^{(4)}  + O(r^4)
\end{equation}
and
\begin{equation}\label{div_mean_gauge_H_0_three_order}
\begin{split}div _{\sigma}\alpha_{H_0}=& \frac{1}{2} (r^{-2} \Delta)(r^{-2} \Delta +2) (Y_0^{(3)} + r Y_0^{(4)} +r^2 Y_0^{(5)})  - \frac{r^2}{2} ( \tilde \Delta (\tilde \Delta -1 )Y^{(3)}_0)+\\
&r^2[\tilde{\sigma}^{ac} \tilde{\sigma}^{bd} \AA^{'(3)}_{cd}  \tilde \nabla _b  \tilde \nabla _a Y_0^{(3)} + Y_0 \tilde \Delta Y_0^{(3)}
 + 2 \tilde \nabla W_0 \tilde \nabla Y_0^{(3)} -\frac{1}{2} \tilde \Delta ( W_0 \tilde \Delta Y^{(3)}_0)] +O(r^3) .
\end{split}
\end{equation}
\end{lemma}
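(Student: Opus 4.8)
The plan is to view the reference surface $Y(\Sigma_r)$ as a time-graph over its projection $Y'(\Sigma_r)$ into the totally geodesic slice $\{t=0\}$ --- a copy of hyperbolic $3$-space sitting inside the AdS space --- and to extract $\alpha_{H_0}$ from the connection-form formula derived in the proof of Theorem 6.2 of \cite{Chen-Wang-Yau3}. The key structural observation is that for a surface lying inside $\{t=0\}$ the normal bundle is spanned by the outward spatial normal and the hypersurface-orthogonal Killing field $\partial/\partial t$; since $\partial/\partial t$ is parallel along such a surface, the connection $1$-form $\alpha_{H_0}(\cdot)=\langle\nabla^{AdS}_{(\cdot)}\frac{J_0}{|H_0|},\frac{H_0}{|H_0|}\rangle$ vanishes identically there. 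Consequently $\alpha_{H_0}$ is generated entirely by the time-tilt $Y^0=r^3Y_0^{(3)}+r^4Y_0^{(4)}+O(r^5)$, and the formula of \cite{Chen-Wang-Yau3} expresses it, to the order needed, through the operator $\tilde\nabla_a(\tilde\Delta+2)$ acting on the graph function. This is the same operator, with the constant $2$ recording the first nonzero eigenvalue of $-\tilde\Delta$ on the unit sphere, that already produced \eqref{div_alpha} at leading order in Lemma \ref{optimal_eq_leading}.

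For the first identity \eqref{mean_gauge_H_0_two_order} I would simply record the linear part of that formula to two orders. Since $\alpha_{H_0}$ responds linearly to the graph function at these orders and $Y^0$ begins at $r^3$, the coefficient $Y_0^{(3)}$ enters at order $r^2$ and $Y_0^{(4)}$ at order $r^3$, each through the common operator $\tfrac12\tilde\nabla_a(\tilde\Delta+2)$; the uniform shift of one power of $r$ between $Y^0$ and $\alpha_{H_0}$ is built into the \cite{Chen-Wang-Yau3} formula. This gives \eqref{mean_gauge_H_0_two_order} after routine bookkeeping.

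The substance of the lemma is the divergence formula \eqref{div_mean_gauge_H_0_three_order}, which I would obtain by applying the divergence $div_\sigma=\sigma^{ab}\nabla_a$ to $\alpha_{H_0}$ and expanding with the genuine induced metric rather than the round one. Beyond the leading $\tfrac12\tilde\Delta(\tilde\Delta+2)Y_0^{(3)}$ of \eqref{div_alpha}, three $O(r^2)$ effects must be tracked. First, the divergence must be taken with the induced Laplacian $\Delta$ of $\sigma$ rather than $\tilde\Delta$; its correction is governed by the traceless part $\tfrac13\alpha_{ab}$ of the $r^4$-term of $\sigma$ and by the Christoffel correction \eqref{gamma}, and these are packaged into the operator factor $\tfrac12(r^{-2}\Delta)(r^{-2}\Delta+2)$ applied to $Y_0^{(3)}+rY_0^{(4)}+r^2Y_0^{(5)}$ together with the explicit piece $-\tfrac{r^2}{2}\tilde\Delta(\tilde\Delta-1)Y_0^{(3)}$. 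Second, one must carry the \cite{Chen-Wang-Yau3} formula one order past the linear regime, producing the terms in which the tilt $Y_0^{(3)}$ is coupled to the ambient data: the contraction $\tilde\sigma^{ac}\tilde\sigma^{bd}\AA^{'(3)}_{cd}\tilde\nabla_b\tilde\nabla_a Y_0^{(3)}$ of the slice surface's traceless second fundamental form against the Hessian of $Y_0^{(3)}$, the quadratic self-interaction of the tilt, and --- through $W_0=\rho$ and the formula $Y_0^{(3)}=-\tfrac13W_0+\tfrac{\sum_iC_iP_i}{A}$ of Lemma \ref{y03} --- the terms $2\tilde\nabla W_0\,\tilde\nabla Y_0^{(3)}$ and $-\tfrac12\tilde\Delta(W_0\tilde\Delta Y_0^{(3)})$. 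The main obstacle is exactly this second-order bookkeeping: pushing the connection-form formula of Theorem 6.2 of \cite{Chen-Wang-Yau3} one order beyond the linear response used for \eqref{div_alpha}, while correctly separating the intrinsic Laplacian corrections from the extrinsic coupling to $\AA'$ and keeping every contraction of the Hessian of $Y_0^{(3)}$ in its proper place. Once these are assembled and the leading operator factor is expanded, collecting terms at the stated orders yields \eqref{div_mean_gauge_H_0_three_order}.
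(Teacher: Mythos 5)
Your treatment of \eqref{mean_gauge_H_0_two_order} is fine and matches the paper: both you and the author obtain it by citing the formula in the proof of Theorem 6.2 of \cite{Chen-Wang-Yau3}, viewing the embedded surface as a perturbation of its projection into the hyperbolic slice (where $\alpha_{H_0}$ vanishes, as you correctly note). The problem is \eqref{div_mean_gauge_H_0_three_order}, which is the entire substance of the lemma, and there your proposal has a genuine gap: you never actually perform the second-order computation you describe. You list the terms that must appear --- the coupling to $\AA^{'(3)}$, the quadratic tilt term, the $W_0$ terms --- but these are read off from the statement of the lemma rather than derived; the phrase ``collecting terms at the stated orders yields'' conceals precisely the step whose coefficients are at issue. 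Worse, your explanation of where the $W_0$-coupling terms come from is wrong: you say $2\tilde\nabla W_0\tilde\nabla Y_0^{(3)}$ and $-\tfrac12\tilde\Delta(W_0\tilde\Delta Y_0^{(3)})$ arise ``through the formula $Y_0^{(3)}=-\tfrac13 W_0+\sum_i C_iP_i/A$ of Lemma \ref{y03}.'' But the lemma is stated, and must hold, for an \emph{arbitrary} embedding with $Y^0=r^3Y_0^{(3)}+r^4Y_0^{(4)}+O(r^5)$; $Y_0^{(3)}$ appears in \eqref{div_mean_gauge_H_0_three_order} as a free variable, not after substitution of the optimal-embedding solution. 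Those $W_0$ factors in fact enter through the reference mean curvature expansion $|H_0|=\tfrac2r+(2W_0+1)r+O(r^2)$ (forced by the Gauss curvature of the induced metric $\sigma$, hence by the physical Weyl data) in the terms $\nabla|H_0|\cdot\nabla Y^0$ and $\Delta\bigl(\Delta Y^0/|H_0|\bigr)$; a derivation that sources them from Lemma \ref{y03} would prove a strictly weaker statement and would confuse the roles of the embedding and the reference geometry.

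For contrast, the paper does not attempt the brute-force second-order expansion of the connection form at all. It uses Theorem 5.2 of \cite{Chen-Wang-Yau3}: any surface in the AdS space is a critical point of the quasi-local energy among isometric embeddings into AdS. Writing the energy as $\frak{H}_1-\frak{H}_2$ with $\frak{H}_1=\int V\widehat H\,d\widehat\Sigma$, criticality gives $\tfrac{d}{ds}\big|_{s=0}\frak{H}_1=\tfrac{d}{ds}\big|_{s=0}\frak{H}_2$ for every admissible variation. The left side is computed geometrically on the projected surface $\widehat\Sigma$ (decomposing the variation into tangential part $P_a$ and normal part $f$, using equation (5.6) of \cite{Chen-Wang-Yau3} and the linearized isometric embedding equation), while the right side contains $div\,\alpha_{H_0}$ linearly paired with the arbitrary function $\delta\tau$. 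Equating the two for all $\delta\tau$ yields the closed-form identity
\[
div\,\alpha_{H_0} \;=\; \Delta \frac{\Delta Y^0}{|H_0|} \;+\; div (|H_0| \nabla Y^0) \;-\; \frac{1}{2}\Bigl( (\tfrac{1}{2} |H_0| +r)\sigma^{ab} - \AA^{ab} \Bigr) \nabla_a \nabla_b Y^0 \;+\; O(r^3),
\]
from which \eqref{div_mean_gauge_H_0_three_order} follows by substituting the known expansions of $|H_0|$, $\sigma$, and $\AA'$. If you want to salvage your direct approach, you would need to carry out the second-order perturbation of $\alpha_{H_0}$ honestly (tracking the change of the normal frame, of $J_0/|H_0|$, and of the induced connection to quadratic order in the tilt), keeping $Y_0^{(3)}$ general throughout; as written, your argument establishes neither the coefficients nor the correct dependence of the formula.
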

\begin{proof} 
We may assume, for this lemma alone that $T_0 = \frac{\partial}{\partial_t}$ since the qualities depends on $Y$ but not on $T_0$.
\eqref{mean_gauge_H_0_two_order} follows from the formula in the proof of Theorem 6.2 of \cite{Chen-Wang-Yau3} by treating the image of the isometric embedding as a small perturbation of the
isometric embedding into the hyperbolic space. For \eqref{div_mean_gauge_H_0_three_order}, we use Theorem 5.2 of \cite{Chen-Wang-Yau3} that a surface in the AdS space is always a critical point of the quasi-local energy with respect to other isometric embeddings into the AdS space. That is, we consider a family of isometric embedding $Y(s)$ such that $Y(0)=Y$, we have
\[  \frac{d}{ds}|_{s=0} E(\Sigma, Y(s), \frac{\partial}{\partial t})= 0. \]
As a result, if we consider
\[\frak{H}_1= \int V \widehat H d \widehat \Sigma \]
and
\[\begin{split} \frak{H}_2&= \int  \Big [ \sqrt{(1+V ^2| \nabla \tau|^2) |H_0|^2  V ^2 + div(V ^2 \nabla \tau)^2 } \\
&-   div(V ^2 \nabla \tau)  \sinh^{-1} \frac{ div(V ^2 \nabla \tau) }{V |H_0|\sqrt{1+V ^2| \nabla \tau|^2} }   - V ^2 \alpha_{H_0} (\nabla \tau)  \Big ] d \Sigma\end{split}.\]
Then
\[  \frac{d}{ds}|_{s=0} \frak{H}_1=\frac{d}{ds}|_{s=0}\frak{H}_2,\]
where for the variation of $\frak{H}_2$, it is understood that $H_0$ and $\alpha_{H_0}$ are fixed at their values at the initial surface $Y(0)$. On $Y(0)$, we have
\[
V= \sqrt{1+r^2} + O(r^4).
\]
We consider a family of isometric embedding such that
\[  \frac{d}{ds}|_{s=0}  Y^0 = O(r^3)\]
and
\[  Z^i=  \frac{d}{ds}|_{s=0}  Y^i = O(r^5)\]
We conclude that
\[   \frac{d}{ds}|_{s=0}  Y^4 = O(r^6)\]
and
\[   \frac{d}{ds}|_{s=0}  V = O(r^6)\]
In terms of the static coordinate, we have
\[  y^0 = \sqrt{1+r^2} t  + O(r^6) \]
and
\[  Y^0 = \sqrt{1+r^2} \tau  + O(r^6).  \]
Let
\[
\delta \tau =  \frac{d}{ds}|_{s=0}  \tau
\]
\[ (\delta \hat \sigma)_{ab}  = (1+r^2) (\partial_a \tau \partial_b \delta \tau + \partial_b \tau \partial_a  \delta \tau ) + O(r^{10})   \]
and $Z^i$ satisfies the linearized isometric embedding equation
\[    \sum_i \partial_a Y^i \partial_b Z^i +\partial_b Y^i \partial_a Z^i = (1+r^2) (\partial_a \tau \partial_b \delta \tau + \partial_b \tau \partial_a  \delta \tau ) + O(r^{10}) \]
Decomposing $Z$ into the tangential part $P_a$ and normal part $ f e_3$ to the surface $\widehat \Sigma$
From equation (5.6) of \cite{Chen-Wang-Yau3}, we have
\begin{equation}
\delta \widehat H +\frac{1}{2} \hat h^{ab}  (\delta \hat \sigma)_{ab} =- \widehat \Delta f +2 f +   \hat \nabla^b( P^c \hat h_{cb}).
\end{equation}
As a result, we have
\[  \delta (V \widehat H) = \sqrt{1+r^2} (-\frac{1}{2} \hat h^{ab}  (\delta \hat \sigma)_{ab} - \widehat \Delta f +2 f +   \hat \nabla^b( P^c \hat h_{cb})) + 2 f + O(r^6) \]
and
\[
\begin{split}
   &\frac{d}{ds}|_{s=0} \frak{H}_1\\
= &\int  \sqrt{1+r^2} (-\frac{1}{2} \hat h^{ab}  (\delta \hat \sigma)_{ab} - \widehat \Delta f +2 f +   \hat \nabla^b( P^c \hat h_{cb})) + 2 f  +  \frac{1}{2} \hat \sigma^{ab}   (\delta \hat \sigma)_{ab}   \sqrt{1+r^2}   \widehat H   d \hat \Sigma +O(r^8)\\
= & \int  \sqrt{1+r^2}   \frac{1}{2} (\widehat H \hat \sigma^{ab} - \hat h^{ab})(\delta \hat \sigma)_{ab}  + 4f d \hat \Sigma +O(r^8)\\
\end{split}
 \]
Rewriting the linearized isometric embedding equation in terms of $P^a$ and $f$, we have
\[  \hat \nabla_a P_b + \hat \nabla_b P_a + 2f \hat h_{ab} = (\delta \hat \sigma)_{ab} \]
Taking the trace and integrating, we get
\[  \int 2 f \widehat H  d \hat \Sigma  = \int   \hat \sigma^{ab}  (\delta \hat \sigma)_{ab}   d \hat \Sigma   \]
It follows that
\[  \int 4f  d \hat \Sigma  = \int r   \hat \sigma^{ab}  (\delta \hat \sigma)_{ab}   d \hat \Sigma  + O(r^8) \]
and
\[
\begin{split}
\frac{d}{ds}|_{s=0} \frak{H}_1 =  & \int  \sqrt{1+r^2}   \frac{1}{2} (\widehat H \hat \sigma^{ab} - \hat h^{ab})(\delta \hat \sigma)_{ab}  + r   \hat \sigma^{ab}  (\delta \hat \sigma)_{ab}  d \hat \Sigma +O(r^8) \\
= &    \int  \delta \tau (1+r^2)^{\frac{3}{2}}  \hat \nabla _a \hat \nabla_b \tau  \frac{1}{2} (\widehat H \hat \sigma^{ab} - \hat h^{ab} + r   \hat \sigma^{ab}  )  d \hat \Sigma  +O(r^8)  \\
= &    \int   \delta \tau  (1+r^2)  \nabla _a  \nabla_b Y^0 \frac{1}{2} (\widehat H \hat \sigma^{ab} - \hat h^{ab} + r   \hat \sigma^{ab}  ) d \hat \Sigma   +O(r^8).
\end{split}
\]
On the other hand, direct computation (see Theorem 5.2 of \cite{Chen-Wang-Yau3}) gives that
\[
   \frac{d}{ds}|_{s=0} \frak{H}_2
= \int  (1+r^2)  \delta \tau  \left [   div (|H_0| \nabla Y^0) + \Delta \frac{\Delta Y^0}{|H_0|}    -div \alpha_{H_0}  \right].
 \]
We conclude that
\[
div \alpha_{H_0}  =  \Delta \frac{\Delta Y^0}{|H_0|}  +div (|H_0| \nabla Y^0)     - \frac{1}{2} ( (\frac{1}{2} |H_0| +r)\sigma^{ab} - \AA^{ab}  )  )  \nabla _a  \nabla_b Y^0  + O(r^3).
\]
since
\[  \frac{d}{ds}|_{s=0} \frak{H}_1=\frac{d}{ds}|_{s=0}\frak{H}_2,\]
for any choice of $\delta \tau$
\end{proof}

\begin{proposition} \label{pv0}
\[    \lim_{r \to 0} -r^{-5 }\int_{\Sigma_r}  \alpha_{H_0} (V ^2 \nabla \tau)d \Sigma_r
=  \frac{4}{3} A \int _{S^2} W_0^2 dS^2 - 10  \frac{C_iC_j}{A}\int _{S^2}\tilde X^i W_0 P_j dS^2.
\]
\end{proposition}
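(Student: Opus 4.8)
The plan is to convert the integral into a pairing of a scalar potential with $div_{\sigma}\alpha_{H_0}$ by integration by parts on $S^2$, and then evaluate everything using the expansions already in hand. First I would expand $V^2\nabla\tau=-T_0^\top$ to order $r^3$, directly from the formula for $-T_0^\perp$ used in the proof of Lemma \ref{lemma_7_1}, substituting $Y^0=r^3Y_0^{(3)}+O(r^4)$, $Y^i=r\tilde X^i+r^3Y_i^{(3)}+O(r^4)$ and $Y^4=\sqrt{1+r^2}+O(r^3)$. The outcome I expect is that the $\tilde\sigma$-gradient part of $V^2\nabla\tau$ has potential $\Psi=rC_i\tilde X^i+r^3\big(AY_0^{(3)}+\tfrac12C_i\tilde X^i+C_iY_i^{(3)}\big)+O(r^4)$, with no gradient part at order $r^2$ (the $O(r^2)$ contribution $D_p\epsilon_{pqi}\tilde X^q\tilde\nabla_a\tilde X^i$ being a rotation field, hence $\tilde\sigma$-divergence-free, since $\tilde\Delta\tilde X^i=-2\tilde X^i$ kills it), plus a $\tilde\sigma$-divergence-free remainder.

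The next step uses the observation that, through order $r^3$, $\alpha_{H_0}$ is a pure $\tilde\sigma$-gradient: by \eqref{mean_gauge_H_0_two_order} we have $(\alpha_{H_0})_a=\tfrac{r^2}{2}\tilde\nabla_a(\tilde\Delta+2)Y_0^{(3)}+\tfrac{r^3}{2}\tilde\nabla_a(\tilde\Delta+2)Y_0^{(4)}+O(r^4)$. Consequently, when paired against $V^2\nabla\tau$ through the leading $r^{-2}\tilde\sigma^{ab}$ part of $\sigma^{ab}$, the divergence-free remainder integrates to zero by the orthogonality of gradients and divergence-free fields on $S^2$, and the contributions of $Y_0^{(4)},Y_0^{(5)}$ drop out entirely. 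Only the gradient potential $\Psi$ contributes, together with the inverse-metric correction $\sigma^{(0)ab}=-\tfrac13\alpha^{ab}$. I would then integrate by parts to obtain $-\int_{\Sigma_r}\Psi\, div_{\sigma}\alpha_{H_0}\,d\Sigma_r$ up to this correction, and insert the explicit expansion \eqref{div_mean_gauge_H_0_three_order}. Its leading term $\tfrac12\tilde\Delta(\tilde\Delta+2)Y_0^{(3)}$ pairs with $\Psi^{(3)}$, while its $O(r^2)$ nonlinear bracket pairs with $\Psi^{(1)}=C_i\tilde X^i$.

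Substituting $Y_0^{(3)}=-\tfrac13W_0+\tfrac{C_iP_i}{A}$ from Lemma \ref{y03}, $Y_i^{(3)}$ from Lemma \ref{yi3} (with $\bar Ric(L,L)=0$ in vacuum), and using $\tilde\Delta W_0=-6W_0$, $\tilde\Delta P_i=-12P_i$, $\tilde\Delta\tilde X^i=-2\tilde X^i$, the term proportional to $A$ comes out cleanly: pairing $AY_0^{(3)}$ with the $W_0$-part of $\tfrac12\tilde\Delta(\tilde\Delta+2)Y_0^{(3)}$ gives $\tfrac43A\int_{S^2}W_0^2\,dS^2$, and no other piece carries a bare factor of $A$. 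Before extracting the $r^{-5}$ limit I would also verify that the $O(r^3)$ and $O(r^4)$ parts of the integral vanish (by orthogonality of spherical harmonics of different degree and by parity, e.g. $\int_{S^2}\tilde X^j W_0\,dS^2=0$ and $\int_{S^2}\tilde X^j P_i\,dS^2=0$), so that the limit indeed exists.

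The \emph{main obstacle} is assembling the $C_iC_j$ term, which is not produced by any single pairing. Several contributions appear — the leftover $-\tfrac{60C_iC_j}{A}\int_{S^2}P_iP_j\,dS^2$ from the linear pairing, the nonlinear terms of $div_{\sigma}\alpha_{H_0}$ paired against $C_i\tilde X^i$, the $C_iY_i^{(3)}$ part of $\Psi^{(3)}$ paired with the $P$-part of the linear divergence, and the $-\tfrac13\alpha^{ab}$ metric correction — none of which is individually of the target form. These must be combined into the single term $-10\tfrac{C_iC_j}{A}\int_{S^2}\tilde X^i W_0 P_j\,dS^2$. The key reduction is the identity $\int_{S^2}P_iP_j\,dS^2=-\tfrac16\int_{S^2}\tilde X^i W_0 P_j\,dS^2$, which follows from the definition $P_i=\tfrac{1}{15}\bar W_{0m0i}\tilde X^m-\tfrac16W_0\tilde X^i$ in \eqref{W_P} together with $\int_{S^2}\tilde X^m P_i\,dS^2=0$; one must also check that all $C_i$-linear cross terms (such as $\int_{S^2}\beta^c\tilde\nabla_c\tilde X^i\,W_0\,dS^2$) cancel by parity. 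Verifying these cancellations and the collapse to a single coefficient is the most delicate part of the argument.
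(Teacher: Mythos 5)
Your reduction coincides with the paper's: after discarding the $B_i$-, $D_p$- and $Y_0^{(4)},Y_0^{(5)}$-contributions (using that $\alpha_{H_0}$ is a gradient to the relevant order and that $(\tilde\Delta+2)\tilde X^i=0$) and integrating by parts, everything is tested against $div_\sigma \alpha_{H_0}$. Your linear pairing is also correct: since $\frac12\tilde\Delta(\tilde\Delta+2)Y_0^{(3)}=-4W_0+60\frac{C_kP_k}{A}$, the pairing with $AY_0^{(3)}$ alone already produces $\frac43A\int W_0^2+60\frac{C_iC_j}{A}\int P_iP_j$, which by your (correct) identity $\int P_iP_j=-\frac16\int \tilde X^iW_0P_j$ is exactly the claimed right-hand side; hence \emph{all} remaining contributions must cancel. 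It is precisely at this point that the paper stops computing and quotes Proposition 8.1 of \cite{Chen-Wang-Yau3}, whereas you propose to verify the cancellation by hand. That is where your argument has a genuine gap.

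The gap is your claim that the $C_i$-linear cross terms ``cancel by parity.'' They do not. Take the one you name, which arises from pairing $C_iY_i^{(3)}$ with the $-4W_0$ part of the linear divergence: by \eqref{W_P} one has $\beta^a\tilde\nabla_a\tilde X^i=W_i-15P_i-\frac32\rho\tilde X^i$, and since $\int P_iW_0=0$ (different eigenvalues) and $\int W_0^2\tilde X^i=0$ (parity), it equals $\int W_iW_0\,dS^2$. Both $W_i$ and $W_0$ are \emph{even} under the antipodal map, so parity is silent here; in fact, using $\int_{S^2}\tilde X^j\tilde X^k\tilde X^m\tilde X^n\,dS^2=\frac{4\pi}{15}(\delta^{jk}\delta^{mn}+\delta^{jm}\delta^{kn}+\delta^{jn}\delta^{km})$ and the trace-freeness of the Weyl tensor, one finds
\begin{equation*}
\int_{S^2} W_iW_0\,dS^2=\frac{8\pi}{15}\sum_{m,n}\bar W_{0m0n}\bar W_{0min},
\end{equation*}
a nonzero multiple of the Bel--Robinson momentum component $Q(e_0,e_0,e_0,e_i)$. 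This is generically nonzero (for instance $\bar W_{0m0n}=\mathrm{diag}(1,1,-2)$ and $\bar W_{0kij}=\epsilon_{ijm}B_{km}$ with $B_{13}=B_{31}=1$ give $\int W_2W_0=-\frac{8\pi}{5}$), which is also why the very same quantity survives in Proposition \ref{pvr} and in the final formula of Section \ref{sec_eva_energy}. Consequently, in the present proposition this term can only be killed against the quadratic $O(r^2)$ bracket of \eqref{div_mean_gauge_H_0_three_order} tested against $C_i\tilde X^i$ (the terms $\tilde\sigma^{ac}\tilde\sigma^{bd}\AA^{'(3)}_{cd}\tilde\nabla_b\tilde\nabla_aY_0^{(3)}$, $2\tilde\nabla W_0\cdot\tilde\nabla Y_0^{(3)}$, $-\frac12\tilde\Delta(W_0\tilde\Delta Y_0^{(3)})$ with $Y_0^{(3)}$ replaced by its $-\frac13W_0$ part), together with your inverse-metric correction; the analogous issue arises for the $C_iC_j$ cross terms such as $\int Y_i^{(3)}P_k$. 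Verifying these cancellations is a genuine curvature computation, not a symmetry argument --- it is exactly the content of the cited Proposition 8.1 of \cite{Chen-Wang-Yau3} --- and without it your proof does not close.
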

\begin{proof}
We compute

\[
\begin{split}
    &  \int_{\Sigma_r } \alpha_{H_0} (V ^2 \nabla \tau)d \Sigma_r   \\
 = &   \int_{\Sigma_r } \alpha_{H_0} \left [ (A Y^0 + C_i Y^i)\nabla Y^4 + (AY^4 + B_i Y^i)\nabla Y^0 + (C_i Y^4 + B_i Y^0 + D_p  \epsilon_{pqi}  Y^q)\nabla Y^i \right] d \Sigma_r\\
 = &   \int_{\Sigma_r } \alpha_{H_0} \left [  A\nabla Y^0 + (C_i Y^4 + D_p  \epsilon_{pqi}  Y^q)\nabla Y^i \right] d \Sigma_r  + O(r^6)\\
 = &  \int_{\Sigma_r } \alpha_{H_0} \left [  A\nabla Y^0 + (C_i (1+ \frac{r^2}{2})  + D_p  \epsilon_{pqi}  Y^q)\nabla Y^i \right] d \Sigma_r  + O(r^6)\\
 = &  \int_{\Sigma_r } \alpha_{H_0} \left [  A\nabla Y^0 + C_i \nabla Y^i \right] d \Sigma_r + O(r^6).
 \end{split}
\]
In the last equation, we used \eqref{mean_gauge_H_0_two_order} and that $Y_0^{(3)}$ is perpendicular to $\tilde X^i$. Integrating by parts, we get
\[   - \int_{\Sigma_r } \alpha_{H_0} (V ^2 \nabla \tau)d \Sigma_r =   \int_{\Sigma_r }  (AY^0 + C_i Y^i) div  \alpha_{H_0}+ O(r^6) \]
The proposition now follows from Proposition 8.1 of \cite{Chen-Wang-Yau3}.
\end{proof}
\section{Computing the Physical Hamiltonian}\label{sec_phy_ham}

In this section, we compute the limit of the third integral in equation \eqref{energy_expression}:
\[ -\int_{\Sigma_r} \alpha_{H} (V ^2 \nabla \tau)d \Sigma_r.\]
\begin{proposition}\label{pvr}
\begin{align*}
    \lim_{r \to \infty} -r^{-5 } \int_{\Sigma_r } \alpha_{H} (V ^2 \nabla \tau)d \Sigma_r
  =  \int_{S^2} \left[\frac{4A}{3}W_0^2 +\frac{2}{3} C_iW_iW_0- (C_i \tilde {X}^i) |\beta|^2 \right]dS^2.
\end{align*}
\end{proposition}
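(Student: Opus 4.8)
The plan is to mirror the computation of the reference Hamiltonian in Proposition \ref{pv0}, now with the physical connection form $\alpha_H$ in place of $\alpha_{H_0}$. First I would use the decomposition of $V^2\nabla\tau$ in terms of the embedding functions $Y^0,Y^i,Y^4$ exactly as in the proof of Proposition \ref{pv0}. Since $\alpha_H = O(r^2)$ has the same order as $\alpha_{H_0}$, the identical bookkeeping (the terms carrying $\nabla Y^4$, $B_i$, or $D_p$ being of higher order or odd) shows that
\[ -\int_{\Sigma_r}\alpha_H(V^2\nabla\tau)\,d\Sigma_r = -\int_{\Sigma_r}\alpha_H\big(A\nabla Y^0 + C_i\nabla Y^i\big)\,d\Sigma_r + O(r^6). \]
Integrating by parts over the closed surface $\Sigma_r$ then reduces the problem to
\[ \int_{\Sigma_r}\big(AY^0 + C_iY^i\big)\,div_\sigma\alpha_H\,d\Sigma_r + O(r^6), \]
so everything rests on the vacuum expansion of $div_\sigma\alpha_H$.

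The new analytic input, replacing the appeal to Proposition 8.1 of \cite{Chen-Wang-Yau3} used for $\alpha_{H_0}$, is to carry the expansion of $div_\sigma\alpha_H$ one order beyond Lemma \ref{non_physical_data}. In the null frame $\{L,\underline{L}\}$ the mean-curvature connection form is controlled by $\eta_a$ together with the logarithmic derivative of the ratio of the two null expansions $\sigma^{cd}l_{cd}$ and $\sigma^{cd}n_{cd}$ (this is how Lemma \ref{non_physical_data} is obtained in \cite{Chen-Wang-Yau2}). Feeding the vacuum expansions of $\eta_a$, $\sigma^{ab}l_{ab}$ and $\sigma^{ab}n_{ab}$ from Lemma \ref{data}, together with the Christoffel expansion \eqref{gamma} needed to take the divergence, produces $div_\sigma\alpha_H$ to the required order. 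The leading coefficient is $(div_\sigma\alpha_H)^{(0)} = -4W_0$ (the vacuum specialization of Lemma \ref{non_physical_data}, with $\bar R_{L\underline{L}L\underline{L}} = \rho - 1$, $\bar Ric(L,L) = 0$, $\bar Ric(L,\underline{L}) = 3$), while the genuinely new quantity is the coefficient $(div_\sigma\alpha_H)^{(2)}$, which carries the $D^2\rho$, $|\alpha|^2$ and $|\beta|^2$ contributions.

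With these expansions I would extract the $O(r^5)$ part of the integral. Since $AY^0 + C_iY^i = rC_i\tilde X^i + r^3(AY_0^{(3)} + C_iY_i^{(3)}) + O(r^4)$ and $d\Sigma_r = r^2\,dS^2 + O(r^6)$, only two products contribute: $rC_i\tilde X^i$ paired with $(div_\sigma\alpha_H)^{(2)}$, and $r^3(AY_0^{(3)} + C_iY_i^{(3)})$ paired with $(div_\sigma\alpha_H)^{(0)} = -4W_0$. I would then insert $Y_0^{(3)}$ from Lemma \ref{y03} and $Y_i^{(3)}$ from Lemma \ref{yi3} (with $\bar Ric(L,L) = 0$) and integrate over $S^2$. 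Here eigenfunction orthogonality does much of the organizing: $W_0 = \rho$ is a $(-6)$-eigenfunction, each $P_i$ a $(-12)$-eigenfunction, and each $\tilde X^i$ a $(-2)$-eigenfunction, so that $\int_{S^2}W_0 P_i\,dS^2 = 0$ and, by parity, $\int_{S^2}W_0^2\tilde X^i\,dS^2 = 0$, killing several terms. The surviving pieces are reduced to $\int W_0^2$, $\int W_0 W_i$ and $\int (C_i\tilde X^i)|\beta|^2$ using the definitions \eqref{W_P}, the contraction identities \eqref{Weyl_derivatives2}, and Lemma \ref{D_divergence}, yielding the three terms in the statement. In particular the $\frac{4A}{3}W_0^2$ term arises from the $-\frac13 W_0$ part of $AY_0^{(3)}$ paired against $-4W_0$.

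I expect the main obstacle to be the computation of $(div_\sigma\alpha_H)^{(2)}$ and the ensuing $S^2$-integrals. Obtaining $\alpha_H$ to the needed order requires $\eta_a$ to $O(r^4)$ (from \eqref{exp_eta}) and careful treatment of the logarithmic term, whose next order mixes $\tilde\nabla\rho$, $D^2\rho$, $|\alpha|^2$ and $|\beta|^2$; taking $div_\sigma$ then also brings in the $r^2$ Christoffel correction $\gamma_{ab}^{(2)c}$ of \eqref{gamma}. The delicate bookkeeping is to show that, after integrating by parts on $S^2$, the many Weyl-quadratic terms collapse exactly to the coefficients $\frac{4A}{3}$, $\frac{2}{3}C_i$ and $-(C_i\tilde X^i)$. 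Concretely, the contribution of $(div_\sigma\alpha_H)^{(0)} = -4W_0$ against $C_iY_i^{(3)}$ already supplies a term $\frac{4}{3}C_i\int_{S^2}W_0\,\beta^c\tilde\nabla_c\tilde X^i\,dS^2$, and one must show that $(div_\sigma\alpha_H)^{(2)}$ supplies precisely the $\underline{\beta}$ contribution and the correction needed to assemble $W_i = \frac12(\beta^b - 2\underline{\beta}^b)\tilde\nabla_b\tilde X^i$ and to isolate the pure $-(C_i\tilde X^i)|\beta|^2$ term; this recombination is where the identities \eqref{Weyl_derivatives1}--\eqref{Weyl_derivatives2} and Lemma \ref{D_divergence} must be used most carefully.
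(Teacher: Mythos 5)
Your overall skeleton (decompose $V^2\nabla\tau$ through the embedding functions, reduce to $\int \alpha_H(A\nabla Y^0+C_i\nabla Y^i)$, then evaluate) matches the paper, but the reduction step contains a genuine gap: you dismiss the terms carrying $\nabla Y^4$, $B_i$ or $D_p$ as ``of higher order or odd,'' claiming the bookkeeping is identical to Proposition \ref{pv0}. It is not. For $\alpha_{H_0}$ those terms drop because $\alpha_{H_0}$ is a gradient to the relevant orders (equation \eqref{mean_gauge_H_0_two_order}), so its pairing with the divergence-free rotation fields integrates to zero; $\alpha_H$ enjoys no such property, since it agrees with $\eta$ only up to an exact form and $\eta^{(2)}=\tfrac13\beta_a$, $\eta^{(3)}=\tfrac14 D\beta_a$ are not gradients. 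Counting orders, $D_p\epsilon_{pqi}Y^q\nabla Y^i$ pairs with $\alpha_H^{(2)}$ at order $r^4$ and with $\alpha_H^{(3)}$ at order $r^5$ --- the first must vanish for the limit to exist, the second enters the answer --- and parity does not kill either integral: under the antipodal map $L\mapsto 2\underline L$, $\underline L\mapsto \tfrac12 L$, so $\beta_a$ is carried to a multiple of $\underline\beta_a$ rather than $\pm\beta_a$, and no sign cancellation occurs. What actually kills them, and what the paper's proof consists of, are the identities $4\beta_b=\tilde\nabla^a\alpha_{ab}$ and $5D\beta_b=\tilde\nabla^a D\alpha_{ab}$ (from \eqref{Weyl_derivatives2} and Lemma \ref{D_divergence}) together with $\int_{S^2}\tilde\nabla^a\alpha_{ab}K^b\,dS^2=-\int_{S^2}\alpha_{ab}\tilde\nabla^aK^b\,dS^2=0$ for the rotation Killing fields $K$, $\alpha_{ab}$ being symmetric and traceless. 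Likewise the correction $\tfrac{r^2}{2}C_i\nabla Y^i$ from $Y^4=1+\tfrac{r^2}{2}+\cdots$ contributes exactly at order $r^5$, namely $\tfrac{C_i}{2}\int_{S^2}\alpha_H^{(2)}(\tilde\nabla\tilde X^i)\,dS^2$, whose vanishing requires the vacuum condition (via $\int_{S^2}\tilde X^i\,\tilde\nabla^a(\alpha_H^{(2)})_a\,dS^2\propto\int_{S^2}\rho\,\tilde X^i\,dS^2=0$), not an order count.

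On the main term your route also diverges from the paper's, and there it is incomplete rather than wrong. The paper does not recompute anything: it quotes Proposition 9.1 of \cite{Chen-Wang-Yau2}, which states precisely that $\int_{\Sigma_r}\alpha_H(A\nabla Y^0+C_i\nabla Y^i)\,d\Sigma_r$ equals $r^5$ times the claimed integrand up to $O(r^6)$ (legitimate here because in vacuum $Y_0^{(3)}$, $Y_i^{(3)}$ coincide with the Minkowski-case data). You instead integrate by parts and propose to expand $div_\sigma\alpha_H$ two orders beyond Lemma \ref{non_physical_data}; your leading coefficient $(div_\sigma\alpha_H)^{(0)}=-4W_0$, the identification of the two products contributing at $O(r^5)$, and the origin of the $\tfrac{4A}{3}W_0^2$ term are all correct, but the decisive computation of $(div_\sigma\alpha_H)^{(2)}$ and the collapse to $\tfrac23 C_iW_iW_0-(C_i\tilde X^i)|\beta|^2$ is left as an acknowledged ``obstacle,'' and you never check that the $O(r^4)$ coefficient $\int_{S^2}C_i\tilde X^i(div_\sigma\alpha_H)^{(1)}\,dS^2$ vanishes (it does, via $\int_{S^2}D\rho\,\tilde X^i\,dS^2=0$), without which the limit would not even be finite. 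So the proposal is a plausible outline of the longer self-contained argument, but as written it fails at the rotation/$Y^4$ terms and leaves its central computation unexecuted.
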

\begin{proof}
We compute

\[
\begin{split}
    &  \int_{\Sigma_r } \alpha_{H} (V ^2 \nabla \tau)d \Sigma_r   \\
 = &   \int_{\Sigma_r } (\alpha_H) \left [  A\nabla Y^0 + (C_i Y^4 + D_p  \epsilon_{pqi}  Y^q)\nabla Y^i \right] d \Sigma_r  + O(r^6)\\
 = &  \int_{\Sigma_r } (\alpha_H) \left [  A\nabla Y^0 + (C_i (1+ \frac{r^2}{2})  + D_p  \epsilon_{pqi}  Y^q)\nabla Y^i \right] d \Sigma_r  + O(r^6)\\
 \end{split}
\]
From Proposition 9.1 of \cite{Chen-Wang-Yau2}, we have
\[  \int_{\Sigma_r } (\alpha_H) \left [  A\nabla Y^0 + C_i  \nabla Y^i \right] d \Sigma_r   = r^5     \int_{S^2} \left[\frac{4A}{3}W_0^2 +\frac{2}{3}C_i W_iW_0- (C_i \tilde {X}^i) |\beta|^2 \right]dS^2
 + O(r^6)\]
As a result, it suffices to show that
\[
 \int_{\Sigma_r } (\alpha_H) \left [  (\frac{r^2C_i}{2} + D_p  \epsilon_{pqi}  Y^q)\nabla Y^i \right] d \Sigma_r  = O(r^6)
\]
or simply that
\[
\begin{split}
\int_{S^2}  \alpha_H^{(2)}( \tilde \nabla \tilde X^i) = & 0\\
\int_{S^2}  \alpha_H^{(2)}(  \epsilon_{pqi} \tilde X^q \tilde \nabla \tilde X^i) = & 0\\
\int_{S^2}  \alpha_H^{(3)}(  \epsilon_{pqi} \tilde X^q \tilde \nabla \tilde X^i) = & 0.
\end{split}
\]
The first integral vanishes due to Theorem 6.1 and that the spacetime is vacuum. For the other two integrals, we recall that
\begin{equation} \eta_a = \frac{r^2}{3} \beta_a +\frac{r^3}{4}   D\beta_a +r^4 [\frac{1}{10}   D^2 \beta_a  -\frac{1}{45} \alpha_{ab} \beta^b ] +O(r^5), \end{equation}
$\alpha_H$ is the same as $\eta$ up to a gradient vector field and the rotation Killing field $ \epsilon_{pqi} \tilde X^q \tilde \nabla \tilde X^i$ is divergence free. Hence, it suffice to show that
\[
\begin{split}
\int_{S^2} \beta (  \epsilon_{pqi} \tilde X^q \tilde \nabla \tilde X^i) = & 0\\
\int_{S^2} D\beta (  \epsilon_{pqi} \tilde X^q \tilde \nabla \tilde X^i) = & 0.
\end{split}
\]
From Lemma 4.7 and 4.9, we have
\[
\begin{split}
4 \int_{S^2} \beta (  \epsilon_{pqi} \tilde X^q \tilde \nabla \tilde X^i) = &\int_{S^2} \tilde \nabla^a \alpha_{ab}  (  \epsilon_{pqi} \tilde X^q \tilde \nabla \tilde X^i)  = 0 \\
5 \int_{S^2} D\beta (  \epsilon_{pqi} \tilde X^q \tilde \nabla \tilde X^i) =& \int_{S^2} \tilde \nabla^a D \alpha_{ab}  (  \epsilon_{pqi} \tilde X^q \tilde \nabla \tilde X^i)  = 0.
\end{split}
\]
\end{proof}
\section{Evaluating the energy}\label{sec_eva_energy}
From Lemma \ref{lemmaenergy}, Proposition \ref{pe2}, \ref{pv0} and \ref{pvr} and Section 10 of \cite{Chen-Wang-Yau2}, we conclude immediately that
\[
\begin{split}
     & \lim_{r \to 0} r^{-5} E(\Sigma_r, Y_r(T_0),T_0)= \frac{1}{90}  \Big[  Q(e_0,e_0,e_0,Ae_0 + C_i e_i)+ \frac{\sum_{m,n} \bar W_{0m0n}^2}{2A}\Big].
\end{split}
\]
To minimize $E(\Sigma_r, Y_r(T_0),T_0)$ among choices of $T_0$, let
\[ E(\Sigma_r, Y_r(T_0),T_0) =  E(\Sigma,Y(T_0),T_0)^{(5)} r^5 + O(r^6).\]
We consider the following vector
\[ U=(\frac{1}{2}\sum \bar{W}_{0kmn}^2+  \sum \bar{W}_{0m0n}^2, 2\sum \bar{W}_{0m0n}\bar{W}_{0min}).\] 
$U$ is future directed non-spacelike. It is timelike unless the Weyl curvature is of the form given by \cite[Lemma 11.2]{Chen-Wang-Yau1}.

It is easy to see that the energy functional $E(\Sigma,Y(T_0),T_0)^{(5)}$ is non-negative. Moreover, it is positive and proper when $U$ is timelike.
Hence, when $U$ is timelike, there is at least one choice of $T_0=(A,C_i)$ which minimizes $E(\Sigma,Y(T_0),T_0)^{(5)}$. We show that under the same condition, the minimizer is unique. 
\begin{lemma}\label{minimize_direction}
If  $V$ is timelike, there is a unique $(A, C_i)$ that minimizes $E(\Sigma,X(T_0),T_0)^{(5)}$.
\end{lemma}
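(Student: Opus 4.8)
The plan is to reduce the two–parameter minimization to an explicit one–dimensional strictly convex problem. Writing $U=(U_0,U_i)$ for the components of the vector introduced above, so that $Q(e_0,e_0,e_0,Ae_0+C_ie_i)=AU_0+\sum_i C_iU_i$, and setting $c=\sum_{m,n}\bar W_{0m0n}^2\ge 0$, the quantity to be minimized is, up to the positive factor $\tfrac{1}{90}$,
\[
F(A,\vec C)=AU_0+\sum_i C_iU_i+\frac{c}{2A}.
\]
The admissible set is $\{(A,\vec C):A\ge\sqrt{1+|\vec C|^2}\}$: one inclusion is \eqref{obser_inequlaity}, while conversely every such pair is realized by an observer Killing field, for instance by taking $\vec B\parallel\vec C$ and $\vec D=0$ in Proposition \ref{observer_constraint}, with $|\vec B|^2=A^2-1-|\vec C|^2$. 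I record the algebraic relation $U_0=\tfrac12\sum\bar W_{0kmn}^2+c\ge c$, which will be needed in the degenerate case, and the standing hypothesis that $U$ is timelike, i.e. $U_0>|\vec U|$ where $|\vec U|^2=\sum_iU_i^2$.

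First I would treat the generic case $\vec U\ne 0$. Fix $A\ge 1$ and minimize over $\vec C$ in the ball $|\vec C|\le\sqrt{A^2-1}$ (which keeps the pair admissible). Since $\sum_iC_iU_i$ is linear and $\vec U\ne0$, its minimum over this ball is attained at the unique boundary point $\vec C=-\sqrt{A^2-1}\,\vec U/|\vec U|$; consequently every minimizer of $F$ must have $\vec C$ of this form, with $|\vec C|=\sqrt{A^2-1}$. Substituting reduces the problem to minimizing
\[
\tilde F(A)=AU_0-\sqrt{A^2-1}\,|\vec U|+\frac{c}{2A},\qquad A\ge 1.
\]
A direct computation gives $\tilde F''(A)=|\vec U|(A^2-1)^{-3/2}+cA^{-3}>0$, so $\tilde F$ is strictly convex on $(1,\infty)$. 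Moreover $\tilde F'(A)\to-\infty$ as $A\to 1^+$ and $\tilde F'(A)\to U_0-|\vec U|>0$ as $A\to\infty$, so $\tilde F'$ has a unique zero $A^\ast\in(1,\infty)$, which is the unique minimizer. This pins down $(A^\ast,\vec C^\ast)$ uniquely.

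It then remains to dispose of the degenerate direction $\vec U=0$. Here $F=AU_0+c/(2A)$ is independent of $\vec C$, and its unconstrained minimizer in $A$ sits at $A_0=\sqrt{c/(2U_0)}\le 1/\sqrt2<1$ by the inequality $U_0\ge c$. Hence $F$ is strictly increasing on $[1,\infty)$, so on the admissible set its minimum is forced to $A=1$, which in turn forces $\vec C=0$; the minimizer $(1,0)$ is again unique.

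The main point to watch is that $F$ itself is only convex, not strictly convex, since it is affine in $\vec C$; uniqueness therefore cannot be read off from convexity of the objective alone. The mechanism producing uniqueness is the strict convexity of the constraint boundary — the hyperboloid $A^2-|\vec C|^2=1$ — which I exploit by collapsing the $\vec C$ variable onto that boundary and reducing to the one–variable function $\tilde F$, whose strict convexity $\tilde F''>0$ is the genuine input. The only place this reduction breaks down is the degenerate direction $\vec U=0$, and there the algebraic relation $U_0\ge\sum\bar W_{0m0n}^2$ is exactly what confines the minimizer to the cone vertex $(A,\vec C)=(1,0)$.
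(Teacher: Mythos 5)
Your proof is correct, but it takes a genuinely different route from the paper's. The paper eliminates the variables in the opposite order: it first fixes $\vec C$ and observes that the functional is increasing in $A$ (the derivative is $U_0 - c/(2A^2) > 0$ for $A\ge 1$, using $U_0\ge c$), which forces any minimizer onto the hyperboloid $O=\{A=\sqrt{1+|\vec C|^2}\}$; it then quotes the proof of Lemma 11.3 of the reference \cite{Chen-Wang-Yau1} to assert that the restriction to $O$ is a strictly convex function of $(C_1,C_2,C_3)$, which gives uniqueness in all cases at once. You instead fix $A$, use linearity in $\vec C$ to collapse the minimizer onto the sphere $|\vec C|=\sqrt{A^2-1}$ at the antipode of $\vec U$, and then run a one-variable strict convexity computation for $\tilde F(A)=AU_0-\sqrt{A^2-1}\,|\vec U|+c/(2A)$. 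What your route buys: it is completely self-contained and elementary (the strict convexity needed is a one-line second-derivative computation rather than a three-variable Hessian estimate imported from another paper, where one would still have to check that the argument survives the extra $c/(2A)$ term, which is \emph{concave} in $\vec C$ near the vertex of $O$ and only dominated because $U_0\ge c$); it also delivers existence of the minimizer along the way, which the paper establishes separately via properness before stating the lemma. What it costs: the reduction to the boundary sphere degenerates when $\vec U=0$, so you need the separate case analysis there (again via $U_0\ge c$), whereas the paper's convexity-on-$O$ argument treats both cases uniformly. Your explicit verification that every pair $(A,\vec C)$ with $A\ge\sqrt{1+|\vec C|^2}$ is actually realized by an observer Killing field (via $\vec B\parallel\vec C$, $\vec D=0$ in Proposition \ref{observer_constraint}) is a point the paper leaves implicit, and your remark that the joint functional is only affinely, not strictly, convex in $\vec C$ correctly identifies why a naive convexity argument cannot close the uniqueness claim.
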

\begin{proof}
From the remark after Proposition \ref{observer_constraint}, we have
\[  A \ge \sqrt{1+ |\vec{C}|^2} \ge 1. \]
Moreover, 
\[ E(\Sigma,Y(T_0),T_0)^{(5)} =  \frac{1}{90}  \Big[  Q(e_0,e_0,e_0,Ae_0 + C_i e_i)+ \frac{\sum_{m,n} \bar W_{0m0n}^2}{2A}\Big]  \]
where
\[  Q(e_0,e_0,e_0,Ae_0 + C_i e_i)= (\frac{1}{2}\sum_{k,m,n} \bar W_{0kmn}^2+\sum_{m,n} \bar W_{0m0n}^2 )A+ 2\sum_{m,n,i} \bar W_{0m0n} \bar W_{0min} C_i. \] 
Hence, fixing $\vec{C}$, $E(\Sigma,Y(T_0),T_0)^{(5)}$  is increasing in $A$ and the minimum can only occur on the set of observers, $O$, such that  
\[  A = \sqrt{1+ |\vec{C}|^2}. \]
From the proof of \cite[Lemma 11.3]{Chen-Wang-Yau1}, $E(\Sigma,Y(T_0),T_0)^{(5)}$ is a strictly convex function of $(C_1,C_2,C_3)$ on $O$. This finishes the proof of the lemma \end{proof}
As a result, we have the following theorem for the small sphere limit of the quasi-local energy in vacuum spacetimes with reference in the AdS space:
\begin{theorem} \label{thm_small_vac}
Let $\Sigma_r$ be the family of spheres approaching $p$ constructed in Section 4. 
\begin{enumerate}
\item For each observer $T_0$ in the AdS space, there is a pair $(Y_r(T_0),T_0)$ solving the leading order term of the optimal embedding equation of $\Sigma_r$ (see Lemma \ref{yi3} and \ref{y03}). For this pair $(Y_r(T_0),T_0)$, we have 
\[  
\begin{split}
     & \lim_{r \to 0} r^{-5} E(\Sigma_r, Y_r(T_0),T_0)= \frac{1}{90}  \Big[  Q(e_0,e_0,e_0,A e_0 + C_i e_i)+ \frac{\sum_{m,n} \bar W_{0m0n}^2}{2A}\Big].
\end{split}
\]

\item Suppose $Q(e_0,e_0,e_0,\cdot)$ is dual to a timelike vector. Let $\mathcal P$ denote the set of $(Y,T_0)$ admitting a power series expansion given in equation \eqref{assume}. We have 
\[  
\begin{split}
    &\inf_{(Y,T_0) \in \mathcal P} \lim_{r \to 0} r^{-5} E(\Sigma_r, Y,T_0) 
 =  \inf_{(A,C_i) \in \mathbb H^3} \frac{1}{90}  \Big[ Q(e_0,e_0,e_0,A e_0 + C_i e_i) + \frac{\sum_{m,n} \bar W_{0m0n}^2}{2A}\Big].
\end{split}
\]
where $\mathbb H^3$ denotes the set of unit timelike future directed vector in $\R^{3,1}$. The infimum is achieved by a unique $(A,C_i) \in \mathbb H^3$.
\end{enumerate}
\end{theorem}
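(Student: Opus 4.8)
The plan is to prove part (1) by assembling the $r\to 0$ limits of the three integrals into which $8\pi E(\Sigma_r,Y_r(T_0),T_0)$ was decomposed in \eqref{energy_expression}, and to prove part (2) by minimizing the resulting leading coefficient over observers. For part (1) I would take each limit separately. The energy component is Lemma \ref{lemmaenergy}; its term $A\int_{S^2}(h_0^{(3)}-h^{(3)})\,dS^2$ is reduced further by Proposition \ref{pe2} to a nonnegative squared-second-fundamental-form integral plus the curvature integrals $\int(k^{(3)}-h^{(3)}-\tfrac14)$, $\int W_0^2$ and $\int P_iP_j$. The reference Hamiltonian limit is Proposition \ref{pv0} and the physical Hamiltonian limit is Proposition \ref{pvr}. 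Adding these three is then the whole content of the leading order of $E$.

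Having summed the three pieces, I would substitute the explicit expressions \eqref{W_P} and \eqref{R_S} for $W_0,W_i,P_k,R_{ij},S_j$ in terms of the Weyl components $\bar W_{0i0j},\bar W_{0kij},\bar W_{0kmn}$, and evaluate every $S^2$-integral using the orthogonality and parity of the first eigenfunctions $\tilde X^i$ and of the $(-6)$- and $(-12)$-eigenfunctions $W_i,P_k$. The $A$-linear and $C_i$-linear pieces should collect into the Bel--Robinson contraction $Q(e_0,e_0,e_0,Ae_0+C_ie_i)$, while the $\tfrac1A$-pieces coming from $\int P_iP_j$, $\int \tilde X^iW_0P_j$ and $\int W_0^2\tilde X^i\tilde X^j$ should combine into $\tfrac{\sum_{m,n}\bar W_{0m0n}^2}{2A}$. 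This last bookkeeping is precisely the computation organized in Section 10 of \cite{Chen-Wang-Yau2}, which I would invoke to conclude part (1).

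For part (2), since $Y_r(T_0)$ solves the leading-order optimal embedding equation, minimizing over $\mathcal P$ amounts at leading order to minimizing $E^{(5)}(A,\vec C):=\lim_{r\to0}r^{-5}E(\Sigma_r,Y_r(T_0),T_0)=\tfrac{1}{90}\big[Q(e_0,e_0,e_0,Ae_0+C_ie_i)+\tfrac{\sum_{m,n}\bar W_{0m0n}^2}{2A}\big]$ over observers, whose $(A,C_i)$-linear part is the pairing with the vector $U$ of the text; the hypothesis that $Q(e_0,e_0,e_0,\cdot)$ is dual to a timelike vector is exactly timelikeness of $U$. By Proposition \ref{observer_constraint} and \eqref{obser_inequlaity} the admissible $(A,\vec C)$ satisfy $A\ge\sqrt{1+|\vec C|^2}\ge1$. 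Writing $c_3=\sum_{m,n}\bar W_{0m0n}^2$ and $c_1=\tfrac12\sum_{k,m,n}\bar W_{0kmn}^2+c_3$, one computes $\partial_A E^{(5)}=\tfrac{1}{90}\big(c_1-\tfrac{c_3}{2A^2}\big)$; since $c_1\ge c_3$ and $A\ge1$ give $\tfrac{c_3}{2A^2}\le\tfrac{c_3}{2}\le\tfrac{c_1}{2}<c_1$ (strictly, as $c_1>0$ when $U$ is timelike), $E^{(5)}$ is strictly increasing in $A$ for fixed $\vec C$. Hence the minimum lies on the hyperboloid $\{A=\sqrt{1+|\vec C|^2}\}\cong\mathbb H^3$, where I would establish strict convexity of $E^{(5)}$ in $(C_1,C_2,C_3)$ following \cite[Lemma 11.3]{Chen-Wang-Yau1}; combined with properness under the same hypothesis this yields existence and uniqueness of the minimizer. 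This is exactly Lemma \ref{minimize_direction}.

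The hard part is the recognition step in part (1). The four source results each carry several curvature terms with different $A$- and $C_i$-dependence, and it is not evident a priori that their sum collapses, after substituting the Weyl decomposition and integrating out the spherical harmonics, into the single clean expression $\tfrac{1}{90}\big[Q(e_0,e_0,e_0,Ae_0+C_ie_i)+\tfrac{\sum_{m,n}\bar W_{0m0n}^2}{2A}\big]$. The cancellations that produce the Bel--Robinson contraction together with the lone $\tfrac1A$ correction are delicate, which is why the detailed bookkeeping is deferred to the parallel computation in Section 10 of \cite{Chen-Wang-Yau2}.
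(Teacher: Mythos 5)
Your proposal is correct and follows essentially the same route as the paper: part (1) is obtained by summing Lemma \ref{lemmaenergy}, Proposition \ref{pe2}, Proposition \ref{pv0} and Proposition \ref{pvr} and deferring the Bel--Robinson bookkeeping to Section 10 of \cite{Chen-Wang-Yau2}, and part (2) is the paper's argument of properness for existence plus Lemma \ref{minimize_direction} (monotonicity in $A$ forcing the minimum onto the hyperboloid $A=\sqrt{1+|\vec C|^2}$, then strict convexity via \cite[Lemma 11.3]{Chen-Wang-Yau1}). Your explicit computation of $\partial_A E^{(5)}$ merely fills in the monotonicity step the paper asserts without calculation, and your identification of the timelikeness of $U$ with the hypothesis on $Q(e_0,e_0,e_0,\cdot)$ matches the paper's.
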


\end{document}